\newtheorem{thm}{Theorem}[section]
\newtheorem{prop}{Proposition}[section]
\newtheorem{lem}[thm]{Lemma}
\theoremstyle{definition} 
\newtheorem{defn}{Definition}[section]
\theoremstyle{remark} 
\newtheorem*{rem}{Remark}
\theoremstyle{remark} 
\newtheorem*{exmp}{Example}
\begin{document}

\title{\textbf{\large{On the geometry of two state models for the colored Jones polynomial}}}

\maketitle

\makeatletter 
\makeatother

\centerline{\author{Uwe Kaiser, \textit{Boise State University}, ukaiser@boisestate.edu}}

\centerline{\author{Rama Mishra, \textit{IISER Pune},r.mishra@iiserpune.ac}}

\begin{abstract}
Using the flow property of the R-matrix defining the colored Jones polynomial, we establish a natural bijection between the set of states on the \textit{part arc-graph} of a link projection and the set of states on a corresponding bichromatic digraph, called \textit{arc-graph}, as defined by Garoufalidis and Loebl \cite{GL}. We use this to give a new and essentially elementary proof for the knot state-sum formula in \cite{GL}.  We will show that the state-sum contributions of states on the part arc-graph defined by the universal $R$-matrix of $U_q(\textrm{sl}(2,\mathbb{C}))$ correspond, under our bijection of sets of states, to the contributions in \cite{GL}. This will show that the two state models are in fact not essentially distinct. Our approach will also extend the formula of Garoufalidis and Loebl to links. This requires some additional non-trivial observations concerning the geometry of states on the part arc-graphs. We will discuss in detail the computation of the arc-graph state-sum, in particular for $3$-braid closures.
\end{abstract}

\vskip 0.2in

\section{Introduction} 

This article is the result of a long-going effort to understand a state-sum formula of Garoufalidis and Loebl for the colored Jones polynomial of a knot. This formula was proven in \cite{GL} from the state-sum formula for the classical Jones polynomial and a cabling formula, which had been observed in \cite{Li}. From the proof given in \cite{GL} it is not obvious to us that the resulting state model is the \textit{same} as the one defined directly from the $R$-matrix of $U_q(\textrm{sl}(2,\mathbb{C}))$-representations, i.\ e.\ from the corresponding ribbon functor \cite{RT},\cite{KM}, \cite{G} (Definition 2.2 will explain what is meant by \textit{same} here).

Let $K$ be a framed oriented link in $\mathbb{R}^3$ with projection $\mathcal{K}$ (blackboard framing). 
Let $P\mathcal{K}$ denote the corresponding \textit{part arc-graph}. This is a $4$-valent planar digraph with vertices labeled by the crossing signs of the link projection.

Quantum link invariants, defined for a quantum group of a simple Lie-algebra and a finite dimensional representation, can be calculated as state-sums over $P\mathcal{K}$, \cite{J}, \cite{RT} and \cite{KiM}. In the case of the quantum group corresponding to $\textrm{sl}(2,\mathbb{C})$ and the irreducible $(n+1)$-dimensional representation, $n\geq 1$, Garoufalidis and Loebl \cite{GL} define a bichromatic digraph $G\mathcal{K}$ and calculate a state-sum for the colored Jones polynomial as a state-sum on this graph. In \cite{GL} the state-sum formula is derived from the formula for the classical Jones-polynomial ($n=1$) and cabling. This is based on the representation theoretic properties. We will show that the two state models are isomorphic to each other in a well-defined sense, even though the state models are defined on different graphs. 

Garoufalidis and Loebl also use their formula to define a non-commutative (this means that the sum is calculated using non-commutative algebra, not that the actual sum is non-commutative in some way) state-sum for the colored Jones polynomial \cite{GL}. Independently Huynh and Le \cite{HL}, see also \cite{A}, defined a non-commutative formula directly from the $R$-matrix defined by Rozansky \cite{R}. There is recent work on the noncommutative approach, see \cite{B} and \cite{H}. 
The relation between those two formulas is briefly mentioned in \cite{HL}, Remark 0.1, but a detailed discussion of the relation of this is still missing in the literature. The results of our article indicate that the two noncommutative state models are also equivalent if there is a natural state-sum defined for the noncommutative formula of \cite{GL}.
The states of \cite{HL} are described in \cite{A} in terms of walks on braids, with state-sum contributions defined by the actions of sequences of operators on a certain ring of polynomials defined from the walks. Algebraically the walks result from computing the expansion of a quantum determinant of a matrix over a ring of operators defined through an analogy to the Burau representation, see \cite{A} for more details. Here walks are defined by sequences of overcrossings, undercrossings and jump-down transitions. 

In the literature there are often restrictions of formulas for the colored Jones polynomial to the case of \textit{knots}. This often seems in order to avoid a discussion of which states are \textit{contributing} (see Definition 2.1). We think that this is in fact an interesting question, in particular when it comes to a discussion of computational complexity. 
We will show that the formula \cite{GL} based on the arc graph of knots extends to links. 

In sections 2 and 3 we define state models and how to compare them.  We define the sets of decorated graphs we will be using. In section 4 we define the bijections of states, which is used in the following sections to show that the usual state model defined from the ribbon functor ($R$-matrix) is equivalent to the state model in \cite{GL}. In section 5 we review the ribbon function definition, and in section 6 we give a direct proof of the state-sum formula \cite{GL}, which also shows the equivalence of the models and extends the formula to link projections. In section 7 we show how the states in both cases are determined by potentials, a notion introduced in section 4, and we show the contributing states are parametrized. Our approach shows that the state-sums are determined without using any order of the vertex set of the part arc-graph or arc-graph. In section 8 we we discuss the computation when we choose the cyclic order of vertices, defined by running along components from chosen base points. In section 9 we explicitly give the evaluation of the colored Jones polynomial for all weaving links on three strings from the \cite{GL}. In section 10 we show how the state-sum of 
is computed algorithmically from the braid word for all $-$-strand braid closures. We give explicit examples of state-sum computations throughout the article. 

\vskip 0.1in

\section{State models}

We will consider \textit{vertex models} as defined in \cite{J}, see also \cite{T}.

\begin{defn} Let $\mathfrak{G}$ be a set of (possibly in some way decorated) finite graphs and let $\mathcal{R}$ be an integral domain (usually $\mathcal{R}=\mathbb{Z}[t^{\pm \frac{1}{D}}]$ for some natural number $D$).
A  \textit{state model} on $\mathfrak{G}$ assigns to each graph $G=(V,E)\in \mathfrak{G}$ ($V$ is the vertex set, $E$ is the edge set) a \textit{state-sum}
\begin{equation*}
Z(G)=\sum_{f\in \textrm{St}(G)}Z(f)=\sum_{f\in \textrm{st}(G)}Z(f)\in \mathcal{R}
\end{equation*}
with $Z(f)=\Delta \cdot \beta (f)\in \mathcal{R}$. The factor $\Delta \neq 0$ is a \textit{global} contribution defined from the \textit{geometry} and possible decorations of the graph but does not depend on the coloring. 
The set $\textrm{St}(G)\subset \{f: E\rightarrow \mathbb{Z}\}$ ($\mathbb{Z}$ could be replaced by any countable set) is a set of \textit{edge-colorings}, the states of the state model. The factors $\beta (f):=\prod_{v\in V}\beta_v(f)\in \mathcal{R}$, are defined \textit{locally}. This means that $\beta_v(f)$ depends only on edge-colorings of edges incident with $v$ or with adjacent vertices. The set of \textit{contributing states} or \textit{admissible states} (see \cite{T})
$\textrm{st}(G):=\{f\in \textrm{St}|\beta (f)\neq 0\}\subset \textrm{St}(G)$ is always finite so that $Z(G)$ is defined. The condition $f\in \textrm{st}(G)$ is determined by local conditions on $f$, conditions on the colorings at vertices. Note that $\beta (f)\neq 0 \Longleftrightarrow \beta_v(f)\neq 0 \ \textrm{for all}\  v$. 
\end{defn}

\begin{rem} (a) Decoration on a graph includes direction, signs or numbers attached to vertices and/or edges. Those decorations are not to be confused with the edge-colorings.

\noindent (b) The above definition focuses on vertex contributions and is adapted to state sums on braid projections. In the link projection case, additionally colorings at extrema are included in the definition of $\beta (f)$. This is necessary so that the state sums define isotopy invariants. \end{rem}

\begin{defn} \label{defn: equivalency}
Let $\mathfrak{G}_1,\mathfrak{G}_2$ be two sets of finite decorated graphs and corresponding state models 
$Z_1,Z_2$. Let
$\Phi: \mathfrak{G}_1\rightarrow \mathfrak{G}_2$. Suppose that for each $G\in \mathfrak{G}_1$ there  
is a surjective map of corresponding state sets $\phi_G: \textrm{St}_1(G)\rightarrow \textrm{St}_2(\Phi (G))$ such that for each $f\in \textrm{St}_2(\Phi(G))$ we have
$$Z_2(f)=\sum_{g\in \textrm{st}_1(G): \phi _G(g)=f}Z_1(g)$$
Then we will say that the state model $Z_1$ \textit{dominates} the state model $Z_2$ \textit{over} $(\Phi ,\phi =(\phi_G)) $. If all maps $\phi_G$ are all bijections (so in particular restrict to bijections $\textrm{st}_1(G)\rightarrow \textrm{st}_2(\Phi (G))$
then we call the two state models \textit{equivalent over} $\Phi $.
 \end{defn}

\begin{rem} Equivalence of the state-sum models can be stronger than equality of the state-sums in $\mathcal{R}$. Note that it follows that the sets of admissible states for a graph $G$ and its image $\Phi (G)$ correspond to each other under the bijection $\phi _G$. \end{rem}

\begin{defn} \label{defn:edge coloring} An edge-coloring on a digraph $G$ is a $\mathbb{Z}$-\textit{flow} if 
$$\sum_{e\ \textrm{ingoing}\ v}f(e)=\sum_{e\ \textrm{outgoing}\ v}f(e)=:f(v).$$
Let $\mathcal{F}(G)$ denote the set of all $\mathbb{Z}$-flows on $G$.
It is called a \textit{flow} if the image of $f$ is in $\mathbb{N}_0:=\{0,1,2,,\ldots \}$.
An $n$-\textit{flow} is a flow with $f(v)\leq n$ for all vertices $v$, i.\ e.\ $f(v)\in \{0,1,\ldots ,n\}=:\underline{n}$.
The set of flows respectively $n$-flows on a digraph $G$ is denoted by $\mathcal{F}(G)$ respectively $\mathcal{F}_n(G)$.
Let $\mathcal{F}^n(G)$ denote the set of flows $f$ on $G$ with $f(e)\in \underline{n}$ for all $n$, the $n$-\textit{bounded} flows. Note: $\mathcal{F}(G)\supset \mathcal{F}^n(G)\supset \mathcal{F}_n(G)$. \end{defn}

\begin{exmp} The flow set $\mathcal{F}_1(G)$ is in bijective correspondence with the set of unions of vertex-disjoint directed cycles of $G$. \end{exmp}

\vskip 0.2in

\section{Graphs and colorings for state-sums of the colored Jones polynomial}

\vskip 0.1in

We will first consider \textit{part arc-graphs} and \textit{arc-graphs} following \cite{GL}, which are defined from Morse projections $\mathcal{K}$. We will also introduce a third type of graph representation (and the corresponding colorings), which we call chord graphs, closely related to the chord diagrams in Vassiliev theory. 

In the following sections we will restrict to closed braid diagrams.

A \textit{Morse projection} $\mathcal{K}$ is a smooth regular embedding of an oriented link in $\mathbb{R}^2\times [0,1]$, which projects to a smooth immersion in the plane $\mathbb{R}^2$, and such that the composition with the projection onto the $y$-axis defines a Morse function on the components of $\mathcal{K}$. Note that each regular projection can be approximated with a Morse projection by planar isotopy. Sometimes we will identify $\mathcal{K}$ just with the embedded circles. 

The image of the immersion defined by $\mathcal{K}$ in $\mathbb{R}^2$ is the part arc-graph $P\mathcal{K}$ and is a four-valent planar directed graph, with signs $\pm $ assigned to each vertex. 
We will color part arc-graphs by $n$-bounded flows according to Definition \ref{defn:edge coloring} such that at all crossings the colors are increasing (respectively decreasing) along the overcrossing arc in the direction of the link orientation. Because of the flow condition, correspondingly the colors are decreasing (respectively increasing) along the undercrossing arc. This defines subsets $\mathcal{F}^n_{\pm }(P\mathcal{K})$ 
of $\mathcal{F}^n(\mathcal{P}\mathcal{K})$. The corresponding states are called $(+)$-colorings (respectively $(-)$-colorings).

Next we enhance the notion of arc graph as defined in \cite{GL} by adding additional decoration. Again we assume that the Morse projection $\mathcal{K}$ of a link is given as above. 

\vskip 0.1in

\begin{center}
\includegraphics[width=0.8\textwidth]{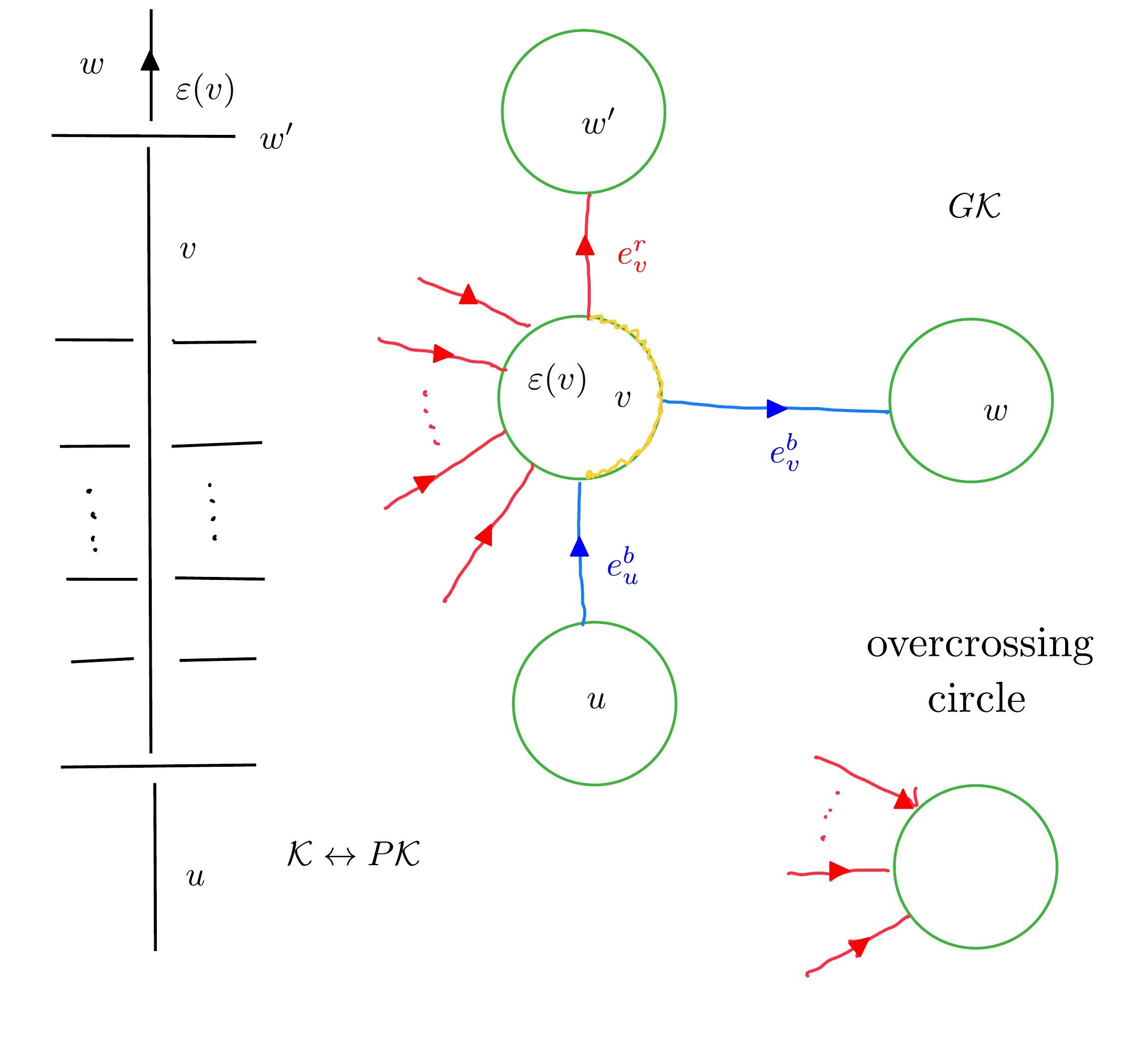}
\end{center}

\vskip -0.5in

\centerline{\textit{Figure 1}}

\vskip 0.1in

\begin{defn} \label{defn:arc-graph} Let $G\mathcal{K}=G_+\mathcal{K}$ be the bichromatic \textit{labeled} fat digraph with (fat) vertices corresponding to overcrossing-arcs of $\mathcal{K}$. The edges of the graph are \textit{decorated} by colors blue and red. 
The directed blue edge $e_v^b$ from a vertex $v$ to the vertex $w$ corresponds to the transition at the undercrossing at which we switch from the overcrossing arc ending at vertex $v$ to the next overcrossing arc ending at $w$. The red arc $e_v^r$ corresponds to the \textit{jump up} transition at the end of arc $v$, as  in the picture to the overcrossing arc (or circle) $w'$.  The vertices corresponding to overcrossing arcs of $G\mathcal{K}$ are decorated by the signs of the vertices of $\mathcal{K}$ at which the corresponding overcrossing arc ends. 
Note that the vertices corresponding to overcrossing circles are not decorated by a sign. 
The boundary of each fat vertex is also decorated by an orientation. For a vertex corresponding to an arc this is from 
the incoming blue edge in the direction of the outgoing red edge. 

There will be additional decorations $\pm $, called \textit{rotation numbers} along the boundaries of fat vertices, which indicate when part arcs of $P\mathcal{K}$ run through a right oriented maximum respectively minimum. 
(We explain below in more detail how the part arcs on the boundaries of fat vertices are identified with the edges of $P\mathcal{K}$.) 
If a part arc runs through a sequence of right oriented extrema the contributions from maxima and minima will cancel, so that each part arc will carry at most one $\pm $. 

The order of incoming red and blue arcs at a fat vertex is according to the corresponding order of overcrossings in $\mathcal{K}$. We call this enhanced graph the \textit{arc graph} for the given projection $\mathcal{K}$. In the setting of \cite{GL} no preferred ordering between the outgoing edges is given. But, our correspondence between states will suggest that the outgoing red edge precedes the outgoing blue edge. 
There is a graph defined from undercrossing arcs in the same way, denoted $G_-\mathcal{K}$.
\end{defn}

\begin{rem} (a) There is exactly one red edge $e_v^r$ and one blue edge $e_v^b$ out of each vertex $v$ of $G\mathcal{K}$ corresponding to an overcrossing arc. For an overcrossing \textit{circle} there are no 
outgoing edges and there is no incoming blue edge. Depending on possible overcrossings, there will be an even number of incoming red edges. A special case of an overcrossing arc is the projection of a figure eight. In this case there are one blue edge and one red edge incident with this vertex, which are both outgoing and ingoing. There can be additional incoming red edges corresponding to the figure eight component overcrossing other components of $\mathcal{K}$. We will usually not consider projections of this form because we could isotope into a \textit{split} projection. 

\vskip 0.1in

\noindent (b) We divide the boundary of each fat vertex corresponding to an overcrossing arc into an \textit{active half-circle}, starting at the incoming blue edge and ending at the outgoing red edge. The complementary half-circle 
is the \textit{passive} half-circle, and is colored yellow in Figure 1. The boundary of the fat vertex corresponding to an overcrossing circle will be considered active. Note that the part-arcs of the active part of the boundaries of fat vertices of $G\mathcal{K}$ exactly correspond to the part-arcs of $P\mathcal{K}$. 

\vskip 0.1in

\noindent (c) Suppose that a Morse projection $\mathcal{K}$ of a link is equipped with
a preferred part arc of the corresponding part arc graph $P\mathcal{K}$ as in Figure 2, i.\ e.\ the only part of the projection outside of the box is the right maxima arc.

\vskip 0.1in

\begin{center}
\includegraphics[width=0.6\textwidth]{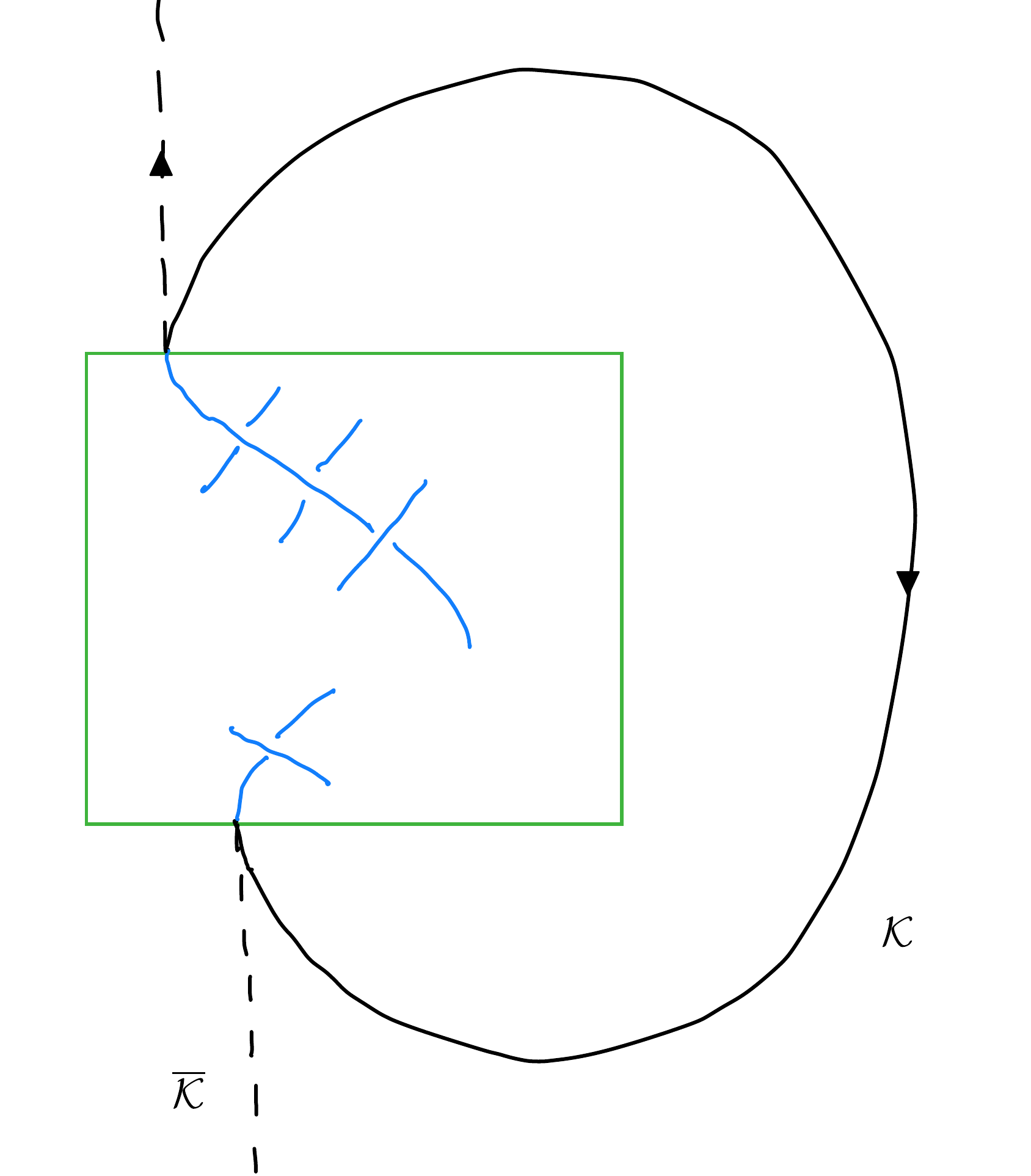}
\end{center}

\centerline{\textit{Figure 2}}

\vskip 0.1in

In this case we can open the projection naturally to include a long component, and consider this \textit{long component} projection $\overline{\mathcal{K}}$ (the dashed half arcs replace the semi-circle). Usually we will omit the dash in notation and denote by $\mathcal{K}$ also the Morse projection of a long component projection. The corresponding part arc-graph is defined from the new projection just as in the closed case but the special arc is now cut into the two oriented half arcs. Note that a long component Morse projection closes 
uniquely to a projection by closing long component with a right oriented maximum. The two processes are inverse to each other.
Suppose that the long component is not an overcrossing arc and the crossing adjacent to the bottom half arc is an undercrossing at the end of the half arc as in Figure 2. We will usually not introduce a different notation for long component projections.
In this case we associate to the long component projection the \textit{reduced arc-graph}, see the discussion in section 7 below.

\vskip 0.1in

\noindent (d) A Morse projection $\mathcal{K}$ is alternating if and only if each fat vertex of $G\mathcal{K}$ has exactly one incoming red edge, or no incoming red edge for a circle arc without crossings. 

\vskip 0.1in

\noindent (e) Note that $P\mathcal{K}$ is a planar graph while $G\mathcal{K}$ is an abstract graph. Not every bichromatic enhanced graph as defined above is the arc graph of a Morse projection. This is due to the missing horizontal compatibility of red edges in the graph. Below we define chord graphs, which will include this information. It seems interesting to extend arc graphs to the setting of virtual projections by adding undirected red edges corresponding to virtual crossings. 

\end{rem}

\begin{defn} A coloring of $G\mathcal{K}$ is a coloring of all blue and red edges. Moreover, for each fat vertex corresponding to an overcrossing circle there will be a color assigned to a part-arc of the boundary, if there are incoming red edges, or to the whole boundary circle otherwise. The coloring will be $n$-bounded at each vertex corresponding to an overcrossing arc, and it will satisfy that the sum of all incoming red edge colors is $0$. Note that if we run through a component of $\mathcal{K}$ we run through the active edges 
of $G\mathcal{K}$ (we will jump across blue edges whenever we switch from one overcrossing arc to the next one), thereby traversing a connected component of $G\mathcal{K}$. While traversing the active boundaries of the corresponding fat vertices following the orientations we cross red incoming edges. We require that the sum of the colors of all incoming red edges minus the sum of the colors of all outgoing red edges adds to zero for each component. We require additionally that the $n$-flow condition is met at each vertex corresponding to an arc. 
See the definition of potential below to understand these conditions. The set of all those colorings of $G\mathcal{K}$ is denoted $\mathcal{F}_n(G\mathcal{K})$. This is a subset of the set of functions $f: E\rightarrow \underline{n}$, except if $G$ contains 
vertices corresponding to overcrossing circles. In this case $f$ is defined on the union of the set of all red and blue edges and a part arc in the boundary of the fat vertex or the whole boundary of the fat vertex as discussed above. 
\end{defn}

\vskip 0.1in

Let $\mathcal{P}\mathcal{K}$ the part arc-graph of a Morse projection $\mathcal{K}$ of a link $K$ with $\mu $ components.  Let $V$ be the set of vertices corresponding to crossings of distinct components of $P\mathcal{K}$, and $V=\sqcup _{1\leq j\leq k\leq \ell} V_{jk}$ with $V_{jk}$ corresponding to the set of crossings of the $j$-th and $k$-th component, $V_{jk}=V_{kj}$.
In the following we will assume that the components of $\mathcal{K}$ are ordered and a part arc is chosen on each component, which is not an overcrossing circle. We call this a \textit{basing}. The chosen part arc will usually be the first part arc on an overcrossing arc but exceptions are possible. Note that when considering braid projections there is a natural way to choose the basing. For long component projections the long component will always be chosen as the first component and the part arc 
can be chosen by one of the two long half-arcs. We call a projection with ordering and basing of components \textit{based}. 

\begin{defn} \label{defn=potential}  A \textit{potential} on a based projection $\mathcal{K}$ is a pair of functions $(r,\beta )$ with $r: V\rightarrow \mathbb{N}_0$ and $\beta :\{2,\ldots ,\mu \}\rightarrow \mathbb{N}_0$. 
Here $V$ is the vertex set of the part arc-graph. We call $\beta_i$ the \textit{base values} of the potential, and the function $r$ defines the 
\textit{jump values}.
For $\mu >1$ the following relations are required for $1\leq i\leq \mu$:
$$\sum_{ i\neq j\\ ,c\in V_{i,j}}\lambda_i(c)r(c)=0$$
We call these relations \textit{cycle relations}. Here $\lambda_i(c)=\pm 1$ depending on whether while walking along the $i$-th component the crossing $c$ is an overcrossing respectively an undercrossing. 
We will sometimes work with $\mathbb{Z}$-potentials defined as above but with $\beta ,r$ possibly taking values in $\mathbb{Z}$.  
\end{defn}

\begin{rem}
\noindent (a) It is important to note that the cycle relations are not independent, e.\ g.\ the relation for $i=1$ is the negative of the sum of the relations for $2\leq j\leq \mu -1$. This follows because each $r(c)$ appears in exactly two relations but with different signs. So if we sum relations for $1\leq i\leq \mu$ then the result is $0$. 

\noindent (b) For a long component projection we will assume that the first component is long component. Moreover, the part arc on the first component chosen 

\noindent (c) A braid induces framing, orientation, ordering  and basing (choice of base-point) of the components of the link. The base-point on a component is determined by the part-arc at the bottom of the link on the first arc on the component from the left.
\end{rem}

Each potential determines an edge-coloring of $P\mathcal{K}$ in $\mathbb{Z}$ as follows: Use the function $\beta$ to define the coloring of part-arcs at basings for all components except the first one. For the left-bottom part-arc on the first component define the color to be $0$. Then, along each component, at crossing $c$ change the edge-coloring according to the diagram below. This means adding the value $r(c)$ along an overcrossing and subtracting along an undercrossing (flow condition). We think of the potential as giving the derivative and base values for the coloring of $P\mathcal{K}$. 
A \textit{contributing potential} is a potential for which all resulting edge-colorings take values in $\mathbb{N}_0$. 
We use this language to indicate that this means that the state will be contributing to the state-sum of the $n$-colored Jones polynomial for a large value of $n$.
The potential is $n$-contributing (usually we just say only contributing if $n$ is clear from the context) if 
all resulting edge colors are in $\underline{n}$. This implies that $r,\beta $ have to take values in $\underline{n}$.
It follows from the formulas for state-sum contributions discussed in sections 5 and 6
that $n$-contributing states are exactly those which will be contributing to the state-sum of the $n$-th colored Jones polynomial.  
Of course, restricting the values of $r,\beta $ to $\underline{n}$ does not necessarily define $n$-bounded flows. But any negative edge coloring will result in non-contributing states. This will follow from the definitions of quantum binomials and Pochhammer symbols in section 5. See Definition \ref{defn:edge coloring} above. We will discuss the inequalities defining contributing states in section 7.
 
The cycle conditions guarantee that returning to the basepoints along each component means returning to the starting color $\beta (j)=:\beta_j, \beta_0:=0$ along each component. Note that starting from the base-points each self-crossing is traversed twice and the color changes cancel. 

\begin{defn} Each Morse projection $\mathcal{K}$, which is equipped with an ordering of the components and a choice of base point on each component, defines a \textit{chord graph} as follows: The chord graph $C\mathcal{K}$ of $\mathcal{K}$ is given by $\mu $ ordered, based and oriented circles. Each crossing is represented by an oriented chord. If the orientation of the circle followed by the orientation of an incoming chord at a point on a circle is the positive orientation of the plane then proceeding along the circle in the direction of its orientation at this point is an overcrossing of the correspond arc on the circle, correspondingly at outgoing chords the arcs along circles correspond to undercrossing arcs of $\mathcal{K}$. 
The sign of the crossing is added as $\pm $ decorating the chords by pairs $(r,\varepsilon )$. Sometimes we do not need this additional decoration. If we want to emphasize the crossing signs we also talk about \textit{signed} chord graphs. 

A coloring of a chord graph is given by adding the data from a potential to the chords, and basepoints on the circles ordered $2,\ldots ,\mu$.  
\end{defn}

The chords subdivide the circles into arcs corresponding to the edges of $\mathcal{K}$. A potential on $\mathcal{K}$ induces a coloring of the chord graph. We will usually work with $\mathbb{Z}$-chord graph colorings, so allow functions $r,\beta $ to take values in $\mathbb{Z}$. Note that changing the orientation of a chord and the sign of the color of the chord will not change the corresponding part-arc coloring on $\mathcal{K}$. By definition, a chord graph is the graph of an abstract collection of $\mu$ circles and chords, which is not necessarily induced from the chord graph on a part arc-graph $\mathcal{K}$. But we assume that all chord graph colorings satisfy the cycle conditions. Note that for each chord graph induced from a projection $\mathcal{K}$ the number of chords between different circles is even, while we do not require this for abstract chord graphs. 

If $\mathcal{K}$ is defined from a closed braid projection then we will furthermore assume that chords \textit{respect} the \textit{horizontal} positions induced from a braid in the following way: For this imagine that we label the strings of the braid $1,2,\ldots , \mathfrak{s}$. Then we can assign numbers between $1$ and $\mathfrak{s}$ to each part-arc represented by a segment between chords on a circle of the chord graph. Here we assume that the part-arcs run vertically upwards along the braid except at a crossing represented by a braid generator $\sigma_i$ corresponding to the permutation $(i,i+1)$ and the incoming left bottom part-arc will get an additional horizontal position label $i$. Note that crossing an \textit{incoming} chord (means that the orientation of the chord is positive) with positive sign means that the horizontal position increases. The same will be true for an outgoing chord carrying a negative sign.  
The point here is that in order to be induced from a braid the horizontal positions have to match as in the following picture. Note that the two vertical arcs represent arcs on component circles but can also be arcs on the same circle. This is just the representation of the braid generator $\sigma_k^{\varepsilon }$. Compare with the discussion of \textit{special} Morse projections as discussed following Theorem \ref{thm:main}.

\begin{center}
\includegraphics[width=0.45\textwidth]{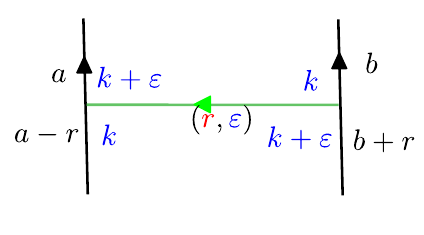}
\end{center}

\vskip -0.2in

\centerline{\textit{Figure 3}}

\vskip 0.1in
  
\noindent Note that switching sign and orientation of the chord and replacing $r$ by $-r$ simultaneously, we get both the same jumps in colors and horizontal positions on the two circle segments. Using negative chord colors $r$ will be essential for some of our arguments. 
  
\begin{exmp} Below is the braid (long) projection of $\sigma_1\sigma_2^{-1}\sigma_1$, which is a $2$-component link. The cycle relation is $r_2=r_3$. For contributing states we further have to restrict to $r_1\leq r_3\leq \beta_2$. The chord graph gives a very transparent way of recording the coloring of all part-arcs. Note that we use the overcrossing arc ordering of the vertices and corresponding values for $r(i)=r_i$. In this case we placed the base color on the part arc following the first undercrossing from the bottom on the second component. Where the base values are placed is not important for part arc graph or chord graph. But since we color blue edges of the arc graph we usually place base values on the specific part arcs leaving the crossing on the undercrossing.

\begin{center}
\includegraphics[width=0.95\textwidth]{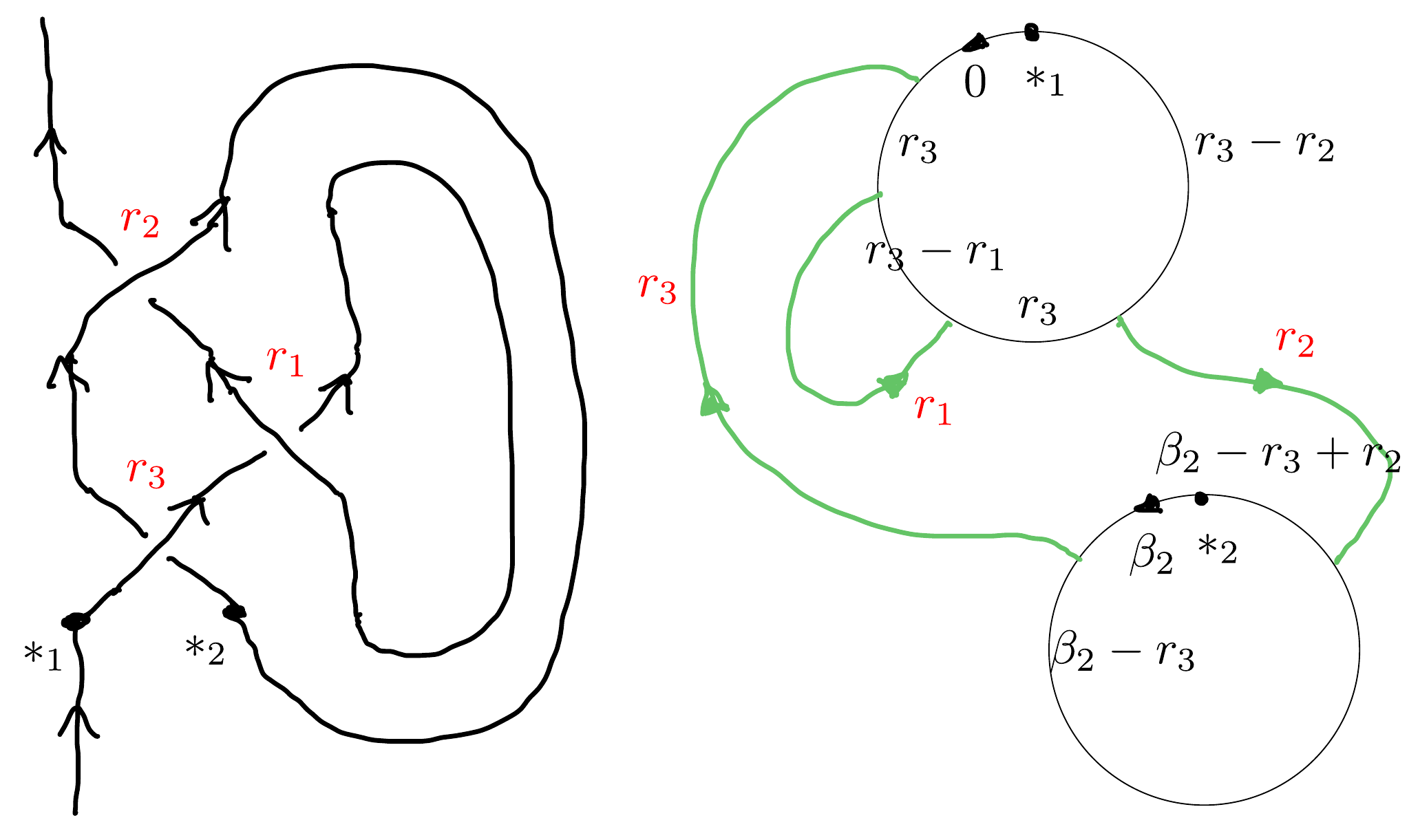}
\end{center}

\vskip -0.2in

\centerline{\textit{Figure 4}}

\vskip 0.1in

\noindent Figure 5 below shows the corresponding arc graph as defined in section 2. Note that ${\color{green}*}$ are labeling the part-arcs, which run downwards in the braid closure. Note that the the $f(e_3^b)=\beta_2-r_3$ is not a base value. 

\vskip 0.1in

\begin{center}
\includegraphics[width=0.75\textwidth]{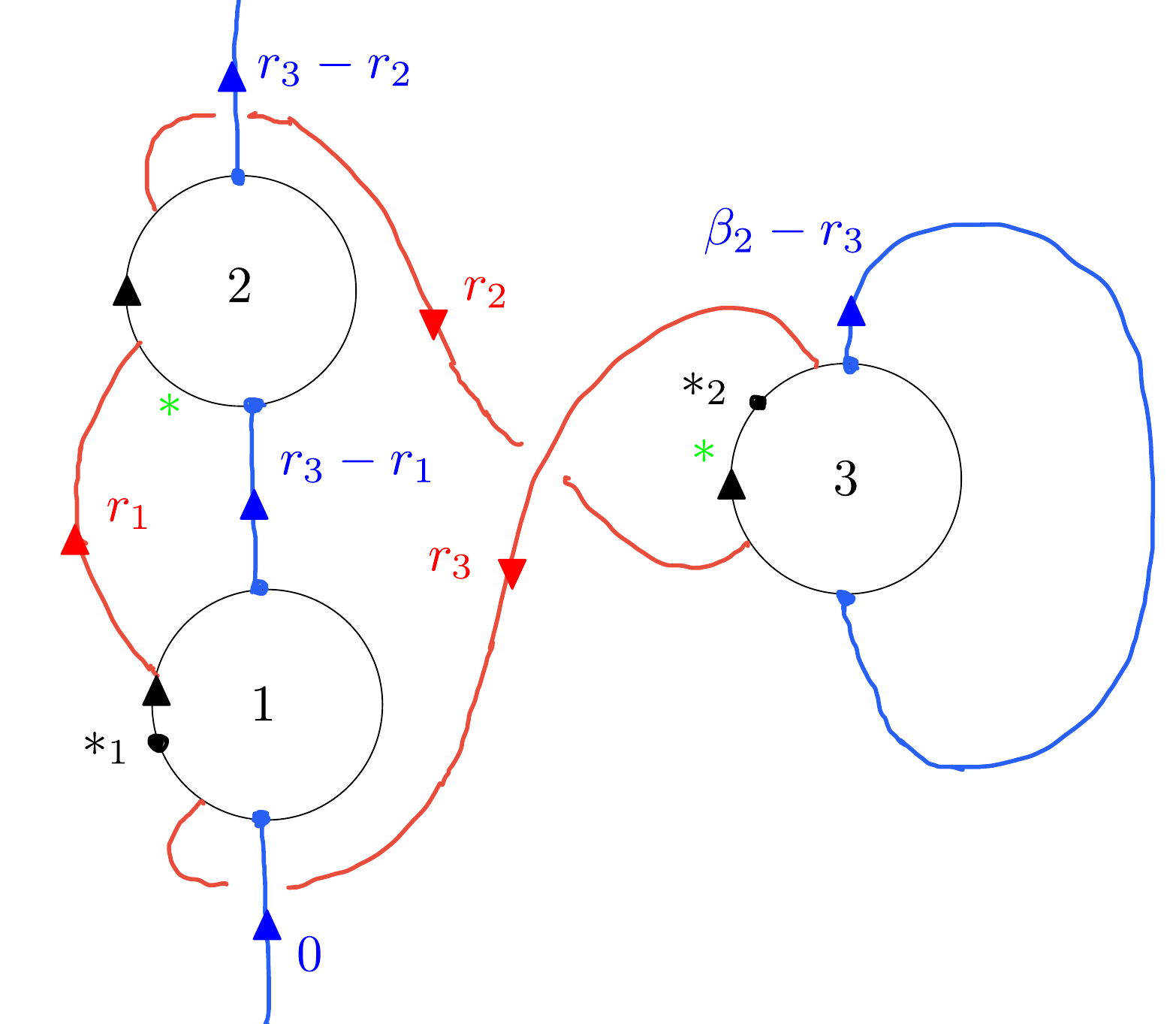}
\end{center}

\vskip -0.2in

\centerline{\textit{Figure 5}}

\vskip 0.1in

\end{exmp}

\noindent From the colored arc-graph the corresponding Garoufalidis-Loebl state-sum contribution will be determined without reference to the part arc-graph, see section 6. 

\vskip 0.1in

\begin{rem} For each Morse projection $\mathcal{K}$, $P\mathcal{K}$ determines $C\mathcal{K}$, from which $G\mathcal{K}$ can be deduced as follows. Just cut the fat vertices from disks bounding the circles of the chord graph by introducing cut arcs (see Figure 6) following each outgoing chord. The blue edges then are easily added like in the picture. Note that the cut arcs added correspond to the passive half arcs in the boundaries of fat vertices. Also colorings of $C\mathcal{K}$ induce the colorings on $G\mathcal{K}$.  

\vskip 0.1in

\begin{center}
\includegraphics[width=0.7\textwidth]{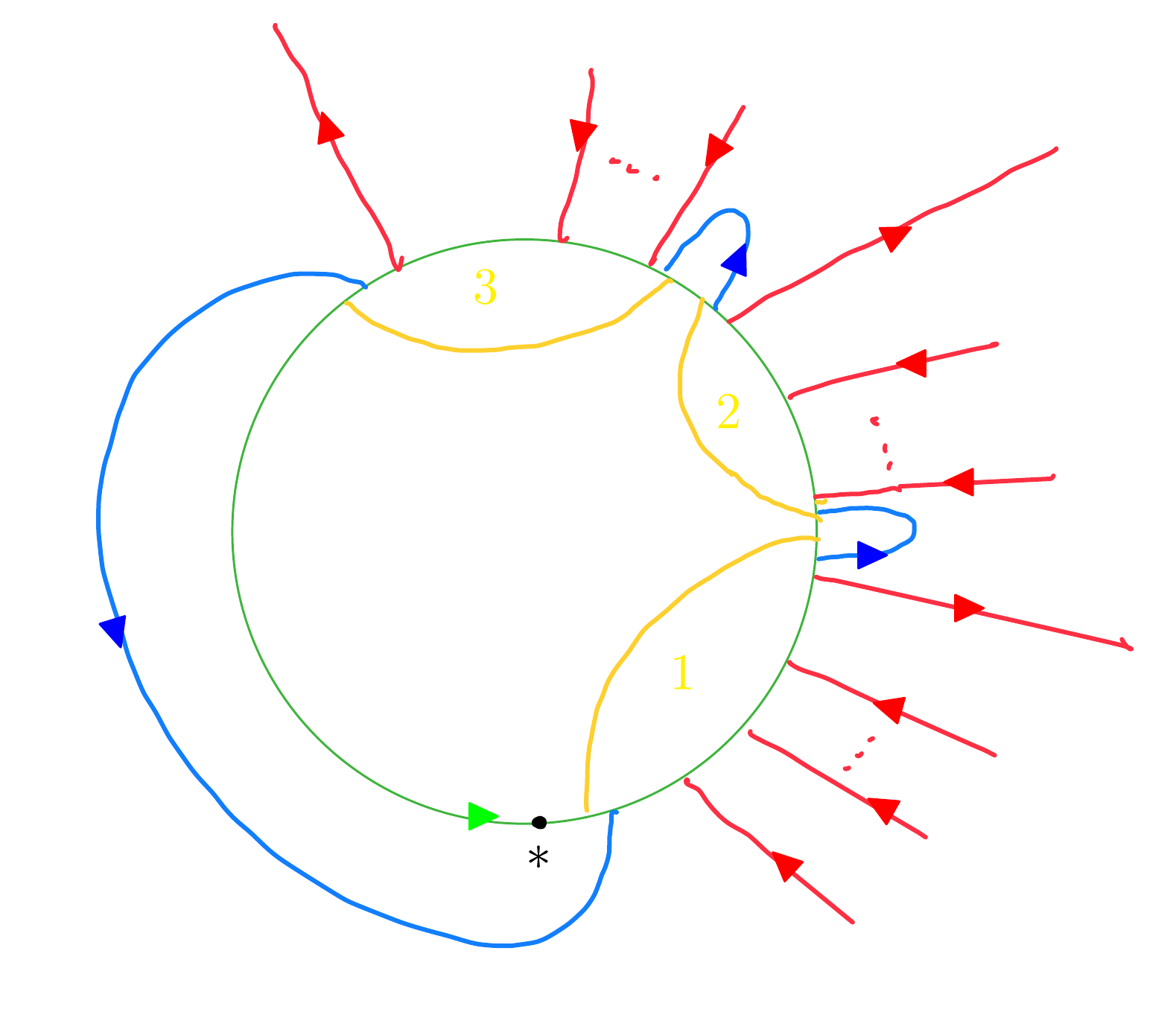}
\end{center}

\vskip -0.2in

\centerline{\textit{Figure 6}}

\end{rem}

\vskip 0.2in

\section{The flow lemma}

\begin{lem}[Flow lemma, this is in \cite{GL} for $n=1$ and knots] \label{lem: flow lemma} Let $\mathcal{K}$ be the Morse projection of a link. Then there is a natural bijection $\phi_{\pm }$
from the set of $\mathbb{Z}$-flows on $G_{\pm}\mathcal{K}$ to the set of $\mathbb{Z}$-flows of $P\mathcal{K}$. For $(+)$ it assigns the color of a blue edge of $G\mathcal{K}$ to the color of the part arc of $P\mathcal{K}$ following the transition, and assigns the red edge colors to the changes in the colors of the two incoming part arcs at the crossing corresponding to the transition. This map restricts for each $n\geq 1$ and $\pm $ to bijections of flows:
$$\phi_{\pm}: \mathcal{F}_n(G_{\pm}\mathcal{K})\rightarrow \mathcal{F}^n_{\pm}(P\mathcal{K})$$     
\end{lem}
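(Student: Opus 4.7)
The strategy is to define $\phi_+$ explicitly on edge colorings, verify it lands in $\mathcal{F}(P\mathcal{K})$, construct an inverse, and then restrict to the bounded case. Given $f\in\mathcal{F}(G_+\mathcal{K})$, I color each part arc $\alpha$ of $P\mathcal{K}$ lying on an overcrossing arc $v$ by
\[
g(\alpha) := f(e_{v_0}^b) + \sum_{u} f(e_u^r),
\]
where $e_{v_0}^b$ is the blue edge pointing into $v$ and the sum runs over those incoming red edges $e_u^r$ at $v$ whose associated crossings appear before $\alpha$ as we traverse $v$ in its orientation (for an overcrossing circle the initial color is read off the boundary color prescribed by Definition 3.2). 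This matches the lemma's prescription: blue edges give the post-undercrossing part-arc colors, and red edges record the jumps of colors along overcrossing arcs.

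To check that $g=\phi_+(f)$ is a flow on $P\mathcal{K}$, I label the four part arcs at a crossing $c$ of $\mathcal{K}$. The jump on the overcrosser at $c$ equals the color of the red edge attached to $c$ in $G_+\mathcal{K}$ by construction, while the jump from the last part arc of the ending overcrossing arc $v$ to the first part arc of the succeeding overcrossing arc is $f(e_v^b)$ minus the last-part-arc color on $v$. A short rearrangement shows that the $P\mathcal{K}$ flow equation at $c$ is logically equivalent to the $G_+\mathcal{K}$ flow equation at the unique vertex $v$ whose outgoing red edge is attached to $c$, namely
\[
f(e_{v_0}^b) + \sum_{u} f(e_u^r) \;=\; f(e_v^b) + f(e_v^r).
\]
Since the vertices of $G_+\mathcal{K}$ (of arc type) are in bijection with crossings of $\mathcal{K}$ via the outgoing red edge, this verifies the flow condition globally.

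The inverse $\psi$ is defined by reading off blue-edge values as colors of first part arcs on successor overcrossing arcs and red-edge values as overcrosser jumps at end-of-arc crossings; the same local identity now produces the $G_+\mathcal{K}$ flow condition from the $P\mathcal{K}$ one, and $\psi\circ\phi_+ = \phi_+\circ\psi = \mathrm{id}$ because the coloring of an overcrossing arc is determined by its initial color together with its sequence of jumps. The restriction to $n$-flows is then automatic: if $f\in\mathcal{F}_n(G_+\mathcal{K})$ then red-edge values are nonnegative (so $g$ is a $(+)$-coloring) and each part-arc color on $v$ lies between $f(e_{v_0}^b)\geq 0$ and $f(v)\leq n$, giving $g\in\mathcal{F}^n_+(P\mathcal{K})$; conversely $f(v)$ recovers the last-part-arc color on $v$, which lies in $\underline{n}$.

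The main obstacle I expect is the bookkeeping at boundary cases: overcrossing circles carry no incoming blue edge and must use the boundary color from Definition 3.2 in place of $f(e_{v_0}^b)$; figure-eight self-overcrossings produce vertices where an incoming and outgoing edge may coincide, so the sum defining $g(\alpha)$ must be interpreted with care along the traversal order; and for multi-component or long-component projections one must verify that the accumulated jumps are compatible with the cycle structure of the blue subgraph on each component (equivalently, that each blue cycle closes up consistently, which is a consequence of the flow condition summed around the cycle). The $(-)$-case is handled in parallel by swapping the roles of overcrosser and undercrosser throughout and replacing $G_+\mathcal{K}$ with $G_-\mathcal{K}$.
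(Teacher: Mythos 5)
Your proposal is correct and follows essentially the same route as the paper's proof: you identify blue edge colors with the part-arc colors following undercrossing transitions and red edge colors with the jumps, reconstruct part-arc colors along each overcrossing arc by accumulating incoming red values onto the incoming blue value, and observe that the flow equation at a crossing of $P\mathcal{K}$ is exactly the flow equation at the corresponding arc-type vertex of $G_{+}\mathcal{K}$, with the $n$-boundedness correspondence ($0\leq f(e_{v_0}^b)\leq$ part-arc colors $\leq f(v)\leq n$) argued just as in the paper. Your explicit construction of the inverse and the remarks on overcrossing circles and cycle consistency match the paper's treatment, so there is nothing to correct.
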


\begin{proof} We will only consider the $(+)$-case and let $\mu $ be the number of components of the link in the morse projection. A given general potential as above defines a $(+)$-coloring on $P\mathcal{K}$ 
in $\mathbb{Z}$ as explained following Definition 3.3. But it also defines a corresponding coloring $f$ of the arc graph $G\mathcal{K}$. In fact, define $f(e_v^r)=r(v)$ for all vertices $v$, with vertices identified with overcrossing arcs. Define $f(e_v^b)=\beta_j$ for the vertex $v$ corresponding to the overcrossing arc containing the specific part arc chosen for the basing of the projection in Definition 3.3 for all $v$ of $G\mathcal{K}$ corresponding to overcrossing arcs. There can be vertices of $G\mathcal{K}$ corresponding to overcrossing circles. Assign the corresponding base value $\beta_i$ if this vertex represents the $i$-th component. Note that the part arcs of $P\mathcal{K}$ uniquely correspond to part arcs in the boundaries of the fat vertices. So $(+)$-coloring of $G\mathcal{K}$ defined following Definition 3.3 also defines a coloring of all part-arcs, which are following a vertex $v$ at which an overcrossing arc $v$ ends. So this defines $f(e_v^b)$ for all vertices $v$ of $G\mathcal{K}$. Note that conversely the coloring of $G\mathcal{K}$ extends to all part arcs in the boundaries of fat vertices by using red arcs adjacent to the vertex. In fact, using the orientation of the boundary of each fat vertex, add the value $f(e_w^r)$ for each edge $e_w^r$ ending at vertex $v$.
Figure 7 below shows how the colors are mapped \textit{locally}. The left hand side shows the overcrossing arc 
$u$ and the colorings of $P\mathcal{K}$ defining the corresponding colorings of the edges at the vertex $u$ of $G\mathcal{K}$. If we are given $i,j,k,\ell $ then the colorings of $G\mathcal{K}$ are as given in the Figure. 
Conversely if we have given the edge colorings of the blue and red edges at the vertex $u$ of $G\mathcal{K}$ with the $f(e_u^r)=r$ then the color $i$ is determined by considering the segment of the overcrossing arc $w$ from its beginning until the crossing $w$. The coloring at the beginning is given by $f(e_{w'}^b)$ where $w'$ is the overcrossing arc preceding $w$. Then $i$ is defined by adding the jumps at all overcrossings before reaching the vertex $w$. Then $\ell =i+r$ and $j,k$ are defined as $j_0+r_1+\ldots +r_{\rho }$ and $k=j-r$. Since $r\leq j$ it follows $k\geq 0$. It also follows immediately that  
the flow conditions correspond to each other, and moreover a flow in $\mathcal{F}_+^n(P\mathcal{K})$ defined in this way, corresponds to a flow in $\mathcal{F}_n(G\mathcal{K})$.
For contributing states we assume that all colorings are nonnegative. We have $i,j,k,\ell \leq n$ for the coloring of 
the edges at vertex $u$ of $P\mathcal{K}$ (corresponding to the overcrossing arc vertex of $G\mathcal{K})$). 
Because we have a $(+)$-coloring of $P\mathcal{K}$ it follows $i+j=k+\ell $ and $\ell \geq i$. It follows $\ell -i\geq 0$. The numbers $r_1,\ldots ,r_{\rho }\geq 0$ are the jumps along the overcrossing arc $u$. It follows that 
$j=j_0+r_1+\ldots +r_{\rho }=\ell -i+k$. This is the flow condition also at the vertex $u$ of $G\mathcal{K}$.
Moreover $j\leq n \Longleftrightarrow (\ell -i)+k\leq n$. If we begin with an $n$-flow on $G\mathcal{K}$ we have $j\leq n$, which implies $k=j-r\leq n$. Also $\ell =i_0+r_1'+\ldots +r_{\rho '}'+r\leq n$ because this is a sum of colorings of incoming edges at the vertex $w$, and $i=\ell -r\leq n$. 
So the $n$-bounded $(+)$-flows on $P\mathcal{K}$ exactly correspond to the $n$-flows on $G\mathcal{K}$. 

\begin{center}
\includegraphics[width=0.99\textwidth]{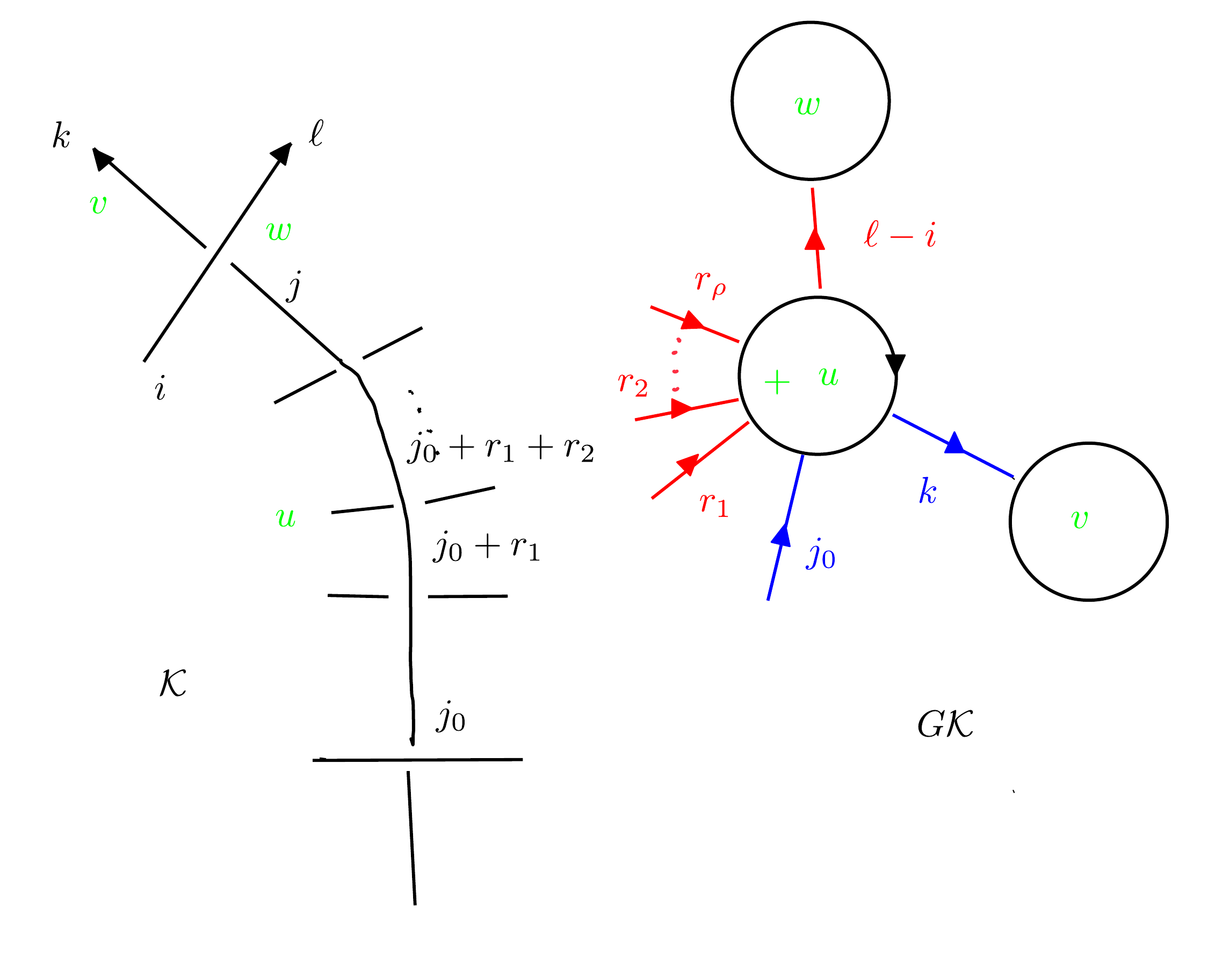}
\end{center}

\vskip -0.2in

\centerline{\textit{Figure 7}}

\end{proof}

\begin{rem} (a) The boundedness restrictions determine the contributing states in each model. 

\vskip 0.1in

\noindent (b) It was pointed out in the proof of  \cite{GL} in the knot case that colorings of the arc graph are determined by the colors of the red edges through the flow condition. Similarly Garoufalidis and Le point out that in the knot case the jumps at the crossings determine the part arc colorings of a long knot. 
It is in fact only this basic observation, which we generalize in the above lemma. 

\vskip 0.1in

\noindent (e) Our formula generalization for \cite{GL} differs from the formula by the replacement of $t$ with $t^{-1}$, corresponding to the various conventions for the skein relation of the Jones polynomial.

\vskip 0.1in

\noindent (f) The flow condition is not special to the colored Jones polynomial.  The $R$-matrix for the standard representation of $\mathfrak{sl}(N)$ given in \cite{T}, 4.2.1 gives non-trivial contributions only $i=j\leq k=\ell $, with $i,j$ denoting the incoming colors and $k,\ell $ the outgoing colors at a crossing. Note that this condition transfers to 
$$f(e_v^b)=\sum_{e<e_v^r}f(e)$$
for the vertex colorings of $G\mathcal{K}$, additionally to the flow condition.
\end{rem}

\section{The $R$-matrix state model}

We review and reformulate the state model for the framed colored Jones polynomial $J_n$ as defined in \cite{G}. Recall that $n$ labels the $(n+1)$-dimensional standard irreducible representation of $\textrm{sl}(2,\mathbb{C})$, or the corresponding type \textbf{1} $U_q(\textrm{sl}(2,\mathbb{C})$-module, see \cite{J}, 5.2.
 Our normalization will coincide with \cite{GL} so that $n=1$ corresponds to the trivial $2$-dimensional representation and the classical Jones polynomial. Link components will all be colored by the same representation. We will use the word \textit{coloring} for the edge-colorings of the projection graphs only.
Throughout $\varepsilon (v)$ will denote the sign of the crossing $v$ (vertex of the part arc-graph $P\mathcal{K})$. Moreover, in \cite{G} a variable $v$ is used for $J_n$ while we will use the variable $v^2=t$ in this paper, which corresponds up to a change from $t$ to $t^{-1}$ to the variable $t$ in \cite{GL}. So our variable $t$ corresponds to $q$ in \cite{G} and $t^{-1}$ in \cite{GL}.
While describing the state-sum formula of \cite{G} we will already rewrite all formulas to variable $t$. This includes rewriting quantum symbols in corresponding crossing-sign dependent functions, which are used in \cite{GL}.

Let $a,b\in \mathbb{Z}$. Then
$$\{a\}:=v^a-v^{-a}=v^{-a}(v^{2a}-1)=( t^{a/2}-t^{-a/2}=t^{-a/2}(t^a-1).$$
and 
$$[a]=\frac{\{a\}}{\{1\}}=\frac{v^a-v^{-a}}{v-v^{-1}}=v^{a-1}(1+v^{-2}+\cdots +v^{-2(a-1)})$$
This corresponds to
$$[a]=t^{\frac{a-1}{2}}(1+t^{-1}+\cdots +t^{-(a-1}).$$
This is mostly important for normalization. The $n$-th colored Jones polynomial of the unknot will be $[n+1]$.
In fact if a framed link $L$ contains $k$ trivially framed unknotted and unlinked components then $J_L=[n+1]^kJ_{L'}$, where $L'$ is the link defined from $L$ by omitting the unknotted and unlinked components, see \cite{G}. 

For $b\geq 0$ we define the \textit{Pochhammer symbol}
\begin{align*}
\begin{split}
\{a\}_b&=\{a\}\{a-1\}\cdots \{a-b+1\}\\
&=t^{-ab/2+b(b-1)/4}(t^a-1)(t^{a-1}-1)\cdots (t^{a-b+1}-1) \\
&=(-1)^bt^{-ab/2+b(b-1)/4}(1-t^a)(1-t^{a-1})\cdots (1-t^{a-b+1}) \\
&=t^{ab/2-b(b-1)/4}(1-t^{-a})(1-t^{-(a-1)})\cdots (1-t^{-(a-b+1)})
\end{split}
\end{align*}
and for $b<0$, $\{a\}_b:=0$. 
We will relate the above definition to the \textit{sign-dependent} Pochhammer symbol (not used by \cite{GL} but we will use in rewriting their formula) defined by
$$\{a\}_{b,t}:=\prod_{s=0}^{b-1}(1-t^{a-s})$$
with $\varepsilon \in \{\pm 1\}$.
\begin{equation}\label{eq=Pochhammer}
\{a\}_b=(-1)^{(1+\varepsilon )\frac{b}{2}}t^{-\varepsilon (\frac{ab}{2}-\frac{b(b-1}{4})}\{a\}_{b,t^{\varepsilon }}
\end{equation}

It follows for $b\geq 0$ from the definitions in \cite{G}:
\begin{equation*}
{a \choose b}:=\frac{\{a\}_b}{\{b\}_b}=t^{b(b-a)/2}\frac{(t^a-1)\cdots (t^{a-b+1}-1)}{(t^b-1)\cdots (t-1)}
\end{equation*}
Note that the expression on the right hand side is not defined for $b=0$, and is defined to be $1$ in this case. 
As above these quantum binomials are related to the sign-dependent versions in \cite{GL} as follows:
$(a)_t:=\frac{t^a-1}{t-1}$ and 
$(a)_t!:=(1)_t(2)_t\cdots (a)_t$
and
$${{a}\choose {b}}_t:=\frac{(a)_t!}{(b)_t!(a-b )_t!}=\frac{(t^a-1)\cdots (t^{a-b+1}-1)}{(t^b-1)\cdots (t-1)}$$
and thus 
\begin{equation}\label{eq=binomial}
{a \choose b}=t^{\varepsilon b(b-a)/2}{{a}\choose {b}}_{t^{\varepsilon }}
\end{equation}
This follows because
\begin{align*}
{{a}\choose {b}}_{t^{-1}}&=\frac{(t^{-a}-1)\cdots (t^{-(a-b+1)}-1)}{(t^{-b}-1)\cdots (t^{-1}-1)} \\
&=\frac{t^{-(a+(a-1)+\cdots (a-b+1))}}{t^{-(b+(b-1)+\cdots +1)}}{a\choose b}_t\\
&=t^{b(b-a)}{a\choose b}_t
\end{align*}

We will use the following well-known lemma for the \textit{symmetry} of quantum binomials:

\begin{lem} \label{lem=binomial symmetry} For $c,d$ nonnegative integers we have 
$${c+d \choose c}={c+d \choose d}$$ 
\end{lem}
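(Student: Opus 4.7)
The plan is to verify the identity by direct computation from the explicit formula
$$\binom{a}{b} = t^{b(b-a)/2} \frac{(t^a-1)(t^{a-1}-1)\cdots(t^{a-b+1}-1)}{(t^b-1)(t^{b-1}-1)\cdots(t-1)}$$
given in the excerpt, applied with $(a,b) = (c+d,c)$ and $(a,b) = (c+d,d)$. The first thing I would check is that the two prefactors agree: with $a = c+d$ and $b=c$ we get $t^{c(c-(c+d))/2} = t^{-cd/2}$, and with $b=d$ we get $t^{d(d-(c+d))/2} = t^{-cd/2}$, so the $t$-powers coming from the quantum normalization are the same on both sides.

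Next I would handle the remaining rational factor. After factoring out $t^{-cd/2}$ it suffices to verify
$$\frac{(t^{c+d}-1)(t^{c+d-1}-1)\cdots(t^{d+1}-1)}{(t^c-1)(t^{c-1}-1)\cdots(t-1)} = \frac{(t^{c+d}-1)(t^{c+d-1}-1)\cdots(t^{c+1}-1)}{(t^d-1)(t^{d-1}-1)\cdots(t-1)},$$
and the cleanest way is to observe that after cross-multiplication both sides become
$$\prod_{k=1}^{c+d}(t^k-1) = (c+d)_{t}!,$$
so each side equals the sign-dependent quantum binomial $\binom{c+d}{c}_t = \binom{c+d}{d}_t = \frac{(c+d)_{t}!}{(c)_{t}!\,(d)_{t}!}$, which is manifestly symmetric in $c$ and $d$. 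Combining this with the equality of the $t^{-cd/2}$ prefactors gives the claim.

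There is really no substantive obstacle here: the identity is the quantum analogue of $\binom{c+d}{c} = \binom{c+d}{d}$ and reduces to the observation that the sign-dependent factorial presentation $\frac{(c+d)_t!}{(c)_t!\,(d)_t!}$ is symmetric, together with the fact that the exponent $b(b-a)/2$ in the prefactor depends on $b$ only through the product $b(a-b) = cd$ when $a = b+(a-b)$. The only mild bookkeeping point is to track signs and the $t^{1/2}$-normalization carefully so that the exponent matching is transparent; this is where I would be most careful, since the paper mixes the conventions $\binom{a}{b}$ and $\binom{a}{b}_t$ related by formula (\ref{eq=binomial}), and an alternative presentation would be to first invoke (\ref{eq=binomial}) to reduce the whole statement to the manifestly symmetric sign-dependent identity and thereby avoid rewriting the prefactor by hand.
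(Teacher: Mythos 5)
You are correct, and your approach is essentially the paper's: both compute directly from the explicit formula, observe that the two prefactors coincide (the paper phrases this as $d(d-(c+d))=c(c-(c+d))$, both giving $t^{-cd/2}$), and then match the rational factors --- the paper by a two-case cancellation argument, you by cross-multiplying to get $\prod_{k=1}^{c+d}(t^k-1)$ on both sides, which amounts to recognizing the manifestly symmetric quotient $(c+d)_t!/\bigl((c)_t!\,(d)_t!\bigr)$. One small slip to fix: $\prod_{k=1}^{c+d}(t^k-1)$ equals $(c+d)_t!\,(t-1)^{c+d}$ rather than $(c+d)_t!$, but since the powers of $(t-1)$ cancel in the quotient, your identification of each side with ${c+d \choose c}_t={c+d \choose d}_t$ and hence the whole argument is unaffected.
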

\begin{proof}[Proof of lemma] We have the formula:
$${c+d\choose c}=t^{-cd/2}\frac{(t^{c+d}-1)\cdots (t^{d+1}-1)}{(t^c-1)\cdots (t-1)}$$
If $c\leq d$, multiplying both numerator and denominator on the right hand side by
$$(t^d-1)\cdots (t^{c+1}-1)$$
and noting that $d(d-(c+d))=c(c-(c+d))$ the result follows. 
If $d<c$ both numerator and denominator contain the product
$$(t^c-1)\cdots (t^{d+1}-1),$$
so after cancellation the result follows in this case too.
\end{proof}

Note that by formula \eqref{eq=binomial},
$${c+d\choose c}=t^{-\varepsilon cd/2}{c+d\choose c}_{t^{\varepsilon }}$$
and 
$${c+d\choose d}=t^{-\varepsilon dc/2}{c+d\choose d}_{t^{\varepsilon }}$$
so that symmetry of quantum binomials also holds for the sign-dependent versions. 

The formula in Proposition 3.7 of \cite{G} generalizes to link projections, as described in \cite{G}, Remark 3.5, in the following way:
Let $\mathcal{F}_{-,*}^n$ be the set of all colorings $(-)$ of the part arc-graph $P\mathcal{K}$ of a link projection $\mathcal{K}$ as defined in section 2. We we assume that the first strand of the braid projection is colored $0$ at the bottom, which is indicated by $*$. 
Let $J'_n(K)$ be the framed colored Jones-polynomial \cite{G}, we will discuss more details in section 6. Let $\mathfrak{s}$ be the number of braid strings and let $\{c_1,\ldots c_{\mathfrak{c}}\}$ be the set of crossings in the braid projection. Then 
$$J'_n(K)=\sum_{s\in \mathcal{F}_{-,*}^n}F(n,s)$$
where $F(n,s)$ is the product
\begin{equation}\label{eq=Fterm}
F(n,s):=\prod_{\ell=2}^{\mathfrak{s}}v^{-n+2b_{\ell }(s)}\prod_{k=1}^{\mathfrak{c}}f_{\varepsilon (c_k)}(n,i(k,s),j(k,s),r).
\end{equation}
Here $i(s),j(s)$ are the colors at the crossing $c_k$ determined by the state $s$ and the following convention
(colors are decreasing on overcrossings):

\begin{center}
\includegraphics[width=0.8\textwidth]{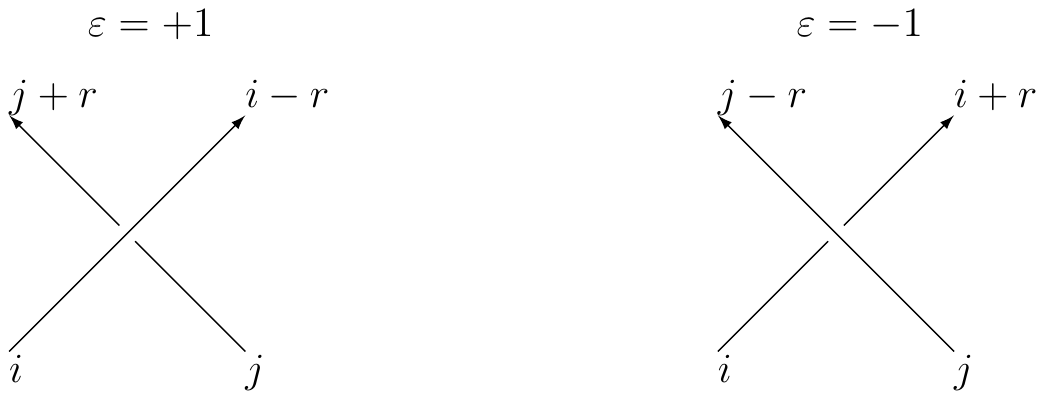}
\end{center}

\vskip -0.2in

\centerline{\textit{Figure 8}}

\vskip 0.1in

with the functions $f_{\pm }$ are defined by (see \cite{GL}, 3.1, \cite{Ja}, Chapter 3, and \cite{KiM}, 2.32 and 2.34)
\begin{equation}\label{eq=positive crossing}
f_+(n,i,j,r):=(-1)^rv^{-((n-2i)(n-2j)+r(r-1))/2}{j+r \choose r}\{n+r-i\}_r
\end{equation}
while the contribution at a negative crossing will be
\begin{equation}\label{eq=negative crossing}
f_-(n,i,j,r):=v^{((n-2i-2r)(n-2j+2r)+r(r-1))/2}{i+r \choose r}\{n+r-j\}_r.
\end{equation}

\vskip 0.1in

\begin{rem} (a) What is called state $s$ in \cite{G} in the long knot case we call a potential inducing the state on the part arc-graph with part arc colorings $(i(s),j(s))$ on part arcs labeled by the crossings. In order to be able to include overcrossing circles without crossings in the link case restrict the values of $r,\beta_j$ to $\underline{n}$ so that the colorings at extrema are in $\underline{n}$.

\noindent (b) Some care has to be taken when considering the above formula for the state-sum with Proposition 3.7 in \cite{GL} for the long knot case. Garoufalidis and Loebl sum over all possible nonnegative jumps at vertices, noticing that the quantum algebra definitions will set state-sum contributions to $0$ if colorings of part arcs are $>n$. This will not be true in the link case because of the $b_i(s)$ in the formula because there can be overcrossing circles such that the maximim is not connected to a crossing at which the label $b_i(s)$ appears. We have to restrict the state space so that all part arc colors are $\leq n$, or only consider projections without overcrossing circles. Note that the set of contributing states is usually much more restricted by the requirement that all induced part arc colors $c$ (defined from a given potential) satisfy $c\in \underline{n}$. 
\end{rem}

\noindent The following examples will show the dependence of state-sums on the framing.
We will also be able to state the skein relation of the classical Jones polynomial in the framed case, which is of course related to a modification of the Kauffman bracket. 

\begin{exmp} If we represent the unknot by the empty braid then there is a unique state with color $0$ on the single strand, and the state-sum gives $J'_n(U)=1$ for all $n$. The corresponding vertex product is empty because there is no crossing. It is known that the colored Jones polynomial is changed by a power of $v$ if we change the framing in the first component of a link (see \cite{L}, (1.7)). If we use the closure of the braid $\sigma_1$ to represent the unknot $U(1)$ with framing $+1$ the decreasing condition at the single vertex implies that the only state is the all-$0$ state and $J'_n(U(1))=v^{-n^2/2-n}$. On the other hand if we use $\sigma_1^{-1}$ to represent $U(-1)$, the $-1$-framed unknot, then we get states labeled by $0\leq r\leq n$ for a single crossing. 

\begin{center}
\includegraphics[width=0.3\textwidth]{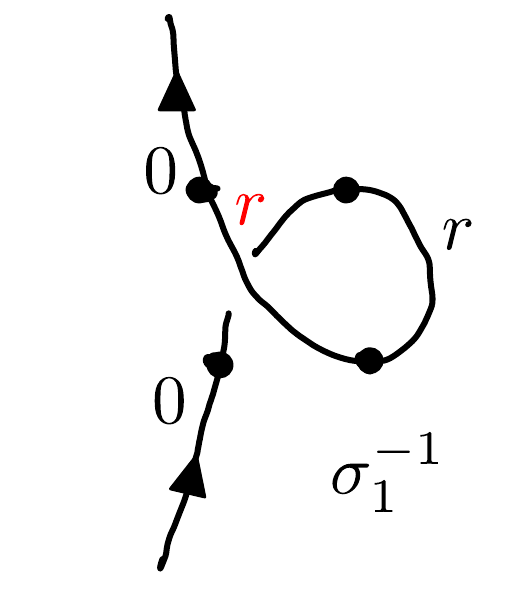}
\end{center}  

\vskip -0.2in

\centerline{\textit{Figure 9}}

\vskip 0.1in

Using ${r \choose r}=1$ the following state-sum results for $U(-1)$:

\begin{align*}
J_n'(U(-1))&=\sum_{0\leq r\leq n}v^{-n+2r}v^{\left(n(n-2r)+r(r-1)\right)/2}\{n\}_r\\
&=v^{n^2/2-n}\sum_{0\leq r\leq n}v^{r(2-n)+\frac{r(r-1)}{2}}\{n\}_r,
\end{align*}

\noindent and thus the following identity for the well-known $q$-Pochhammer symbol:

$$\sum_{0\leq r\leq n}v^{r(2-n)+\frac{r(r-1)}{2}}\{n\}_r=v^{2n},$$

\noindent which is easily proved by induction using $\{n+1\}_{r+1}=(v^{n+1}-v^{-(n+1)})\{n\}_r$.
For $n=1$ and $\{1\}_0=1$ (empty product) and $\{1\}_1=v-v^{-1}$ the right hand side is
$$1+v(v-v^{-1})=v^2$$
The induction step gives:
\begin{align*}
\ &1+\sum_{1\leq r\leq n+1}v^{r(2-(n+1))+\frac{r(r-1)}{2}}\{n+1\}_{r}\\
=&1+\sum_{0\leq r\leq n}v^{(r+1)[2-(n+1)]+\frac{r(r+1)}{2}}(v^{n+1}-v^{-(n+1)})\{n\}_r\\
=&1+v^2\cdot v^{2n}-v^{-2n}\cdot v^{2n}\\
=&v^{2(n+1)}
\end{align*}
\end{exmp}

\begin{exmp} We consider the case $n=1$. The framed Jones polynomial $J'=J_1'$ defined by the state-sum \cite{G} above satisfies the skein relation
$$v^{-1/2}J'(K_+)-v^{1/2}J'(K_-)=(v^{-1}-v)J'(K_0)$$
and $J'(U)=1$ for $U$ the trivially framed unknot (represented by the $1$-braid closure of a single strand), $J'(K\sqcup U)=(v^{-1}+v)J'(K)$ for each non-empty framed link $K$, and 
$$J'(K(1))=v^{-3/2}J'(K).$$
It is a good exercise to check the skein relation follows directly from the formula above: For $n=1$ only $r=0$ and $r=1$ are possible values at a crossing. For a given $\mathcal{K}$ and crossing we can divide the set of states at a crossing into $\mathcal{F}_{(\pm ,r)}$ with $r\in \{0,1\}$. For $r=0$ there are all colors $0$ or $1$ states possible for both the positive and negative crossing. For $r=1$ the sign of the crossing determines the possible part arc colorings at the crossing. For the Conway smoothing $K_0$ at the crossing the states are naturally divided into subsets by the two part arcs colored as $(\textrm{left},\textrm{right})$: $(0,0)$, $(0,1)$, $(1,0)$ and $(1,1)$. Now the all $0$, respectively all $1$-states, in $\mathcal{F}_{(\pm ,0)}$ correspond to $(0,0)$ and $(1,1)$-states of the smoothing. The vertex contribution of the crossing at the positive crossing is $v^{-1}$ while the contribution at the negative crossing is $v$, corresponding to the states on $K_0$ 
multiplied by $v^{-1}-v$. Recall that $\{1\}_1=v-v^{-1}$. For $r=1$ admissibility reduces the colorings at the positive crossing to $i=1$ and $j=0$ and the formula for $f_+$ shows vertex contribution $(-1)^1v^{-(1-2)/2}\{1\}_1$, which multiplied by $v^{-1/2}$ contributes the factor $-(v-v^{-1}=v^{-1}-v$ for each corresponding $(1,0)$-state of the smoothing, while the formula for $f_-$ shows vertex contribution $v^{(1-2)/2}\{1\}_1$, which multiplied by $-v^{1/2}$ 
also contributes $v^{-1}-v$ as a factor but this time to the $(0,1)$ states on $K_0$.
If we use the usual writhe normalization we have the definition of the unframed Jones polynomial
$$J(K)=v^{\frac{3}{2}\omega (\mathcal{K})}J'(\mathcal{K})$$
Substituting in the skein relation for framed links above and using $]\omega (K_{\pm})=\omega (K_0)\pm 1$ where we use the notation for the link instead of the projection, we get
$$v^{-2}J(K_+)-v^2J(K_-)=(v^{-1}-v)J(K_0)$$ 
\end{exmp}

\begin{rem} We note that in the formula for $F$ in \cite{G}, Proposition 3.7 the variable $v$ has to be replaced by $q$ and thus we have a factor $2$ additionally. This for example can easily be seen from the calculations of   $J_n' $ for the examples above. \end{rem}

\section{Proving the Garoufalidis-Loebl state-sum}

We will now rewrite the usual state-sum contributions as described in \cite{G} into the Garoufalidis Loebl state-sum contributions using the above identities and the flow Lemma. There two important steps: The first will be to transform the formulas in the last section into formulas for the $(+)$-model (increasing colors on overcrossing arcs at overcrossings). The second is to rewrite the two functions $f_{\pm }$ into a single vertex contribution formula, depending on the crossing sign. 

Our convention at crossings will be: 
\begin{center}
\includegraphics[width=0.8\textwidth]{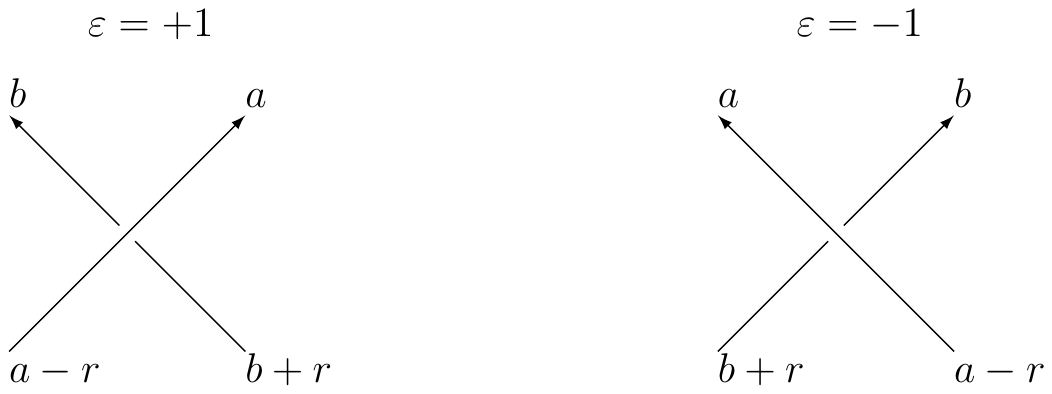}
\end{center}
\vskip -0.2in

\centerline{\textit{Figure 10}}

\vskip 0.1in

Note that in order to apply the formulas from section 5 we change the orientation of the projection. It is known that a change of orientation does not change the colored Jones polynomial. There will be though a change in rotation numbers, which will show up in defining $\textrm{rot}(f)$ for a state on $G\mathcal{K}$ in this section, and it will appear in the definition of $\delta '(\mathcal{K},n)$, which is a global power of $t$ for the state-sum contributions. We also point out that there are different conventions with respect whether right or left oriented extrema introduce the terms $b_{\ell }(s)$ in equation \eqref{eq=Fterm}.

\vskip 0.1in

ln the following we consider now $a,b,r,\varepsilon $ as functions on the set of crossings and often omit the argument in formulas. Then the vertex state-sum contribution at $c$ is given by \eqref{eq=positive crossing} and \eqref{eq=negative crossing}, with the variable $v$ replaced by the variable $t=v^2$. We also use here the symmetry of the quantum binomials. The following expression is immediate by substituting formulas \eqref{eq=binomial} and \eqref{eq=Pochhammer} and using Lemma \ref{lem=binomial symmetry}. It gives a single expression to be evaluated at each crossing with the sign of the crossing contained in the definitions of the $q$-algebra symbols. 

\begin{align*}
&(-1)^{(1+\varepsilon)\frac{r}{2}}t^{-\varepsilon /2\left((n-2b-(1-\varepsilon )r)(\frac{n}{2}-a+\frac{1}{2}(1-\varepsilon )r \right)+\frac{r(r-1)}{2}})\\
&\cdot  t^{\varepsilon /2 \cdot br}{{b+r}\choose {r}}_{t^{-\varepsilon}}\cdot (-1)^{(1+\varepsilon )\frac{r}{2}}t^{-\varepsilon (\frac{(n+r-a)r}{2}-\frac{r(r-1)}{4})}  \{n+r-a\}_{r,t^{\varepsilon }}
\end{align*}

\noindent Using that $1+\varepsilon \in \{0,2\}$ to eliminate the sign dependence, and cancelling the $r(r-1)/4$-terms, we see that the vertex contribution function is given by
\begin{equation}\label{eq=vertex contribution}
t^{\varepsilon /2\cdot (-n^2/2+ n\rho +\sigma)-\frac{1}{2}\tau}\cdot {{b+r}\choose {r}}_{t^{-\varepsilon}}\cdot \{n+r-a\}_{r,t^{\varepsilon }}
\end{equation}
with the two functions, defined for each crossing:
\begin{align*}
\rho &=b+a-r \\
\sigma& =-2ab+(b-a)r+br-(r-a)r+r^2 \\
\tau&=r(b-a+r)
\end{align*}
Here we also used $\frac{1}{2}(1-\varepsilon )^2=(1-\varepsilon )$.
If we let $a=r+R$ then 
$$\sigma =-2ar-2aR+(b-a)r+br-r^2+ar+r^2=2b(r-a)=-2bR$$

\vskip 0.1in

\noindent The crucial geometric statements leading to the Garoufalidis-Loebl  formula are contained in the following
two propositions. As noted above we usually omit the arguments of the functions defined on the set of crossings.

\vskip 0.1in

\begin{prop} For each $\mathbb{Z}$-coloring of a chord graph, respectively the corresponding coloring of a part arc-graph inducing this coloring,
$$\sum_cr(a-b-r)=0$$
\end{prop}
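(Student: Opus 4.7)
The identity is quadratic in the colors, so the linear cycle relations built into the definition of a potential will not suffice on their own; instead the plan is to exploit the fact that the \emph{squared} part-arc colors telescope around each component. Concretely, I will fix a component $C_i$ of the link and traverse it cyclically, recording the sequence of part-arc colors $\alpha_0,\alpha_1,\ldots,\alpha_{N_i}=\alpha_0$. In the $(+)$-coloring convention of Figure 10, passing through a crossing $c_k$ changes the color by $+r(c_k)$ when $C_i$ is the overcrossing strand at $c_k$ and by $-r(c_k)$ when $C_i$ is the undercrossing strand; this is exactly the flow rule used in Definition~\ref{defn=potential} and the discussion immediately after it.

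Next, I apply the telescoping identity
\begin{equation*}
\sum_{k=0}^{N_i-1}\bigl(\alpha_{k+1}^2-\alpha_k^2\bigr)=0,
\end{equation*}
which holds because the walk closes up on $C_i$. Each summand is $2\alpha_k r(c_k)+r(c_k)^2$ when $C_i$ is the overcrossing strand at $c_k$ and $-2\alpha_k r(c_k)+r(c_k)^2$ when $C_i$ is the undercrossing strand. Rewriting $\alpha_k$ as the appropriate incoming crossing label from Figure~10 ($b(c_k)$ on the overcrossing side, $a(c_k)$ on the undercrossing side) I get an identity for each component in terms of the crossings it visits, in the form $\sum_{c\in\text{over }C_i}(2b(c)r(c)+r(c)^2)+\sum_{c\in\text{under }C_i}(-2a(c)r(c)+r(c)^2)=0$.

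Finally, I sum these identities over all components $i=1,\ldots,\mu$. Every crossing $c$ is visited exactly once as an overcrossing (on the component containing the overcrossing arc at $c$) and exactly once as an undercrossing (on the component containing the undercrossing arc at $c$, possibly the same one). The $r(c)^2$ contributions add to $2\sum_c r(c)^2$, and the mixed terms add to $2\sum_c r(c)(b(c)-a(c))$, so after division by $2$ and rearrangement I obtain $\sum_c r(c)(a(c)-b(c)-r(c))=0$, which is the stated identity.

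The main obstacle is not the telescoping itself, which is elementary, but the sign bookkeeping: one must verify that the labels $a(c),b(c)$ in Figure~10 are attached to the correct incoming arcs so that the argument produces $r(a-b-r)$ rather than $-r(a-b-r)$ or $r(a-b+r)$. Once the $(+)$-convention of Figure~10 is read off carefully, the identity reduces to the telescoping above and there are no further analytic or combinatorial difficulties; in particular the argument is valid for \emph{any} $\mathbb{Z}$-coloring induced by a potential (not only contributing ones), as claimed.
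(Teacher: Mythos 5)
Your proof is correct, and it takes a genuinely different and substantially shorter route than the paper's. Writing $o(c),u(c)$ for the incoming colors on the over- and undercrossing strands at a crossing $c$, your telescoping of squared part-arc colors around each component gives $0=\sum_c\bigl(2o(c)r(c)+r(c)^2\bigr)+\sum_c\bigl(-2u(c)r(c)+r(c)^2\bigr)$, hence $\sum_c r(c)\bigl(u(c)-o(c)\bigr)=\sum_c r(c)^2$, which is the stated identity; only the closure of the traversal around each component (equivalently, the cycle relations) is used, so the argument applies to every $\mathbb{Z}$-coloring and to abstract chord graphs alike, with no case distinctions. The paper instead rewrites the claim as $\sum_c r(a-b)=\sum_c r^2$, proves it first for chord graphs having exactly one chord between each pair of distinct circles by reducing to a lemma on skew-symmetric matrices with vanishing column sums (proved by exhibiting generators of that module and checking invariance under adding them), and then propagates the identity to general chord graphs through a sequence of local modifications: doubling chords, attaching self-chords, and switching the order in which chord endpoints meet the circles. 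Your computation replaces the matrix lemma and all of that case analysis at once; what the paper's route buys is the matrix lemma itself, which has some independent combinatorial interest, and a repertoire of local moves that is reused in the proof of the companion statement $\sum_c\varepsilon(a-b)=\sum_c\varepsilon r$, where the weights are crossing signs rather than the colors $r(c)$ and your quadratic telescoping has no direct analogue.

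Concerning the sign bookkeeping you rightly flag: it resolves favorably. In the paper's conventions one can check from the identifications $b=f(e_v^b)$ and $a=r+\sum_{e<e_v^r}f(e)$ in section 6 that $a$ is the outgoing color on the overcrossing strand and $b$ the outgoing color on the undercrossing strand, so $a-b-r=-(u(c)-o(c)-r(c))$ in your notation; since your sum $\sum_c r(c)\bigl(u(c)-o(c)-r(c)\bigr)$ vanishes, so does its negative, and either self-consistent reading of Figure 10 (both labels incoming, with $a$ on the undercrossing side, or both labels outgoing, with $a$ on the overcrossing side) validates the proposition. The genuinely harmful misreading, taking $a$ on the overcrossing side and $b$ on the undercrossing side with both incoming, would instead produce $-2\sum_c r(c)^2$, so your insistence on reading the figure carefully is warranted, but it is the only real pitfall and it does not occur.
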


\vskip 0.1in

\begin{proof}
The formula is equivalent to:

\begin{equation}
\sum_cr(a-b)=\sum r^2
\end{equation}

We will first show the equation for chords only between distinct components. As a first step we consider the abstract chord graph with exactly one chord for each $1\leq j<k\leq \ell $ directed from the $k$-th to the $j-th $ component with chord color $r_{j,k}$. In this case the sum $\sum_cr(a-b)$ is easily computed as follows. The contribution of $r_{jk}$ in the sum is 
\begin{equation}
r_{jk}(-\sum_{1\leq i\leq j-1}r_{ij}+\sum_{j<i\leq k}r_{ji}+\sum_{1\leq i\leq j}r_{ik})
\end{equation}
\vskip 0.1in

\begin{center}
\includegraphics[width=0.8\textwidth]{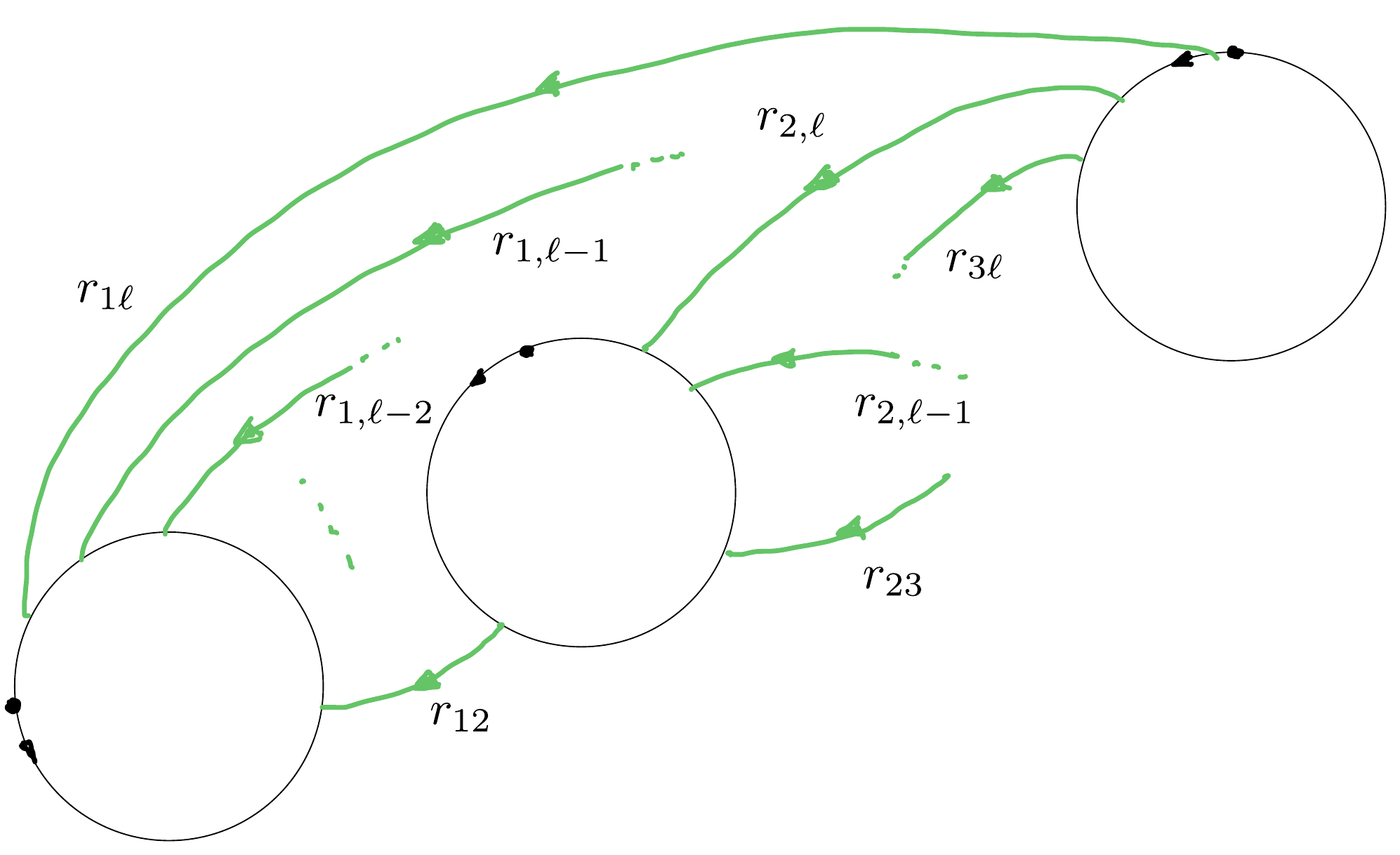}
\end{center}

\vskip -0.2in

\centerline{\textit{Figure 11}}

\vskip 0.1in

By using the cycle relation for the $j$-th component, the first two sums can be replaced by $-\sum_{k<i\leq \mu }r_{ji}$ and so the contribution from $r_{jk}$ is (those be read from the incoming chords of the $j$-th component and the corresponding values on the $k$-th component):
\begin{equation}
r_{jk}(r_{jk}+\sum_{1\leq i<j}r_{ik}-\sum_{k<i\leq \mu }r_{ji})=r_{jk}^2+r_{jk}(\sum_{1\leq i<j}r_{ik}-\sum_{k<i\leq \mu }r_{ji})
\end{equation}

\vskip 0.1in

\begin{center}
\includegraphics[width=0.8\textwidth]{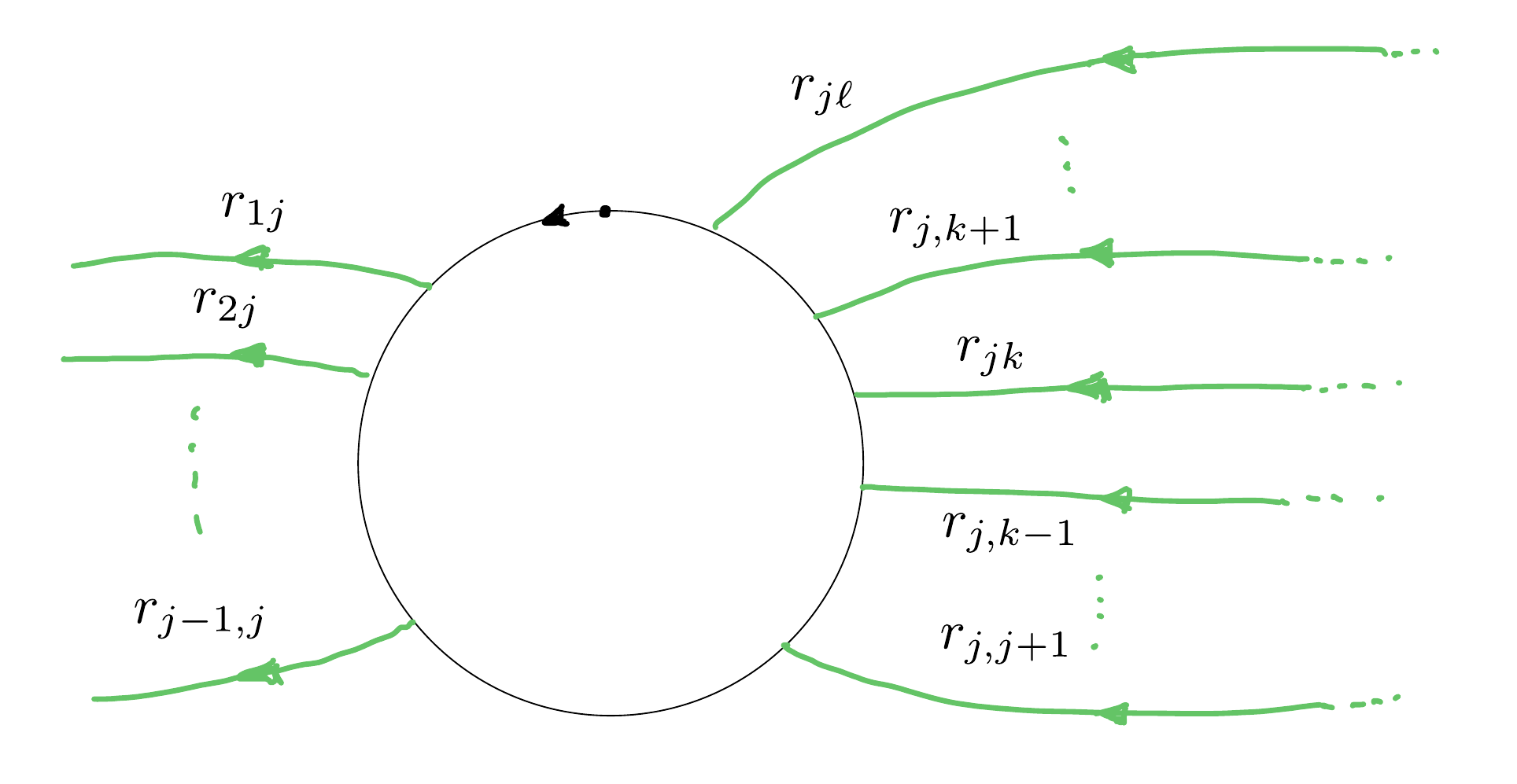}
\end{center}

\vskip -0.2in

\centerline{\textit{Figure 12}}

\vskip 0.1in

Thus it suffices to prove the claim:
\begin{equation}
\sum_{1\leq j<k\leq \mu}r_{jk}(\sum_{1\leq i<j}r_{ik}-\sum_{k<i\leq \mu }r_{ji})=0
\end{equation}
Now we can define a skew-symmetric matrix $\mu \times \mu $-matrix $(r_{jk})$ with the coefficients $r_{jk}$ for $j<k$ given by chord colors of the chord graph. The $\mu $ cycle relations then exactly say that the sum of columns of this matrix is the zero-vector. In this case, the claim follows from the lemma below. 
The chord graphs resulting from actual link projections differ from the standard graph considered above by doubling of chords and switching the order of attachment to the circles. We will show that the relation of the proposition remains true in this case. Also note since the relation holds for chord colorings by arbitrary integers and 
changing chord orientation and color by multiplication by $-1$ we finally prove the result for arbitrary chord graphs and thus arbitrary part arc-graphs. 

We calculate the sums $\sum r(a-b)$ for chord graphs differing locally only by the following chord respectively two chords as in the picture 

\vskip 0.2in

\begin{center}
\includegraphics[width=0.95\textwidth]{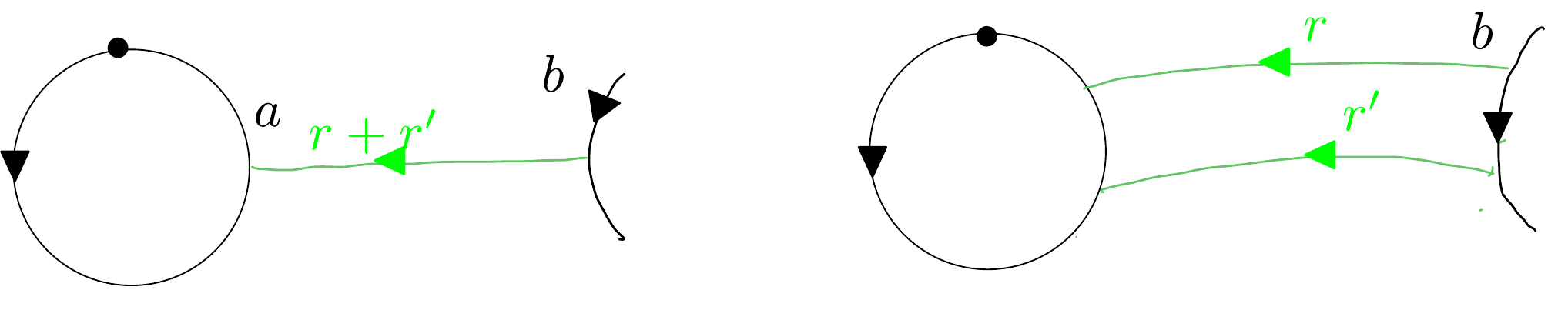}
\end{center}

\vskip -0.2in

\centerline{\textit{Figure 13}}

\vskip 0.1in

Then the contribution to the sum from the chord in the left picture is 
$$(r+r')(a-(b-(r+r'))=(r+r')^2+(r+r')(a-b)$$
while the contribution from the two chords on the right is
$$r(a-(b-r))+r'(a-r-(b-r-r'))=r^2+r'^2+(r+r')(a-b)$$
Note that the contributions from all other chords are unchanged.
In going from formula (1) to formula (3) above we eliminate copies of $r^2$ and both sides and then justify that the remaining sum on the left of the equation is $0$. But this means that in the specific situation above we subtract $(r+r')^2$ for the above sum and specific chord and $r^2+r'^2$ for the doubled chord. The result is the same in each case as our computation shows. Because for the chord colored $r+r'$ we know from before that the sum is $0$ it will also follow for the doubled chord.   

\vskip 0.1in

Next, adding chords attached to single circles, so corresponding to self-crossings in the part arc-graph, we first 
attach a chord just in the special way:

\begin{center}
\includegraphics[width=0.35\textwidth]{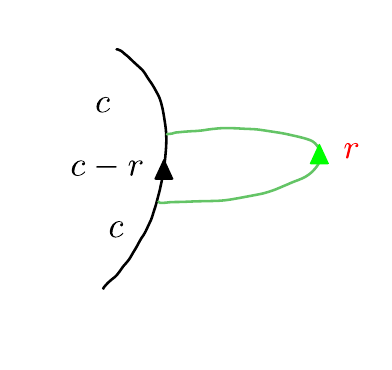}
\end{center}

\vskip -0.3in

\centerline{\textit{Figure 14}}

\vskip 0.1in

Note that this will change $\sum r(a-b)$ by a contribution $r(c-(c-r))=r^2$ so the result is true in this case. 
It suffices at this point to show that the relation remains true under arbitrary switches in the way the chords are attached to the circles. It will suffice to consider the following case of two consecutive chords. There will other chords attached to the circles not shown.

\vskip 0.2in

\begin{center}
\includegraphics[width=0.45\textwidth]{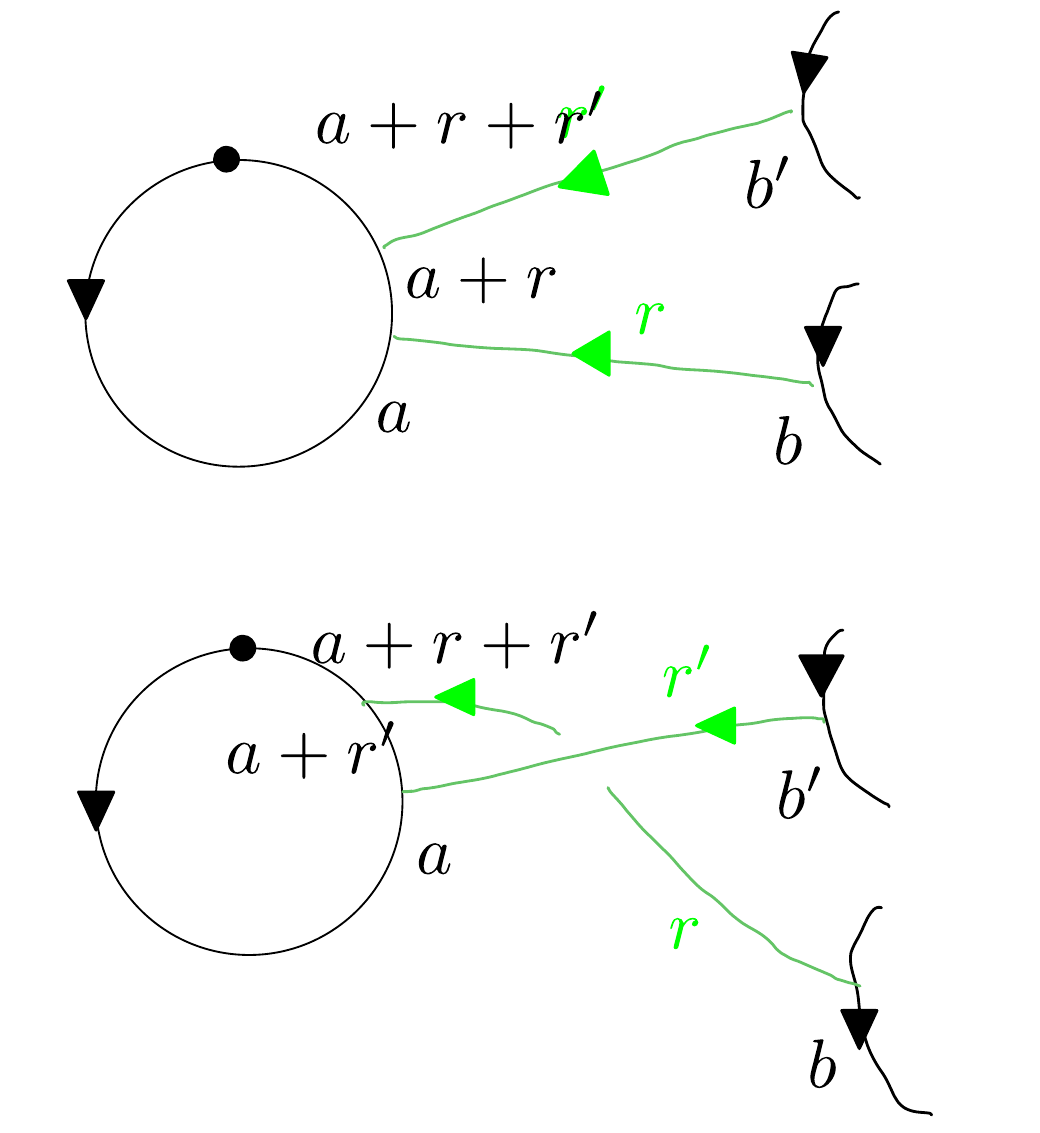}
\end{center}

\vskip -0.2in

\centerline{\textit{Figure 15}}

\vskip 0.1in

The contribution to $\sum r(a-b)$ for the above picture is
$$r(a+b-r)+r'(a+r+r'-b')$$
and after switching the endpoints of chords:
$$r'(a+r'-b')+r(a+r+r'-b),$$
which in each case is $r^2+r'^2+rr'+r(a-b)+r'(a-b')$, so the sum is not changing. But any possible constellation of chords joining distinct components of the chord graph can be reached by sequences of those switches.
\end{proof}

\vskip 0.1in

\begin{lem} Let $A=(a_{jk})$ be an $\mu \times \mu $ skew-symmetric matrix (over $\mathbb{Z}$ (the argument works in fact over any commutative ring with $1$) such that for $A=(a^1,\ldots ,a^{\mu})$ we have the sum of column vectors $\sum_{j=1}^{\mu }a^j$ is the zero-vector.
Define for each $j<k$,
$$s_{jk}=\sum_{1\leq i<j}a_{ik}-\sum_{k<i\leq \mu}a_{ji}.$$
Then 
$$(*) \qquad \qquad \sum_{1\leq j<k\leq \mu}a_{jk}s_{jk}=0$$
\end{lem}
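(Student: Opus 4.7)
The plan is to substitute the definition of $s_{jk}$ into $\sum_{j<k} a_{jk} s_{jk}$, rewrite the result as two triple sums indexed by ordered triples $i<j<k$, and then apply the elementary identity $2\sum_{p<q}x_p x_q = (\sum_p x_p)^2 - \sum_p x_p^2$ to each half. The column-sum hypothesis, combined with skew-symmetry, will then force the two halves to coincide, producing the desired cancellation.

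First I would substitute and split, obtaining
\begin{equation*}
\sum_{j<k} a_{jk}\Bigl(\sum_{i<j} a_{ik} - \sum_{i>k} a_{ji}\Bigr)
 \;=\; \sum_{i<j<k} a_{jk}a_{ik}\;-\;\sum_{j<k<i} a_{jk}a_{ji}.
\end{equation*}
Relabeling the second sum by the permutation $(j,k,i)\mapsto(i,j,k)$ puts both sums into the canonical range $i<j<k$, so the left-hand side of $(*)$ becomes
\begin{equation*}
\sum_{i<j<k} a_{ik}a_{jk}\;-\;\sum_{i<j<k} a_{ij}a_{ik}.
\end{equation*}
Next, I would group the first sum by the largest index $k$ and the second by the smallest index $i$, and apply the square-of-a-sum identity to each; multiplying by $2$, this rewrites the expression as
\begin{equation*}
\sum_{k}\Bigl[\Bigl(\sum_{i<k} a_{ik}\Bigr)^{2} - \sum_{i<k} a_{ik}^{2}\Bigr]
 \;-\;\sum_{i}\Bigl[\Bigl(\sum_{j>i} a_{ij}\Bigr)^{2} - \sum_{j>i} a_{ij}^{2}\Bigr].
\end{equation*}

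The hypotheses then enter cleanly. The column-sum condition $\sum_i a_{ik}=0$ gives $\sum_{i<k}a_{ik}=-\sum_{i>k}a_{ik}$, and skew-symmetry turns the right-hand side into $\sum_{i>k}a_{ki}$. After renaming the outer summation variable, the two squared-sum contributions are seen to be identical. The two sum-of-squares contributions are likewise equal, as both reduce to $\sum_{i<j}a_{ij}^{2}$ after a change of summation index. Hence twice the original sum vanishes, and the lemma follows over $\mathbb{Z}$; over an arbitrary commutative ring with $1$ it then follows by the universal-coefficient principle, since the universal skew-symmetric matrix with columns summing to zero lives in a torsion-free polynomial ring.

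The main obstacle is purely bookkeeping: getting the reindexing of the second triple sum right and then matching the two halves term by term after applying the square-of-a-sum identity. Once the index ranges are aligned, the column-sum hypothesis and skew-symmetry conspire to make the two bracketed expressions literally equal, so the cancellation is transparent and no further input about the matrix $A$ is needed.
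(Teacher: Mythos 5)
Your proof is correct, and it takes a genuinely different route from the paper's. The paper argues structurally: it shows that the module $\mathcal{A}_{\mu}$ of skew-symmetric matrices with zero column sums is generated by explicit elementary matrices $E_{jk}$ built from $2\times 2$ blocks, and then verifies that if $A$ satisfies $(*)$ then so does $A\pm E_{jk}$, the new cross terms cancelling via the zero-column-sum and skew-symmetry relations; starting from $A=0$ this yields the lemma. You instead expand directly, $\sum_{j<k}a_{jk}s_{jk}=\sum_{i<j<k}a_{ik}a_{jk}-\sum_{i<j<k}a_{ij}a_{ik}$, group the first triple sum by the largest index and the second by the smallest, apply $2\sum_{p<q}x_px_q=\bigl(\sum_p x_p\bigr)^2-\sum_p x_p^2$, and use $\sum_{i<k}a_{ik}=\sum_{i>k}a_{ki}$ (column sums plus skew-symmetry) to match the two halves; I checked the reindexing $(j,k,i)\mapsto(i,j,k)$ and the matching of both the squared-sum and sum-of-squares pieces, and the computation is sound. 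Your argument is shorter and self-contained, needing no generating set at all. What the paper's route buys is that it never divides by $2$, so it works verbatim over any commutative ring with $1$, which is exactly the parenthetical claim in the lemma; your proof as written settles the case over $\mathbb{Z}$ completely, but the final universal-coefficient sentence is asserted rather than proved. It does hold, and takes one line: a skew-symmetric matrix with zero column sums over any commutative ring is freely determined by its entries $a_{jk}$ with $1\leq j<k\leq\mu-1$, since skew-symmetry and the vanishing row and column sums force $a_{j\mu}=-\sum_{k<\mu}a_{jk}$; hence every such matrix is a specialization of the universal one over the $2$-torsion-free ring $\mathbb{Z}[x_{jk}:1\leq j<k\leq\mu-1]$, where your identity $2\cdot(*)=0$ gives $(*)=0$. (This uses that skew-symmetric is taken to include zero diagonal; in characteristic $2$ the lemma fails otherwise.) With that one line added, your proof covers the full statement.
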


\begin{proof} The set $\mathcal{A}_{\mu }$ of skew symmetric $\mu \times \mu $ matrices  with zero column sum is a submodule of the module of all skew symmetric matrices. Note that the skew symmetry implies the zero row sum condition. It is easily proved by induction that the following set of elementary matrices $E_{jk}$ defined for $1\leq j<\mu$ and $j<k\leq \mu $ is a generating set of $\mathcal{A}_{\mu}$. Let $(j,k)$ be the position of the upper left entry of the $2\times 2$-matrix 
$$\begin{pmatrix} 1 &  -1 \\ -1 & 1\end{pmatrix}\ \textrm{for} \ k>j+1$$
in a skew symmetric matrix with only the entries for $(j,k), (j,k+1), (j+1,k), (j+1,k+1)$ above the diagonal are non-zero and given by the $2\times 2$ matrix, and the entries below the diagonal are such that the matrix is skew-symmetric. Similarly,  we position the $2\times 2$-matrix
$$\begin{pmatrix} 1 & -1 \\ 0 & 1\end{pmatrix} \ \textrm{for}\ k=j+1$$
above the diagonal with the $0$ on the diagonal, and define the entries below the diagonal to get a skew-symmetric matrix. For example, when $\ell =3$ there is only the matrix
$$E_{23}=\begin{pmatrix} 0 & 1 & -1 \\ -1 & 0 & 1 \\ 1 & -1 & 0 \end{pmatrix},$$
corresponding to $\dim \mathcal{A}_{\mu }=1$. Note that for $\mu =0,1$, $\mathcal{A}_{\mu }=\{0\}$. For $\mu >3$ we can proceed inductively to prove that the matrices $E_{jk}$ generate:
$$A=a_{12}E_{12}+(a_{12}+a_{13})E_{13}+\cdots +(a_{12}+\cdots +a_{1,\mu -1})E_{1,\mu -1}+A',$$
with $A'\in \mathcal{A}_{\mu }$ a matrix with first row and column $0$ because $\sum_{i=2}^{\mu } a_{1i}=0$. To prove the lemma it thus suffices to show that for each $A\in \mathcal{A}_{\mu }$ satisfying $(*)$, als $B=A+E_{jk}$ satisfies $(*)$. It suffices to consider $A+E_{jk}$, the result for $A-E_{jk}$ is obvious because if $A$ satisfies $(*)$ also $-A$ satisfies $(*)$. For $k>j+1$ this is easily checked. Note that the coefficients of $B$ are only changed in the $2\times 2$-submatrix with upper left position at $(j,k)$, and there it will be
$$\begin{pmatrix} a_{jk}+1 & a_{j,k+1}-1 \\ a_{j+1,k}-1 & a_{j+1,k+1}+1\end{pmatrix}$$
Contributions in $(*)$ for $B$ from $a_{i\ell }$ with $\ell <k$ and $i=j,j+1$, or $i>j+1$ and $\ell =k,k+1$ will not change because $\pm 1$ cancel out in $s_{i \ell}$. So it suffices to consider how $(*)$ is changing considering the products of 
$b_{i\ell }$ with the corresponding $s_{i\ell }$ and $i\in \{j,j+1\}, \ell \in \{k,k+1\}$. 
Finally, new contributions in the products $(a_{j,k}+1)(-(a_{j,k+1}-1))$, $(a_{j+1,k}-1)(-(a_{j+1,k+1}+1))$, $(a_{j+1,k}-1)(a_{jk}+1)$ and $(a_{j+1,k+1}+1)(a_{j,k+1}-1)$ will cancel as is easily checked. The case $k=j+1$ is similar but involves both the zero column sum and skew symmetry. In order to avoid too many index consideration we will discuss the case $\mu =5$ and $E_{23}$ because it shows all necessary considerations. In this case the upper diagonal part of the matrix $B$ is given by
$$\begin{pmatrix} 0 & a_{12} & a_{13} & a_{14} & a_{15} \\  \ & 0 & a_{23}+1 & a_{24}-1 & a_{25} \\ \ & \ & 0 & a_{34}+1 & a_{35} \\ \ & \ & \ & 0 & a_{45} \\ \ & \ & \ & \ & \ 0 \end{pmatrix}$$
In this case multiplying out just terms in $a_{jk}$ and $s_{jk}$ within the changed $2\times 2$-submatrix gives 
$$(a_{23}+1)(-(a_{24}-1))=-a_{23}a_{24}+a_{23}-a_{24}+1$$
$$(a_{34}+1)(a_{24}-1)=a_{34}a_{24}-a_{34}+a_{24}-1$$
Note that the $\pm a_{24}$ and $\pm 1$ cancel out, the $-a_{23}a_{24}$ and $a_{34}a_{24}$ are in the $(*)$-summation for $A$, so there is a contribution of $a_{23}-a_{34}$. But the $+1$ in the upper left contributes an additional $a_{13}$, the $+1$ in the lower right an additional $-a_{35}$. So the change in $(*)$ is given by
$$a_{13}+a_{23}-(a_{34}+a_{35})=0,$$
which follows by considering the zero row sum relation in column $3$. The general argument is completely analogous with $a_{13}$ and $a_{35}$ contributions replaced by corresponding sums. 
\end{proof}

\begin{prop} \label{prop=2} For each $\mathbb{Z}$-coloring of a signed chord graph, respectively the corresponding coloring of a part arc-graph inducing this chord graph coloring,
$$\frac{1}{2}\sum_c\varepsilon (b+a-r)=\sum_c\varepsilon b,$$
or equivalently
$$\sum_c\varepsilon (a-b)=\sum_c\varepsilon r.$$
\end{prop}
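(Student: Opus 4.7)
The claim is $\mathbb{Z}$-linear in the potential data $(\beta,r)$, so the plan is to verify it separately on the $\beta$-part and the $r$-part of a general valid coloring. First I would dispose of the $\beta$-contributions: setting all $r_c=0$ and a single $\beta_i=1$ (others zero), the induced color is $1$ on component $i$ and $0$ on every other component. Self-crossings of $i$ then contribute nothing to $\sum_c\varepsilon_c(a_c-b_c)$ since $a_c=b_c$, while a crossing between $i$ and a distinct component contributes $+\varepsilon_c$ or $-\varepsilon_c$ according as $i$ is over or under. The total reduces to $\bigl(\sum_{c:\,i\text{ over}}\varepsilon_c\bigr)-\bigl(\sum_{c:\,i\text{ under}}\varepsilon_c\bigr)$ summed over crossings of $i$ with other components, and both sums equal $\sum_{j\neq i}\mathrm{lk}(i,j)$, so they cancel. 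Since all $r_c$ vanish in this case, the right-hand side is also zero.

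For the $r$-contributions (all $\beta=0$) the relation becomes $\sum_c\varepsilon_c(a_c-b_c)=\sum_c\varepsilon_c r_c$, and I would mirror the strategy of Proposition 6.1. First, work out the identity directly on the ``standard'' abstract chord graph with one chord for each pair $1\le j<k\le\mu$: at each such chord $r_{jk}$ the quantity $a-b$ is an explicit signed sum of the $r_{ij}$'s (exactly as in the analysis carried out in the proof of Prop 6.1), and after applying the cycle relations the whole identity reduces to a purely algebraic relation in the $r_{jk}$'s and $\varepsilon_{jk}$'s, verifiable via a sign-enhanced analog of the linear-algebra lemma used in that proof. Then propagate the identity from the standard graph to arbitrary chord graphs coming from link projections by showing invariance under switching of attachment points along circles and under the addition of self-chords, which together generate all valid configurations.

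The main obstacle is that Proposition 6.1's ``doubling'' move does \emph{not} preserve the Prop 6.2 identity: splitting a chord of color $r+r'$ and sign $\varepsilon$ into two parallel chords of the same sign changes the left-hand side by a nonzero linear expression in the colors, and in fact the doubled abstract graph typically violates the parity condition $\sum_{c\in V_{ij}}\varepsilon_c\in 2\mathbb{Z}$ required to come from a link projection. The invariance arguments must therefore be carried out only inside the class of signed chord graphs coming from link projections, with naive same-sign doublings replaced by Reidemeister-II-type pairs of chords carrying opposite signs and opposite $\lambda_i$; the parity constraint enters essentially. A possible alternative that I would pursue in parallel is an Abel-summation approach on each component: writing $\sum_c\varepsilon_c(a_c-b_c)=\sum_i S_i$ with $S_i=\sum_{c\text{ on }i}\varepsilon_c\,\mathrm{sgn}_i(c)f_i^{\mathrm{out}}(c)$, Abel summation has vanishing boundary terms precisely because $W_i^+=W_i^-$ from the first step, leaving a combinatorial identity that should telescope against $\sum_c\varepsilon_c r_c$ by the same linking-number geometry.
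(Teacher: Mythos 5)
Your reduction of the base-value part is correct, and it is cleaner than what the paper does. Linearity of both sides in the potential $(r,\beta)$ is legitimate (the cycle relations constrain only $r$, so the lattice of valid potentials splits as a direct sum of its $r$-part and its $\beta$-part), and for $r\equiv 0$, $\beta_i=1$ the identity reduces to $\sum_{c:\,i\ \mathrm{over}}\varepsilon_c=\sum_{c:\,i\ \mathrm{under}}\varepsilon_c$, which holds because both sides compute $\sum_{j\neq i}\mathrm{lk}(i,j)$. The paper disposes of the $\beta$-dependence only informally (a sentence in the proof pointing to Example 6.2), so this piece of your proposal is a genuine improvement; note that it also silently uses realizability of the chord graph as a link projection, which is in fact essential to the whole statement.

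The gap is the $r$-part, which is the actual content of the proposition, and your proposal does not prove it. Your diagnosis of why the Proposition 6.1 strategy breaks is correct, and the situation is even worse than you state: not only does same-sign doubling change $\sum_c\varepsilon_c(a_c-b_c)$ by an extra $\varepsilon(a-b)$ while leaving $\sum_c\varepsilon_c r_c$ fixed, but the identity is already \emph{false} on the one-chord-per-pair abstract graph for general sign assignments, so the ``sign-enhanced analog of the linear-algebra lemma'' you invoke cannot exist. (Concretely: three circles with single chords colored $r_{12}=r_{23}=t$, $r_{13}=-t$ satisfy all cycle relations, and a direct computation gives $\sum_c\varepsilon_c(a_c-b_c)-\sum_c\varepsilon_c r_c=t(\varepsilon_{12}-\varepsilon_{23})$, nonzero when $\varepsilon_{12}\neq\varepsilon_{23}$.) With the base case gone, your entire argument rests on the replacement plan --- invariance under Reidemeister-II-type pairs and R-III moves inside the class of realizable signed chord graphs --- which you name but never execute. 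That plan is precisely the paper's proof: first the self-chord case, done by tracking each jump value along its component and showing its appearances at other crossings cancel because incoming-positive/outgoing-negative chords contribute $+r$ while moving the horizontal braid position right and the opposite chords contribute $-r$ while moving left; then the mixed-crossing case, done by changing crossings to a standard over/under pattern and verifying invariance under special R-II braid moves (where the cycle relation $r+s=0$ makes both sides change by $-2r$) and under R-III moves. None of these computations appear in your proposal, and the Abel-summation alternative is too schematic to evaluate (the claimed vanishing of boundary terms is asserted, not derived). As written, the heart of the proposition is a plan, not a proof.
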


\begin{proof} We will consider the equivalent statement for states on signed chord graphs. The argument will proceed as follows: First the claim is proved for part arc-graphs without chords joining distinct circles, but allowing distinct components. This corresponds to considering the horizontal multiplication of braids. Then we will argue that for general chord graphs we can assume for components $j>i$  that at all crossings of $j$ and $i$ the component $j$ will overcross component $i$. This corresponds to a change of the sign of the crossing. Finally we will argue that the equation to be proved remains valid under braid moves corresponding to specific Reidemeister II and III moves. This will extend the claim to the general link case. We will work in the free abelian group generated by variables $r$ representing the chord colors. Then we can define an element in the free abelian group for each circle segment representing a part-arc. 
The following figure shows a component with a self-chord colored $r$ and sign $\varepsilon $. We chose a specific orientation of the chord but the computation is similar if the orientation is reversed. 
\vskip 0.1in

\begin{center}
\includegraphics[width=0.7\textwidth]{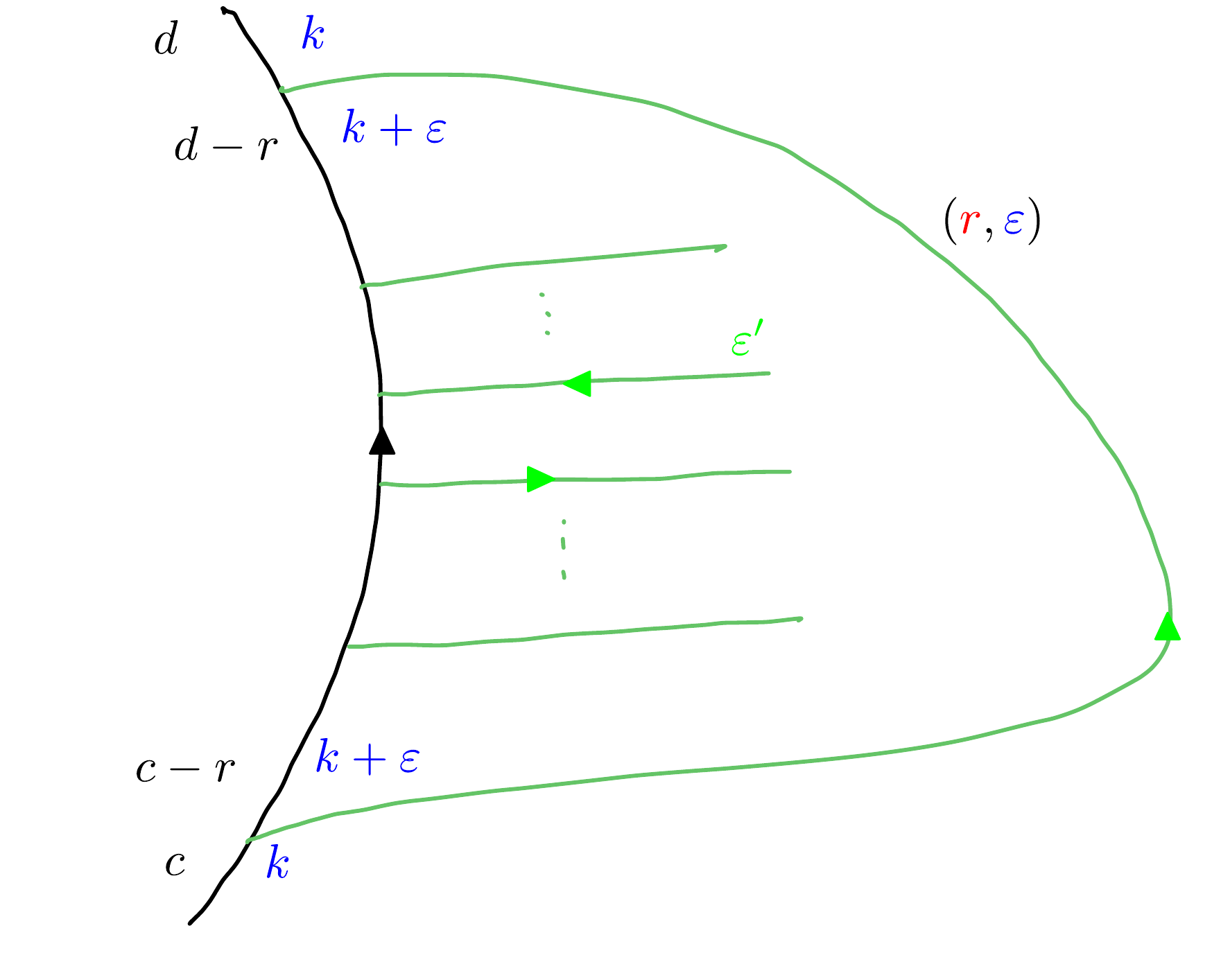}
\end{center}

\vskip -0.2in

\centerline{\textit{Figure 16}}

\vskip 0.1in

\noindent The contribution in the left sum for the chord labeled $r$ is $d-(c-r)=r+(d-c)$, with $c,d$ not containing any $r$, so the contribution of $r$ in $\sum \varepsilon (a-b)$ is $r$. Now the variable $r$ is carried along the segments following the orientation of the circle in the chord graph. But note that each incoming positive/outgoing negative chord will both contribute $+r$ and move the horizontal position to the right. In the same way each outgoing positive/incoming negative chord will contribute $-r$ while moving to the left. Because we start and end at $k+\varepsilon $ these contributions of $r$ will cancel. This proves the result in the knot case or more generally in the case of arbitrary braid closures where different components have no crossings. Please check some of the arguments given in the example tbelow to see e.\ g.\ that the choice of $\beta $ in the potential does not change the equation. 
In order to discuss the non-trivial link case note that we can always change crossings and assume that component $j$ always overcrosses component $i$ for $j>i$ (see the argument in section 4). It will suffice to show that the equation is preserved under special Reidemeister II braid moves involving distinct components, and arbitrary Reidemeister III braid moves. Consider the following Reidemeister II move of distinct components. 

\begin{center}
\includegraphics[width=0.9\textwidth]{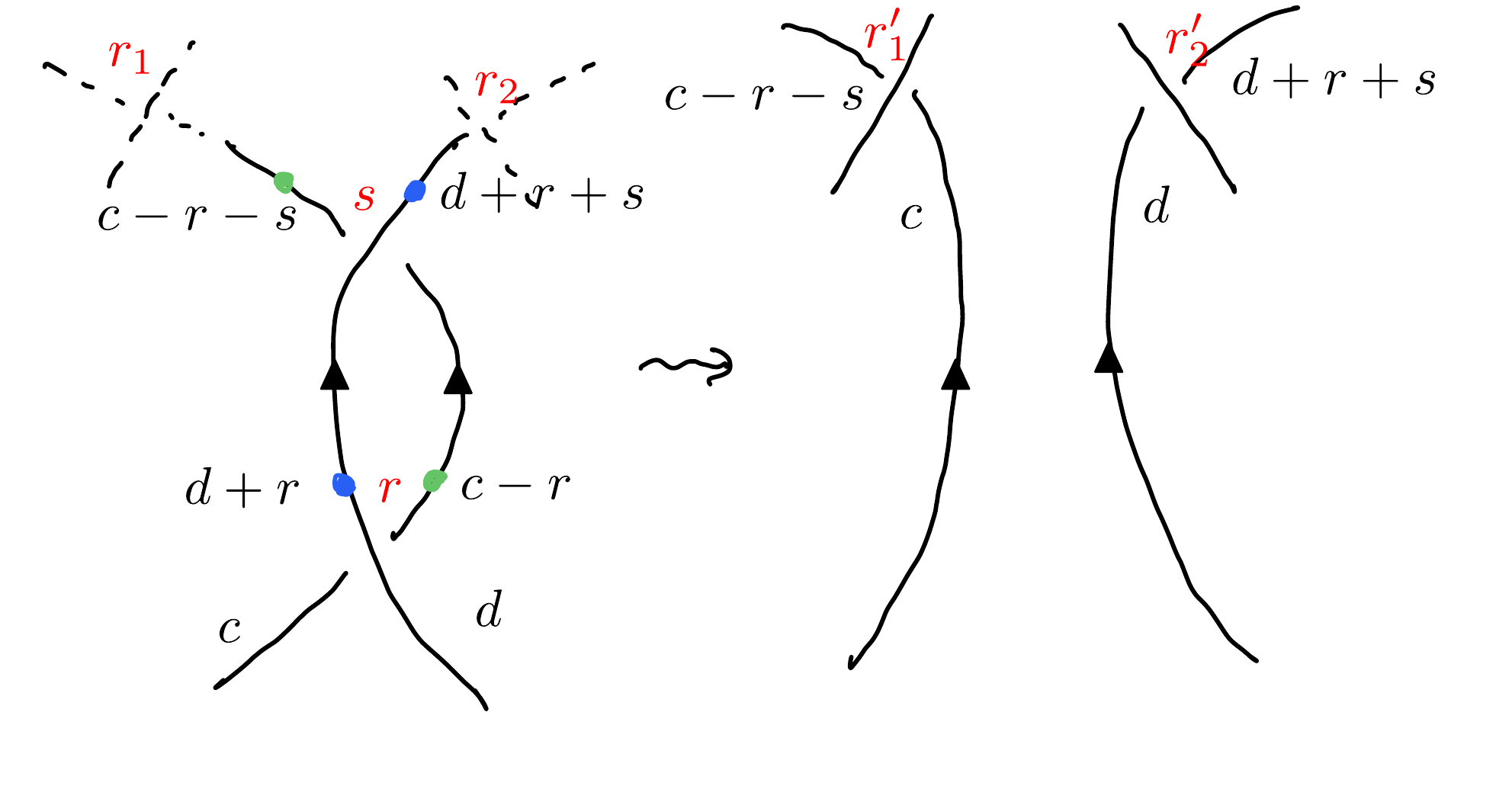}
\end{center}

\vskip -0.2in

\centerline{\textit{Figure 17}}

\vskip 0.1in

\noindent If one of the strands is not included at any further mixed crossings then the corresponding cycle relation is $r+s=0$. In the figure we have a bottom negative crossing and the top crossing is positive. If we calculate $\sum \varepsilon r$ the contribution will be $-r+s=2s=-2r$. The contribution in $\varepsilon (a-b)$ is 
$$-[(d+r)-(c-r)]+(d-c)=-2r.$$
If both components of the Reidemeister II move are included in crossings with distinct components then we can change $r_1,r_2$ to $r_1',r_2'$ such that the equation will hold for the new diagram. We ignore the consideration of any self-crossings here. Note that the signs at the crossing slabeled $r_1, r_2$ does not matter. Again it can be checked that the equation will hold for the right diagram if and only if it holds for the left diagram. The discussion of Reidemeister III moves is in fact easier because all what has to be made sure is that the positions of $r_1,r_2,r_3$ in a diagram before a move and after the move are chosen correctly so that the contributions coincide. The remaining details are left to the reader. 
\end{proof}

\begin{exmp} 
Figure 18 shows the projection of the long component $2$-component link closure for $\beta =\sigma_1\sigma_2^{-2}\sigma_1^{-1}\sigma_2$.
\begin{center}
\includegraphics[width=0.6\textwidth]{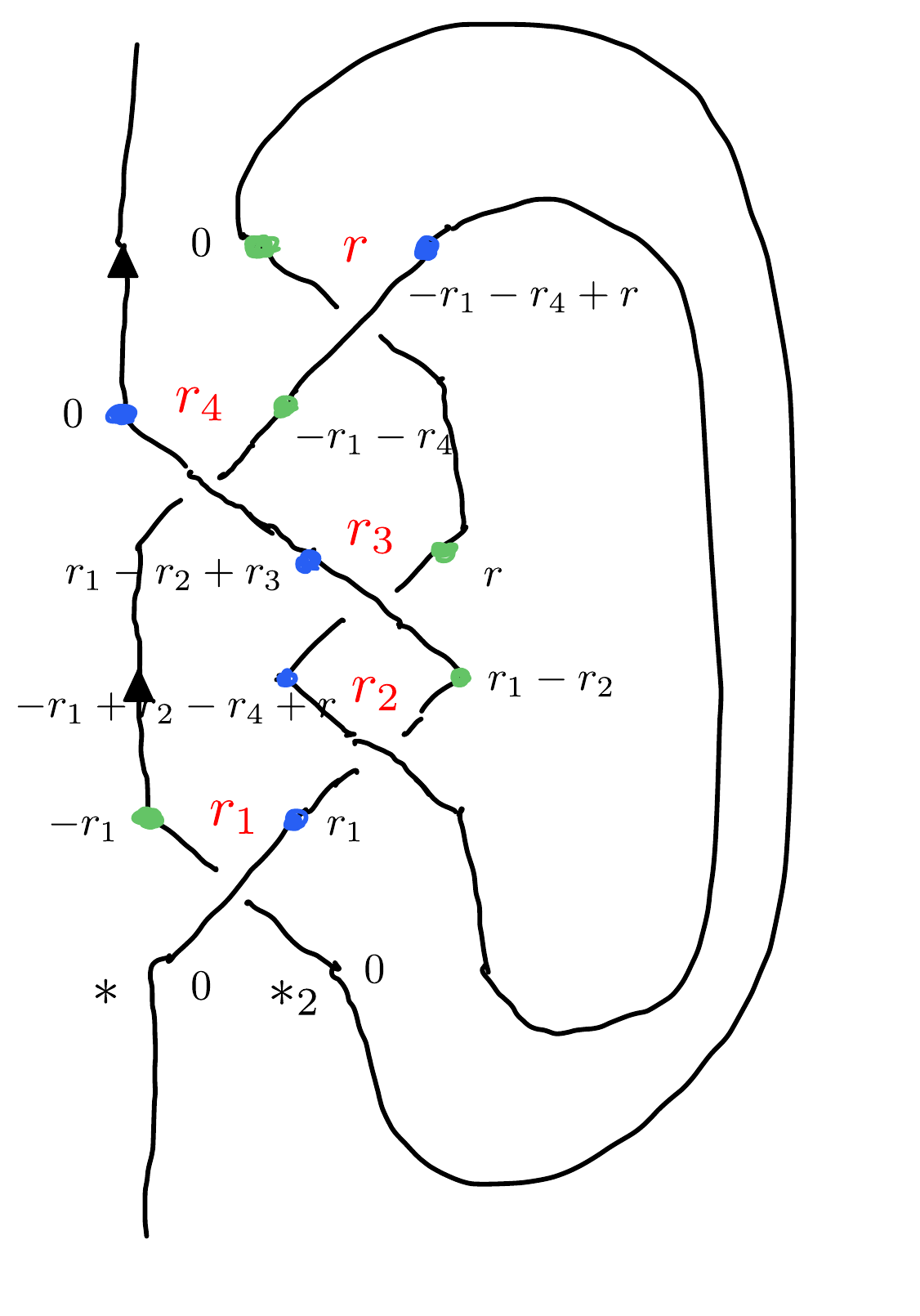}
\end{center} 

\vskip -0.2in

\centerline{\textit{Figure 18}}

\vskip 0.1in

The cycle relation is $r_1-r_2+r_3+r_4=0$. There are four crossings of the two components and one self-crossing of the second component. The base value of the part-arc at the base $*_2$ of the second component is set to $0$. It is easy to see that changing this value to $c\neq 0$ will not change the computation in the two sums considered in Proposition 6.2. Note that the signs at the five crossings from bottom to top are $(+,-,-,-,+)$. There will be a contribution of $c$ in $\sum \varepsilon (a-b)$ at each mixed crossing at which the second component crosses from the right and a $-c$ if it crosses from the left. In fact, the sign of $\pm c$ does not depend on the sign of the crossing. If we change the sign of a crossing then the roles of $a$ and $b$ in the computation change. At the self-crossing $c$ appears in both $a$ and $b$. So the reason is just that we \textit{always} can set $c=0$ is that the crossing number of the second component with the first component is even. 
We calculate 
\begin{align*}
\sum_c\varepsilon (a-b)=&+[r_1-(-r_1)] \\
&-[-r_1+r_2-r_4+r)-(r_1-r_2)]\\
&-[(r_1-r_2+r_3)-r]\\
&-[0-(-r_1-r_4)]\\
&+[(-r_1-r_4+r-0]\\
=&\ r_1-r_2-r_3+r_4+r\\
=&\ \sum_c \varepsilon r
\end{align*}
We were lucky in that we did not have to use the cycle relation again to get the correct result. Of course we already used it at the third crossing to simplify. Note that $r$ appears three times but the contributions from $a-b$ calculated at crossings different from the one colored $r$ cancel out.  
\end{exmp}
 
\vskip 0.1in

The statements in the proposition above are non-trivial. They apply to each state-sum contribution individually. Thus we get the following strengthened result of Garoufalidis and Loebl including the statement that the understanding that the two state-sums are equivalent in the sense of Definition \ref{defn: equivalency}. Now consider the arc-graph corresponding to $\mathcal{K}$ and the coloring $f$ from the set of edges into $\mathbb{N}_0$ corresponding to \ref{lem: flow lemma}.
The contributing states are given by restricting values to $f(v)\in \underline{n}$ for each vertex $v$ of $G\mathcal{K}$ (see \ref{defn:edge coloring}).

The quantity $-\frac{\varepsilon }{2}\sigma =bR$ is the \textit{local excess}, and the sum over all vertices is the \textit{excess} of the state.

The flow lemma identifies the coloring $b$ in Figure 10 with $f(e_v^b)$, and $f(e_v^r)$ is the jump $r$ in colors at a crossing $c$. Recall that overcrossing arcs are identified with vertices at their endpoints, so
$$\sum_c \varepsilon b=\sum_v \varepsilon (v) f(e_v^b).$$
Identifying $R=\sum_{e<e_v^r}f(e)$ we define
$$\textrm{exc}(f):=\sum_v\varepsilon (v)f(e_v^b)\sum_{e<e_v^r}f(e),$$
where we identity $R=\sum_{e<e_v^r}f(e)$ at each vertex. We use here that $R=a-r$.
Finally following the definition of rotation of a state $f$ in \cite{GL}, Definition B.6, we define
$$\textrm{rot}(f):=\sum_{i=2}^{\mathfrak{s}} \beta_i(s)=\sum_{e'\in T(e)}f(e').$$
Here $T(e)\neq \emptyset $ only for $e_v^b,e_v^r$ for vertices corresponding to braid generators 
at the top of the braid. For $e_v^b$ we have $T(e)=\{e_v^b\}$ while for $e_v^r$ we have 
$\{e: e<e_v^r\}\cup \{e_v^r\}$. In each case this determines the color of the part arc, which is a closure arc of the braid diagram. 
Recall that the $\beta_i$ are the colors on the part-arcs at the top (or bottom) of the braid-closure diagram.
See also the comments about rotation following the statement of Theorem 6.2.

Finally note that $\sum_v\varepsilon (v)=\omega (\mathcal{K})$ is the writhe of the projection. 
Let $\delta (f):=-(\textrm{exc}(f)+\textrm{rot}(f))$.
It follows now from equation \eqref{eq=vertex contribution} and \eqref{eq=Fterm} that each state has an addition \textit{total} multiplicative contribution of
\begin{equation} 
\delta '(\mathcal{K},n):=-\frac{n^2}{4}\omega (\mathcal{K})+\frac{n}{2}(\mathfrak{s}-1)
\end{equation}
Next recall that $f(v)=f(e_v^b)+f(e_v^r)$

For each braid projection $\mathcal{K}$ of a framed link $K$ and $\delta (\mathcal{K}), \delta (f)$ as above we let
$\mathcal{F}_{n}^*(G\mathcal{K})\subset \mathcal{F}_{n}(G\mathcal{K})$ be the set of $n$-flows, which will induce $0$ for the part arc framing of that part arc, which is the closure arc of the first braid string. This implies $f(e_w^b)=f(e)=0$ for $w$ the overcrossing arc preceding the overcrossing arc $v$ containing the bottom part arc of the first string. Here $e$ runs through all red edges preceding the $+$-marker on the fat vertex $v$ corresponding to $v$. If the closure part arc is on an overcrossing circle then the base value of this circle and all incoming red edge colorings are $0$. Now the following theorem has been proved:

\begin{thm} (see \rm{\cite{GL}} for $0$-framed long knot projections) \label{thm:main}
The framed colored Jones polynomial can be calculated from a braid projection $\mathcal{K}$ by the following state-sum:
\begin{align}\label{eq=main formula}
\begin{split}
J_n'(K)&=t^{\delta '(\mathcal{K},n)}\sum_{f\in \mathcal{F}_{n}^*(G\mathcal{K})}t^{\delta (f)}
\prod_{v}t^{n\varepsilon (v)f(e_v^b)}{f(v) \choose f(e_v^b)}_{t^{-\varepsilon (v)}}\cdot \\ &
\cdot \{n-\sum_{e<e_v^r}f(e)\}_{f(e_v^r),t^{\varepsilon (v)}}
\in \mathbb{Z}[t^{\pm 1/4}]
\end{split}
\end{align}

\noindent Moreover, each state-sum contributions for this state-sum corresponds to the state-sum contribution for the state-sum defined for $\mathcal{P}\mathcal{K}$ defined in \cite{G} under the bijection of states defined in the flow lemma \ref{lem: flow lemma}.  Thus the  $R$-matrix state model \rm{\cite{GL}} is equivalent to the state models on arc graphs \rm{\cite{GL}} as defined in 
\ref{defn: equivalency}.  \hfill{$\square$}
\end{thm}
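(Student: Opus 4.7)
The plan is to start from the $R$-matrix state-sum of Section 5 and rewrite it term-by-term into the form on the right-hand side of equation \eqref{eq=main formula}, using the flow lemma to transport states and the two geometric propositions of this section to absorb the awkward $t$-exponents. Concretely, I would begin from $J'_n(K)=\sum_{s\in\mathcal{F}^n_{-,*}}F(n,s)$ given by \eqref{eq=Fterm}, apply an orientation reversal to pass from the $(-)$-model to the $(+)$-model (the colored Jones polynomial is orientation-invariant; only the rotation-number and extrema conventions flip, which accounts for the signs in the definitions of $\textrm{rot}(f)$ and $\delta(f)$), and then invoke Lemma \ref{lem: flow lemma} to relabel each contributing $(+)$-coloring of $P\mathcal{K}$ as an $n$-flow $f$ on $G\mathcal{K}$. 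The condition that the first braid-strand be colored $0$ at the bottom translates to the restriction to $\mathcal{F}_n^*(G\mathcal{K})$.

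Next I would substitute the unified per-vertex formula \eqref{eq=vertex contribution} with $\rho=b+a-r$, $\sigma=-2bR$, $\tau=r(b-R)$, and $R=a-r$. Under the flow-lemma identifications $b=f(e_v^b)$, $r=f(e_v^r)$, $R=\sum_{e<e_v^r}f(e)$, and $f(v)=b+r$, the binomial becomes exactly ${f(v)\choose f(e_v^b)}_{t^{-\varepsilon(v)}}$ and the Pochhammer symbol becomes $\{n-R\}_{f(e_v^r),t^{\varepsilon(v)}}$, matching the per-vertex factors in \eqref{eq=main formula}. The remaining task is to show that the total $t$-exponent $\sum_v\bigl(\tfrac{\varepsilon}{2}(-n^2/2+n\rho+\sigma)-\tau/2\bigr)$ together with the extrema contribution $\prod_\ell v^{-n+2b_\ell(s)}$ (with signs adjusted by the orientation flip) equals $\delta'(\mathcal{K},n)+\delta(f)+\sum_v n\varepsilon(v)f(e_v^b)$.

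The two propositions of this section do all of this reassembly. The global $-n^2/2$ piece summed over crossings yields $-\tfrac{n^2}{4}\omega(\mathcal{K})$, the first term of $\delta'$. The extrema factor (after orientation flip) gives $\tfrac{n}{2}(\mathfrak{s}-1)$, the second term of $\delta'$, together with $-\textrm{rot}(f)$. The $\sigma$-contribution sums to $-\sum_v\varepsilon(v)f(e_v^b)R=-\textrm{exc}(f)$, the excess part of $\delta(f)$. The $\tau$-contribution is $-\tfrac{1}{2}\sum_c r(b-R)=\tfrac{1}{2}\sum_c r(a-b-r)=0$ by the first proposition of this section. Finally, the $n\rho$-piece splits via the identity $(b+a-r)/2=b+(a-b-r)/2$ into $n\sum_c\varepsilon b+\tfrac{n}{2}\sum_c\varepsilon(a-b-r)$; by Proposition \ref{prop=2} the second sum vanishes and the first is precisely $\sum_v n\varepsilon(v)f(e_v^b)$.

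The equivalence statement in the sense of Definition \ref{defn: equivalency} is then automatic: the flow lemma supplies a bijection $\phi_+$ between $\mathcal{F}^n_+(P\mathcal{K})$ and $\mathcal{F}_n(G\mathcal{K})$, and the rewriting above is carried out per vertex, so the $R$-matrix contribution at $s$ equals the arc-graph contribution at $\phi_+(s)$. The main obstacle I anticipate is purely bookkeeping: the $t$-exponent cancellations require invoking both propositions simultaneously on the same potential, and one must verify that the orientation flip transports the extrema factors with the signs that make $\delta(f)$ and $\delta'(\mathcal{K},n)$ come out exactly as defined in the statement. Everything else is a direct substitution from the flow lemma together with the $q$-algebra identities collected in Section 5.
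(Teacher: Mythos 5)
Your proposal is correct and follows essentially the same route as the paper: starting from the $R$-matrix sum \eqref{eq=Fterm}, flipping orientation to pass to the $(+)$-model, transporting states via the flow lemma, unifying $f_\pm$ into the single vertex contribution \eqref{eq=vertex contribution} via the $q$-algebra identities and binomial symmetry, and then using Proposition 6.1 to kill the $\tau$-term and Proposition \ref{prop=2} to convert the $n\rho$-term into $\sum_v n\varepsilon(v)f(e_v^b)$, with the $-n^2/2$, $\sigma$, and extrema pieces assembling into $\delta'(\mathcal{K},n)$ and $\delta(f)$ exactly as in the paper. The per-vertex/per-state nature of the rewriting giving equivalence in the sense of Definition \ref{defn: equivalency} is also how the paper concludes.
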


For simplicity we have given the proof of the Garoufalidis-Loebl formula \cite{GL} for braid projections. The proof immediately generalizes to all Morse projections of links. The main difficulty is the extension of rotation numbers to Morse projections following \cite{T}, see also \cite{GL}, Definition B.6. Each Morse projection defines a \textit{Gauss map} on $\mathcal{K}$ by the unit tangent vectors of the immersions along the components of $\mathcal{K}$. We know that the value $1\in S^1$ is a regular value with preimages the local maxima and minima. The Gauss map restricts to maps on part arcs and defines the \textit{rotation numbers} in $\{0,\pm 1\}$ for each part arc with $\pm 1$ according to right oriented maxima respectively minima \cite{W}. If the rotation number of a part arc is $\neq 0$ then we will indicate this by $\pm $ in the boundary of the fat vertex corresponding to the overcrossing arc containing that part arc. Note that there can be several part arcs with nontrivial rotation number on a given overcrossing arc. The inclusion of only right oriented extrema comes from 
the more general ribbon functor definition of $\textrm{sl}(2,\mathbb{C})$ because those correspond to the \textit{insertion} of right oriented evaluation and coevaluation maps. For each state on $G\mathcal{K}$, or $P\mathcal{K}$, the part arcs carrying nonzero rotation numbers will contribute factors of $t$ exponentiated by the part arc color for this state. In order to prove the formula \eqref{eq=main formula} we first consider special Morse projections for which at crossings all strings are directed upwards.
Special Morse projections have additional properties with respect to extrema, e.\ g.\ a right going maximum is immediately followed by a right or left directed minimum on the same part arc. Which situation occurs is given by the rotation number of that part arc. 
So the rotation data controls left-right position on a grid, which is necessary to generalize the proof of Proposition \ref{prop=2} beyond the case of braid projections. Finally we use that every Morse projection easily can be isotoped by planar isotopy into a special Morse projection using local rotations at crossings. It is easy to see that the state-sum contributions will not change under those rotations. 

\begin{rem}
\noindent (a) For $0$-framed links $K$, $J_n'(K)\in \mathbb{Z}[t^{\pm 1/2}]$. This follows easily by the change of variables in Examples 1 and 2 above. Recall that for each framed link $K$ and $r\in \mathbb{Z}$ we let $K(r)$ denote
the link with framing changed by $r$ twists. Then we have
$$J_n'(K(r))=t^{-r(\frac{n^2}{4}+\frac{n}{2})}J_n'(K)$$
For a framed link $K$ with projection and corresponding writhe $\omega (\mathcal{K})$ the writhe is the total framing, and 
$$J_n'(K(-\omega (\mathcal{K}))=t^{\omega (\mathcal{K})(\frac{n^2}{4}+\frac{n}{2})}J_n'(K).$$
The unframed (or better framing corrected) version of the colored Jones polynomial thus is computed from an arbitrary projection by the relation
$$J_n(K):=t^{\omega (\mathcal{K})(\frac{n^2}{4}+\frac{n}{2})}J_n'(K).$$
If we define
\begin{align*}
\delta (\mathcal{K},n)&:=\delta '(\mathcal{K},n)+\omega (\mathcal{K} )(\frac{n^2}{4}+\frac{n}{2})\\
&=\frac{n}{2}(\omega (\mathcal{K})+\mathfrak{s}-1))
\end{align*}
and replace $\delta '(\mathcal{K},n)$ by $\delta (\mathcal{K},n)$ we get the state-sum formula for the unframed colored Jones polynomial from the theorem. It follows that $J_n(K)\in \mathbb{Z}[t^{\pm 1/2}]$. 

For $n=1$ we have the skein relations for framed links:
$$t^{-1/4}J'(K_+)-t^{1/4}J(K_-)=(t^{-1/2}-t^{1/2})J(K_0)$$
and the corresponding writhe normalized for the unframed version:
$$t^{-1}J(K_+)-tJ(K_-)=(t^{-1/2}-t^{1/2})J(K_0).$$

This is not the standard relation for the Jones polynomial derived from the Kauffman bracket but differs by a relative sign of the two sides. It corresponds to the relation in \cite{GL}, Appendix B, and \cite{RT}, Theorem 4.2.1. Because the number of components of $K_{\pm}$ and $K_0$ differs by v$1$ modulo $2$ for each skein triple it follows that the corresponding invariants only differ by sign. 

In fact, for the standard skein relation the Jones polynomials of even number component links carry an additional $-$ sign if the normalization on the unknot is kept. Since the colored Jones polynomial can be calculated from $n$-parallels the sign correction will only occur for $n$ odd (like in the classical case).

In fact, the mod $2$ number of components is nicely related to $\delta (\mathcal{K},n)$ as follows: Let $K=\hat{\beta }$ and $|K|$ the number of components of $K$. We assume that $\beta $ is a $\mathfrak{s}$-braid. Note that the mod $2$ number of transpositions in the permutation carried by $\beta $ is equal to $\omega (\mathcal{K})$, while the number of cycles in the disjoint cycle decomposition is $K$. Let $\ell_i$ be the length of the $i$-th cycle and recall that each cycle of length $\ell_i$ can be written with $\ell-i-1$ transpositions. Thus we have the equations:
$$\sum_{i=1}^{|K|}(\ell_i-1)\equiv \omega (K), \quad \sum_{i=1}^{|K|}\ell_i=\mathfrak{s}$$
It follows that if $n$ is even, or $|K|+1\equiv \omega (\mathcal{K})+\mathfrak{s}-1\equiv 0\ \textrm{mod}\ 2$, or the number of components of $K$ is odd, then $\delta (\mathcal{K},n)\in \mathbb{Z}$ and it follows that 
$J_n(K)\in \mathbb{Z}[t^{\pm 1}]$. On the other hand, if $n$ and $|K|$ are odd then $J_n(K)\in t^{1/2}\mathbb{Z}[t^{\pm 1}]$.
This coincides with the skein relation deduced of $J=J_1$ above. 

\noindent b) For $sl(N+1)$ in the fundamental representations the quantum invariants determine the HOMFLY polynomial. In this case the \textrm{st}-condition translates to flows on $G\mathcal{K}$ satisfying the condition: If $f(e_v^r)\neq 0$ then $f(e_v^b)=\sum_{e<e_v^r}f(e)$ for $N$-flows.

\noindent (c) Let $\overline{K}$ be the reflected link and $\overline{\mathcal{K}}$ be correspondingly the reflected projection. 
Then $G_+\overline{\mathcal{K}}=\overline{(G_-\mathcal{K})}$, where the bar over the graph denotes switching the signs of the vertices.

Using this and $J_n(\overline{K})(t)=J_n(K)(t^{-1})$, it is easy to derive a formula calculating $J_n(K)$ from $G_-\mathcal{K}$.
It should be noted that the graphs $G\mathcal{K}$ and $G(\overline{\mathcal{K}})$ are usually very different as oriented graphs 
and thus give rise to very different state-sums calculating the same link polynomial, up to $t\mapsto t^{-1}$.
A trivial component contributes $t^{1/2}+t^{-1/2}$ for our skein relation but $-(t^{1/2}+t^{-1/2})$ in the standard case. 

\noindent (d) It should have become clear by now that the arc graph is just an equivalent representation of exactly the data necessary to be taken from the part arc-graph necessary to define either of the two state-sums. In fact, after labeling the overcrossing arcs in cyclic way all what is needed are the jumps at the crossings for $f(e_v^r)$ and the part-arc colors of outgoing undercrossing part arcs for $f(e_v^b)$. The real assertion of formula 6.2, or its generalization to Morse projection, is contained in Propositions 6.1 and 6.2, which are generalizations of Lemma B.5 in \cite{GL} or Lemma 2.1-2.4 in \cite{Li}. The main arguments for $n=1$ here are already contained in \cite{T}, sections 5 and 6. It seems that the \textit{integration} of certain state-sum contributions simplifies the computation using formula 6.2 over the usual $R$-matrix formula. The remaining steps are just rewriting of quantum algebra expressions.
\end{rem}

\section{Evaluation of the Garoufalidis-Loebl state-sum}

Note that the arc-graphs are \textit{abstract} enhanced graphs representing the data necessary to calculate the state-sum \eqref{eq=main formula}. For explicit computations it will be helpful to present the data in non-geometritc way so that it can easily be implemented as an algorithm.

Suppose that $G\mathcal{K}$ is defined from a Morse projection $\mathcal{K}$ without overcrossing circles. The case including overcrossing circles can easily be described but requires to consider vertices corresponding to the circles separately. For the moment we will work with non-reduced graphs.

The enhanced arc-graph $G\mathcal{K}$ can be encoded by a \textit{graph sextuple} $G'\mathcal{K}:=(V,\varepsilon ,\sigma ,\tau ,\mathfrak{o}, \mathfrak{r})$. Here $V$ is the set of vertices of $G\mathcal{K}$ and $\varepsilon $ is the function assigning the crossing sign to each vertex (i.\ e.\ overcrossing arc) of $G\mathcal{K}$.  At each vertex of $G\mathcal{K}$ we have a unique outgoing blue edge $e_v^b$ and red edge $e_v^r$. Let $\sigma (v)$ respectively 
$\tau (v)$ be the endpoints of those directed edges in $G\mathcal{K}$. This defines maps $\sigma ,\tau : V\rightarrow V$, with $\sigma $ a bijection. 
We call $\sigma $ the \textit{undercrossing map} and $\tau $ the \textit{jump map}. 

\begin{rem} $\tau $ is a bijection if and only of $\mathcal{K}$ is an alternating projection, in particular $\tau ^{-1}(v)\neq \emptyset $ for each vertex $v$ in this case, and the inverse map is defined.  
\end{rem}

Next, $\mathfrak{o}=(\mathfrak{o}_v)_{v\in V}$ is a total ordering $\mathfrak{o}_v$ on each of the sets $\tau^{-1}(v)$ for each $v\in V$. Note that the order $\mathfrak{o}_v$ also defines a total ordering on the set of all incoming edges of the fat vertex $v$ of $G\mathcal{K}$
with the convention that $e_{\sigma ^{-1}(v)}^b$ precedes $e_w^r$ for each $w\in \tau^{-1}(v)$.
 Also note that $V=\sqcup_v \tau^{-1}(v)$ is a disjoint union.
This is the ordering, which is used in the Pochhammer symbol in \eqref{eq=main formula}.
 If $\tau^{-1}(v)=\emptyset $ then 
the overcrossing arc corresponding to $v$ is a part arc. But if $\tau^{-1}(v)\neq \emptyset $ then a choice of element $w\in \tau^{-1}(v)$ determines the unique part arc on the boundary of the \textit{fat} vertex $v$ following the edge $e_w^r$, and thus a unique part arc of $P\mathcal{K}$. 

Finally, $\mathfrak{r}$ is given by two functions $\mathfrak{r}_{\sigma },\mathfrak{r}_{\tau }: V\rightarrow \{-1,0,1\}$. These are the sums of the rotation numbers of all part-arcs following the transitions $\sigma ,\tau $ from the vertex $v$. So for $\sigma $ this is the rotation number of the overcrossing arc $\sigma (v)$, for $\tau $ this is the sum of the rotation numbers of all part arcs following $v\in \tau^{-1}(\tau (v))$ in the order of $\tau^{-1}(\tau (v))$. Consider the left part in Figure 19 below, with $v$ replaced by $\tau (v)$. 
A non-decorated part arc will have rotation number $0$. See \ref{defn:arc-graph}.

\begin{rem} (a) Each part arc-graph or arc-graph determines a quintuple as above. But not each such quintuple will determine an arc-graph induced from a part arc-graph. 

\noindent (b) The unique cycle decomposition of $\sigma $ will determine the number of components of $\mathcal{K}$. Let $\mu $ be the number of cycles in this decomposition. 

\noindent (c) We let $e_{\tau^{-1}(v)}:=\{e_w^r:\tau (w)=v\}$, and this will be $\emptyset $ if $\tau^{-1}(v)=\emptyset $.

\noindent (d)  Note that at this point we will \textbf{not} work with any particular order on the vertex set $V$. 

\end{rem}

Suppose that the cycle length of the $i$-th cycle is $\mathfrak{c}(i)$ for $i=1,\ldots ,\mu $. Note that 
$\mathfrak{c}=\sum_{i=1}^{\mu }\mathfrak{c}(i)$ is the number of elements in $V$, and we let $V=\sqcup_{\ell =1}^{\mu }V_{\ell }$ be with $V_{\ell }$ be the set of vertices corresponding to the overcrossing arcs of the $\ell $-th component. 
In order to state the cycle equations from $G'\mathcal{K}$ we need to identify the overcrossings and undercrossings of the $\ell $-th component with all other components. In fact
$$\bigcup_{k\neq \ell }\bigcup_{v\in V_{\ell }} \tau^{-1}(v)\cap V_k$$
will be the set of vertices corresponding to overcrossings of some  overcrossing arc in $V_{\ell }$ with distinct components. 
Note that these are not vertices in $V_{\ell }$. 
Similarly
$$\{v\in V_{\ell }: \tau (v)\notin V_{\ell }\}$$
corresponds to the set of all crossings at which the component $\ell $ undercrosses a distinct component.
 
We will next describe how to use the data $G'\mathcal{K}$ to construct the set of contributing states on $G\mathcal{K}$, which we know in natural one-to-one correspondence with the contributing states on $P\mathcal{K}$ by the flow lemma \ref{lem: flow lemma}.
Next recall that a potential will be an ordering of the components and a choice of vertex $v^i$ for $i=1,\ldots \mu$ on each cycle of the cycle decomposition of $\sigma $, a function $\mathbf{j}: V\rightarrow \underline{n+1}$, and values $\beta_i\in \underline{n+1}$ for $i=1,2,\ldots ,\mu$. We assume that the function $\mathbf{j}$ satisfies the cycle equations. For a state $f$ of $G\mathcal{K}$ we use the notation $j_v:=f(e_v^r)$, $i_v:=f(e_v^b)$, $j_{\tau^{-1}(v)}:=\sum_{w,\tau (w)=v}f(e_w^r)$.
Then Figure 19 \label{fig=19} shows the local picture at each vertex: 

\begin{center}
\includegraphics[width=1.0\textwidth]{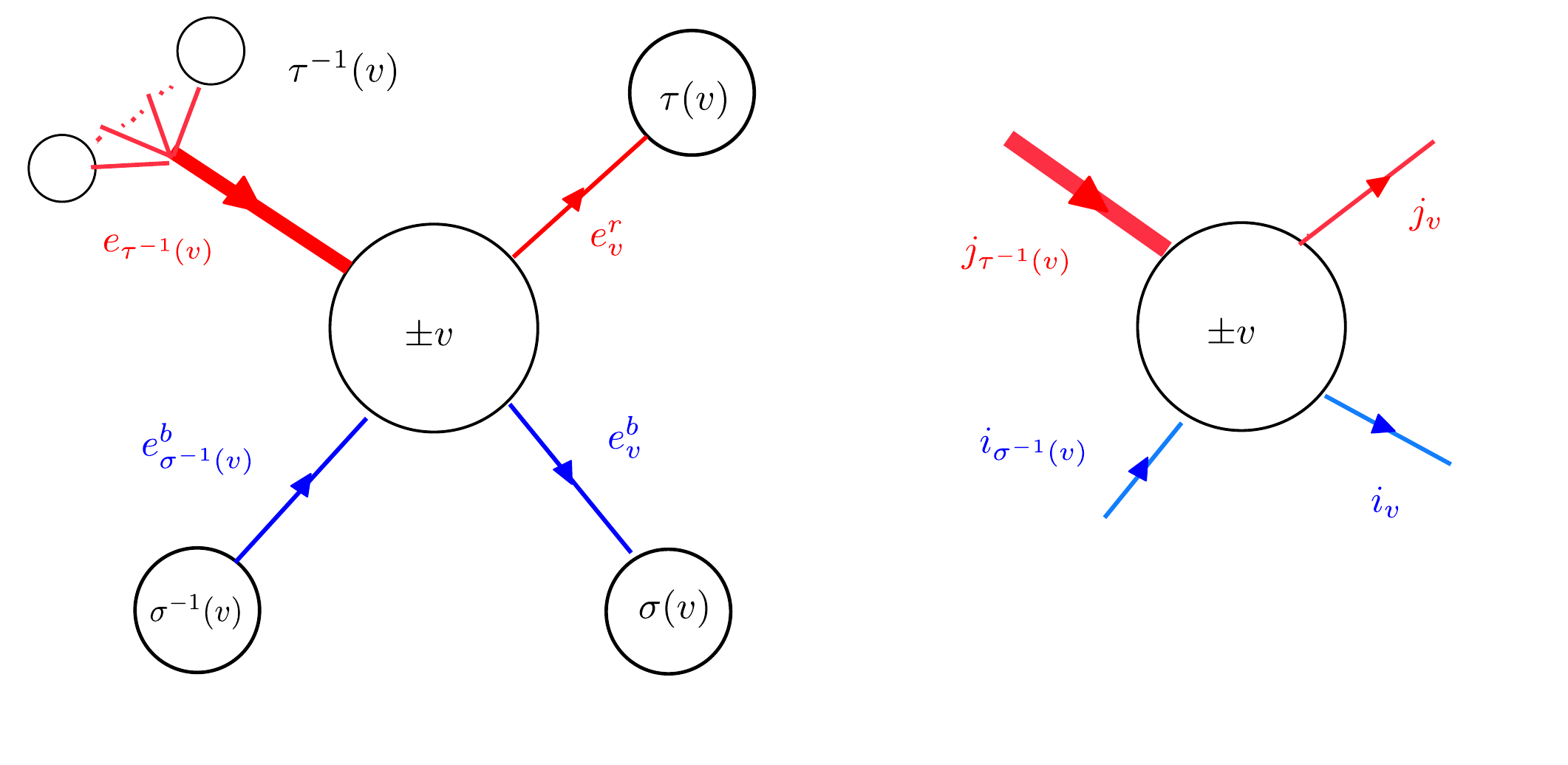}
\end{center} 

\vskip -0.2in

\centerline{\textit{Figure 19}}

\vskip 0.1in

The flow equation using the above notation now is
\begin{equation}\label{eq=local 1}
i_v+j_v=i_{\sigma ^{-1}(v)}+j_{\tau ^{-1}(v)}
\end{equation}

This is a recursion to calculate the state colorings from the potential, for each cycle of $\sigma $ at a time. 
We show how to use the recursion to find the function $\mathbf{i}: V\rightarrow \mathbb{N}_0$ in the cycle case.
Below we will show for a $2$-component example how to find the function $\mathbf{i}$ from the function $\mathbf{j}$ with base values $(0,\beta )$ for the two components. 

Now suppose $\mu =1$ and let $\mathfrak{c}=|V|$ be the number of crossings of $\mathcal{K}$. Then for each $v\in V$ let $k_v\in \underline{\mathfrak{c}}$ be the unique number such that $\sigma ^{-k_v}(v)=v^1$. Suppose that $i_{v^1}=\beta_1$ is given.  
Then 
\begin{equation}\label{eq=solved recursion}
i_v=i_{v^1}+\sum_{k=0}^{k_v-1}j_{\tau^{-1}\sigma^{-k}(v)}-j_{\sigma^{-k}(v)}
\end{equation}
The recursion will start with equation \eqref{eq=local 1}:
$$i_v=i_{\sigma^{-1}(v)}+j_{\tau^{-1}(v)}-j_v$$
and then substitute the recursion with $v$ replaced by $\sigma ^{-1}(v)$ to substitute for $i_{\sigma ^{-1}(v)}$:
$$i_v=i_{\sigma^{-2}(v)}+(j_{\tau^{-1}\sigma^{-1}(v)}-j_{\sigma^{-1}(v)})+(j_{\tau^{-1}(v)}-j_v)$$
After $k_v$ steps we have the solution for $v$. This solution is in fact compatible because if we apply to $v^1$ the recursion step 
$\mathfrak{c}$ times, recall that $\sigma^{-\mathfrak{c}}(v)=v$ since $\sigma^{\mathfrak{c}}=\textrm{id}$, then we get
$$i_{\sigma^{-\mathfrak{c}}(v^1)}=i_{v^1}+\sum_{k=0}^{\mathfrak{c}-1}j_{\tau^{-1}\sigma^{-k}(v^1)}-j_{\sigma^{-k}(v^1)}=i_{v^1}$$
This follows from
\begin{equation}\label{eq=cyclic}
\sum_{k=0}^{\mathfrak{c}-1}j_{\tau^{-1}\sigma^{-k}(v)}=\sum_{k=0}^{\mathfrak{c}-1}j_{\sigma^{-k}(v)}
\end{equation}
because $\{\sigma^{-k}(v):k=0,\ldots ,\mathfrak{c}-1\}=V$ and $V=\sqcup \tau^{-1}(v)$ as we observed above. 
Note that formula \eqref{eq=solved recursion} just formally states how to calculate the part arc colors from a potential, just following 
along a component. 
In the link case, the above discussion has to be done for each cycle of $\sigma $ separately, noting that the final argument uses the cycle conditions. In fact, in the process of computing the part arc color by following a component, overcrossings of distinct components are in the left sum while undercrossing a distinct component is in the right hand sum, and those contributions cancel for each component. Thus for each $v\in V$ such that $\sigma ^{-k_v}v=v^{\ell }$ with $\ell \in \{1,\ldots ,\mu\}$ we have 
\begin{equation}\label{eq=solved recursion link}
i_v=\beta_{\ell }+\sum_{k=0}^{k_v-1}j_{\tau^{-1}\sigma^{-k}(v)}-j_{\sigma^{-k}(v)}
\end{equation}
The contributing states are determined by the $n$-flow conditions \ref{defn:edge coloring} for colorings of $G\mathcal{K}$, and are
$$i_v,j_v\geq 0\ \textrm{and} \ i_v+j_v\leq n$$
for all $v\in V$. These conditions are equivalent, see the proof of \ref{lem: flow lemma} to the flows on $P\mathcal{K}$ being $n$-bounded, i.\ e.\ all part arc colors take values in $\{0,\ldots ,n\}$, see \ref{defn:edge coloring}.
Obviously the equations are satisfied if for all $v\in V$:
$$j_v\geq 0, i_{\sigma ^{-1}(v)}\geq 0 \ \textrm{and}\ i_{\sigma^{-1}(v)}+j_{\tau^{-1}(v)}\leq n,$$
because the part arc colors along the overcrossing arc $v$ are increasing from $i_{\sigma^{-1}(v)}$ to $i_{\sigma^{-1}(v)}+j_{\tau^{-1}(v)}$, then jump down to $i_v$. 
The last inequalities are equivalent to 
$$j_v, i_v\geq 0 \ \textrm{and}  \ i_v\leq n-j_v$$
So, using the recursion \eqref{eq=solved recursion link} we get the system of inequalities for each $v\in V$ in terms of the potential, for all $v^{\ell }\neq v\in V$, $\ell \in \{1,\ldots ,\mu\}$ depending on $v$,

\begin{equation}\label{eq=contributing}
\begin{aligned}
0&\leq j_v\leq \beta_{\ell }+j_{\tau^{-1}(v)}+\sum_{k=1}^{k_v-1}j_{\tau^{-1}\sigma^{-k}(v)}-j_{\sigma^{-k}(v)}\leq n \\
0&\leq j_{v^{\ell }},\beta_{\ell } \ \textrm{and}\ \beta_{\ell }+j_{v^{\ell }}\leq n \ \textrm{for the starting vertex}\ v^{\ell }
\end{aligned}
\end{equation}

Note that the part arc coloring preceding the base vertex on the $\ell $-th component is
$i_{\sigma^{-1}(v^{\ell })}+j_{\tau^{-1}(v^{\ell })}\leq n$ because $i_{\sigma^{-1}(v^{\ell })}+j_{\tau^{-1}(v^{\ell })}-j_{v^{\ell }}=\beta _{\ell }$ and the part arc color following the crossing $v^{\ell }$ is just $\beta_{\ell }$.

\begin{defn} A \textit{return loop} at the self-crossing $v\in V_{\ell }$ is a loop in $P\mathcal{K}$ starting at vertex $v$ on the overcrossing arc in the direction of the orientation, following the link until it returns to $v$. The number of overcrossing arcs except the one at which the loop starts is the number $k$ (modulo $\mathfrak{c}(\ell )$) in $\sigma^k\tau (v)=v \Longleftrightarrow v=\tau^{-1}\sigma^{-k}(v)$. For $k=0$ the return loop follows an overcrossing arc until its return to $v$. This is a circle in $P\mathcal{K}$, which could be eliminated by isotopy, 
\end{defn}

Figure 20 shows three return loops. 

\begin{center}
\includegraphics[width=1.0\textwidth]{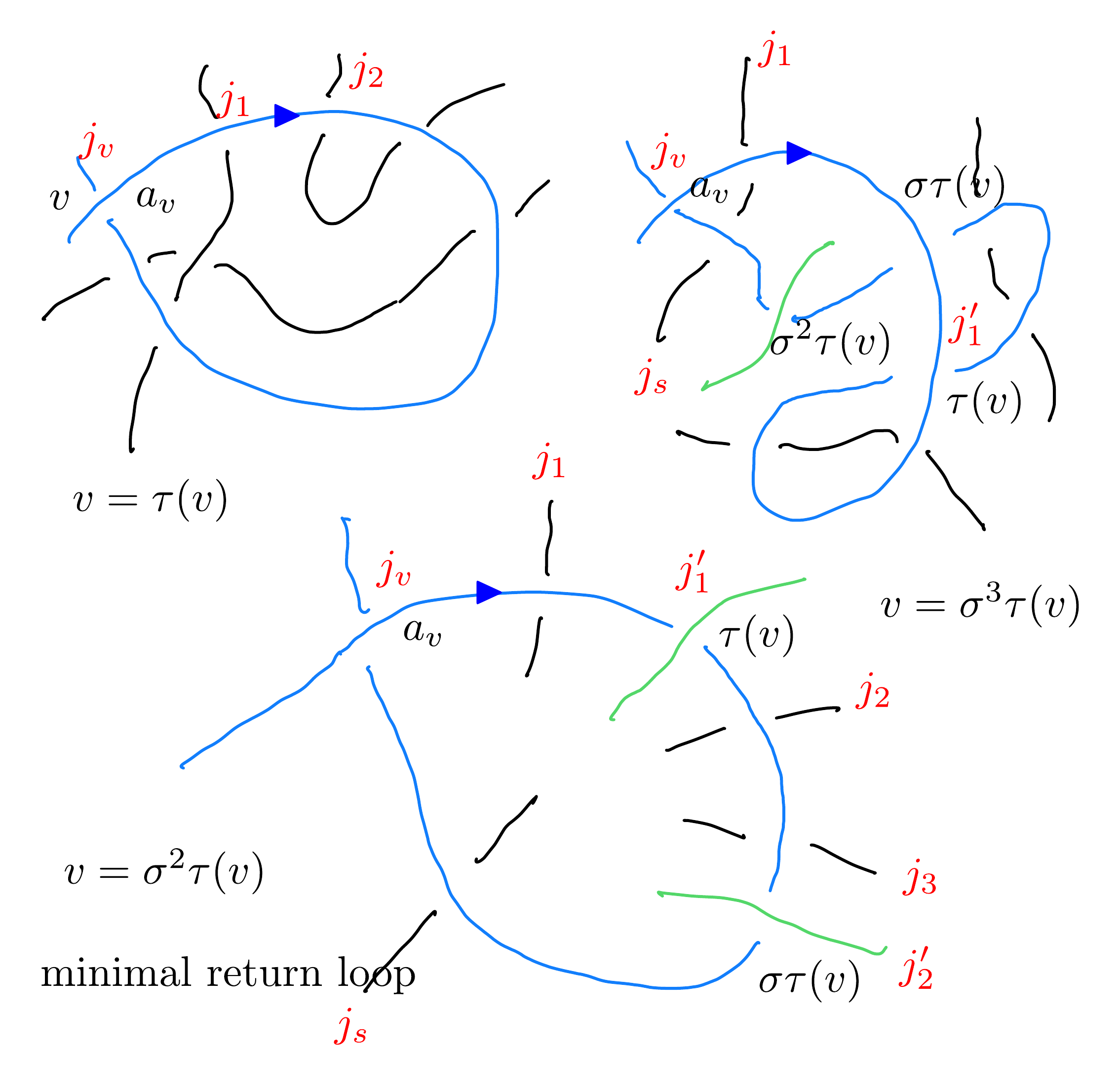}
\end{center} 

\vskip -0.2in

\centerline{\textit{Figure 20}}

\vskip 0.1in

Note that for $k=0$ the inequality
$$j_v\leq a_v+j_1+\ldots +j_s\leq n$$
holds. For $k>0$ such an equation is
$$j_v\leq a_v+j_1+\ldots j_s-(j_1'+\ldots j'_{k}\leq n$$
Of $j_s=j'_t$ can occur (like $j_2=j_1'$ in the right figure of Figure 20). 
Then there will be a shorter return loop contained in the original loop. 
A \textit{minimal return loop} is a loop, which does not contain shorter return loops. 
The system of inequalities for contributing states seems to contain geometric information about return loops
in the part arc graph. The study of the geometry of return loops reminds of the purely inductive arguments in \cite{LiMi} proving the existence of the Homflypt polynomial. 

\vskip 0.1in

In order to determine $\mathcal{F}_n^*G\mathcal{K}$ in \ref{thm:main} for a long component projection, note that the part arc color $0$ on the part arc corresponding to the long half arcs in general does not imply $\beta_1=0$ if we take for the base vertex on the first component, which we choose to be the long one, but in fact it will imply that the vertex preceding this bottom vertex has color $0$. This follows because overcrossing can only increase the colors. So depending on the projection this will imply that the jumps are zero up this part-arc. To simplify, and actual be able to work with the reduced graph we assume that the first crossing of the bottom long arc is an undercrossing and is the base vertex of the first cycle. This will imply $\beta_1=0$ because at the base crossing the value can only decrease. This implies $j_{v^1}=0$ and by the flow condition of course also that $i_{\sigma^{-1}(v^1)}=j_{\tau^{-1}(v^1)}=0$. Note that $j_{\tau^{-1}(v)}=0 \Longleftrightarrow j_w=0$ for all $w\in \tau^{-1}(v)$. But note that 
$i_w\neq 0$ is possible for $w\in \tau^{-1}(v)$ in this case (and thus also $i_{\sigma ^{-1}(w)}$ and $j_{\tau^{-1}(w)}$ can be nonzero for $w\in \tau^{-1}(v)$ in this case), so these vertices cannot be discarded. 
If we delete the base vertex on the long component including all edges of $G\mathcal{K}$ incident to that vertex we will not change state contributions because $0$-colorings factor in as multiplication by $1$. While computationally comfortable, some of the symmetry is lost because $\tau $ is no longer defined 
for the vertices in $\tau^{-1}(v^1)$. Also $\sigma $ will no longer be defined on $\sigma ^{-1}(v^1)$ if we delete the base vertex $v^1$. In all recursively defined formulas we will still include the base vertex $v^1$ so that the maps $\sigma ,\tau $ are always defined. 

Let
$V^*:=V\setminus (\{v^1\}\cup \tau^{-1}(v^1))$ and $V':=V\setminus \{v^1\}$. Then we can parametrize the set of contributing states as follows:
\begin{equation}\label{eq=contributing states}
\mathcal{F}_n^*G\mathcal{K}\leftrightarrow \{\mathbf{j}:=(j_v,\beta_{\ell }): v\in V^*,\ell =2,\ldots \mu, \eqref{eq=contributing}, \textrm{cycle relations} \ \ref{defn=potential} \}
\end{equation}
In order to write formula \ref{thm:main} for the data $G'\mathcal{K}$ from a long component projection we need to determine 
the contributions from right oriented extrema. For this let $v\in V^*$ define
\begin{equation}\label{eq=rot}
\textrm{rot}(v,\mathbf{j})=\mathfrak{r}_{\tau }(v)j_v+\mathfrak{r}_{\sigma }(v)i_v
\end{equation}
and
$$\textrm{rot}(\mathbf{j}):=\sum_{v\in V^*}\textrm{rot}(v,\mathbf{j})$$

\begin{rem}
In order to see that $\textrm{rot}(\mathbf{j})$ for a state determined by $\mathbf{j}$ and Definition \eqref{eq=rot} determines the contributions resulting from right oriented extrema, consider first a right oriented extremum on an overcrossing arc $\sigma (v)$, and assume that this is the only right oriented extremum on the overcrossing arc. The part arc color of the part arc containing the maximum is given by 
$i_v+\sum j_w$, where the sum runs through those $w\in \tau^{-1}(\sigma (v))$ preceding the maximum.  Recall from above that 
a choice of element in $\tau^{-1}(v)$ determines a unique part arc if $\tau^{-1}(v)\neq \emptyset $, and otherwise there is only one part-arc on the given overcrossing arc. The contribution $i_v=\mathfrak{r}_{\sigma }(v)i_v$, while the $j_w$ are contributions $\mathfrak{r}_{\tau }(w)j_w$ for the $w$ along the overcrossing arc $\sigma (v)$ preceding the maximum. It is not hard to see that 
the result for the rotation of a state persists, even though contributions for individual extrema will show up differently in the formula for $\textrm{rot}(\mathbf{j})$ given above. For example, if right oriented maximum is followed by an right oriented minimum, then 
the part arc color before the maximum will count with factor $+1$ while the part arc color before the minimum will contribute with factor $-1$ in the definition by ribbon functor \cite{L}. In the formula for $\textrm{rot}(\mathbf{j})$ given above this will be counted correctly because in this case, if no further right oriented extrema follow on the part arc considered, the rotation numbers of part arcs preceding the maximum will be $0$.
In the braid projection case this cannot occur but it is possible to have overcrossing arcs rotating through several maxima, in which case the above analysis similarly proceeds. 
\end{rem}

Finally the last item to be considered is the excess of a state in terms of the potential. We will use the following definition for $v\in V$:
\begin{equation}\label{eq=tilde}
\tilde{j}_v:=\sum_{\substack{w\in \tau^{-1}(\tau (v))\\ w<v}}j_w+i_{\sigma^{-1}(\tau(v))}
\end{equation}
Then 
\begin{equation}\label{eq=excess}
\textrm{exc}(\mathbf{j}):=\sum_{v\in V^*}\varepsilon (v)i_v\tilde{j}_v
\end{equation}

Let
\begin{equation}
\delta (\mathbf{j}):=-(\textrm{exc}(\mathbf{j})+\textrm{rot}(\mathbf{j}).
\end{equation}

We can now restate Theorem \ref{thm:main} using the graph sextuple $G'\mathcal{K}$. Note that this description does not use any ordering of the vertex set $V$. For a long component projection $\textrm{rot}(\mathcal{K})$ let the \textit{rotation number} of $\mathcal{K}$ be the sum of the rotation numbers of all components. This is a sum over the rotation numbers of all part arcs and thus is also the sum of the rotation numbers of all circle components resulting from smoothing all crossings. Let 
\begin{equation}\label{eq=delta'}
\delta '(\mathcal{K},n):=-\frac{n^2}{4}\omega (\mathcal{K})+\frac{n}{2}\textrm{rot}(\mathcal{K}).
\end{equation}
Note that our rotation numbers are positive in clockwise direction. 

\begin{thm}\label{thm:GL2}
Let $\mathcal{K}$ be a long component projection of an oriented link $K$ with data quintuple $G'\mathcal{K}$ and $\delta '(\mathcal{K},n)$ defined in equation \eqref{eq=delta'}. Let
$$\beta_n(\mathbf{j})=\prod_{v\in V'}\beta_n(\mathbf{j},v)$$
for $\mathbf{j}\in \mathcal{F}_n^*G\mathcal{K}$
as defined in \eqref{eq=contributing states}, and for each $v\in V^*$
\begin{equation}\label{eq=vertex contribution}
\beta_n(\mathbf{j},v)=t^{n\varepsilon (v)i_v}{i_v+j_v \choose i_v}_{t^{-\varepsilon (v)}} \{n-\tilde{j}_v\}_{j_v,t^{\varepsilon (v)}}
\end{equation}
with $\tilde{j}_v$ defined in \eqref{eq=tilde}.
Then 
\begin{equation}\label{eqn=GL2}
J_n'(K)=t^{\delta '(\mathcal{K},n)}\sum_{\mathbf{j}\in \mathcal{F}_n^*G\mathcal{K}}t^{\delta (\mathbf{j})}\beta_n(\mathbf{j})
\end{equation}
\end{thm}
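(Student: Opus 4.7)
The plan is to reduce Theorem \ref{thm:GL2} to Theorem \ref{thm:main} by checking, term by term, that the data extracted from the graph sextuple $G'\mathcal{K}=(V,\varepsilon,\sigma,\tau,\mathfrak{o},\mathfrak{r})$ reproduces exactly the summand of \eqref{eq=main formula} after the local identifications $i_v=f(e_v^b)$ and $j_v=f(e_v^r)$. The whole theorem is a computational repackaging of the already-proven state sum, augmented by the observation of how the global power and the rotation contributions extend from braid projections to long component Morse projections.

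First I would verify the parametrization of states. By the flow lemma \ref{lem: flow lemma} each flow in $\mathcal{F}_n(G\mathcal{K})$ corresponds to an $n$-bounded $(+)$-flow on $P\mathcal{K}$, and such flows are in bijection with potentials in the sense of Definition \ref{defn=potential}. The explicit recursion \eqref{eq=solved recursion link} solves the local flow equation \eqref{eq=local 1} along each $\sigma$-cycle, and the compatibility of the recursion under $\sigma^{\mathfrak{c}(\ell)}=\mathrm{id}$ is precisely the cycle relation \eqref{eq=cyclic}. The $n$-flow constraints $0\le i_v,j_v$ and $i_v+j_v\le n$ translate into the inequality system \eqref{eq=contributing}, and the additional reduction by $v^1$ and $\tau^{-1}(v^1)$ in passing from $V'$ to $V^*$ is justified in the paragraph on long component projections, where the bottom undercrossing forces $\beta_1=0$, $j_{v^1}=0$ and $j_w=0$ for $w\in\tau^{-1}(v^1)$, so these vertices contribute the trivial factor $1$ to \eqref{eq=main formula}.

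Next I would match the local vertex factor. The ordering $\mathfrak{o}_{\tau(v)}$ on $\tau^{-1}(\tau(v))$, together with the convention that $e_{\sigma^{-1}(\tau(v))}^b$ precedes all incoming red edges, identifies the quantity $\sum_{e<e_v^r}f(e)$ in \eqref{eq=main formula} with $\tilde{j}_v$ defined in \eqref{eq=tilde}. With this identification, \eqref{eq=vertex contribution} is literally the vertex contribution in \eqref{eq=main formula}. The definition of $\mathrm{exc}(\mathbf{j})$ in \eqref{eq=excess} then equals $\sum_v\varepsilon(v)f(e_v^b)\sum_{e<e_v^r}f(e)$, matching the excess from section 6.

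For the global powers, the braid version gave $\delta'(\mathcal{K},n)=-\tfrac{n^2}{4}\omega(\mathcal{K})+\tfrac{n}{2}(\mathfrak{s}-1)$, and the passage to Morse and long component projections follows the discussion after Theorem \ref{thm:main}: one first reduces to a special Morse projection by planar isotopy at crossings (which does not affect the state-sum contributions), and then the factors $v^{-n+2b_\ell(s)}$ of equation \eqref{eq=Fterm} get replaced by a product of $t^{n\cdot(\mathrm{rot}\text{-number})\cdot(\text{part arc color})/2}$ contributions at right oriented extrema, as explained for the ribbon functor. Collecting the $\tfrac{n}{2}$ coefficient globally yields $\tfrac{n}{2}\mathrm{rot}(\mathcal{K})$ as in \eqref{eq=delta'}, while the state-dependent part is precisely $\tfrac{n}{2}\mathrm{rot}(\mathbf{j})$, and I would verify using Definition \ref{defn:arc-graph}(b) that \eqref{eq=rot}, summed over $V^*$, accounts correctly for each right oriented extremum exactly once, noting the cancellation of contributions at sequences of right oriented extrema on a single part arc.

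The main obstacle is this last bookkeeping: showing that $\mathrm{rot}(\mathbf{j})$ built from the per-vertex data $\mathfrak{r}_\sigma(v),\mathfrak{r}_\tau(v)$ recovers the total rotation contribution of the ribbon functor. The delicate points are (i) that at a vertex $v$, the part arc color just after $v$ is $i_v$ while the part arc color just after the $w$-th incoming red edge at $\tau(v)$ is $i_{\sigma^{-1}(\tau(v))}+\sum_{w'\le w}j_{w'}$, so the rotation labels on those part arcs must be assigned to the correct coefficients $\mathfrak{r}_\sigma,\mathfrak{r}_\tau$; and (ii) that grouping right oriented maxima and minima on a single part arc cancels in pairs, which is exactly the normalization built into \ref{defn:arc-graph} so that each part arc carries at most one $\pm$. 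Once these are checked, \eqref{eqn=GL2} follows by substitution into \eqref{eq=main formula}.
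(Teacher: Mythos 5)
Your proposal is correct and follows essentially the same route as the paper: Theorem \ref{thm:GL2} is there obtained as a restatement of Theorem \ref{thm:main} in the language of the graph sextuple, with the state parametrization handled by the recursion \eqref{eq=solved recursion link} and inequalities \eqref{eq=contributing}, the identification of $\tilde{j}_v$ with $\sum_{e<e_v^r}f(e)$, and the rotation bookkeeping addressed in the remark following \eqref{eq=rot} together with the special-Morse-projection discussion after Theorem \ref{thm:main}. The two delicate points you single out (assignment of part arc colors to $\mathfrak{r}_\sigma,\mathfrak{r}_\tau$ and the pairwise cancellation at consecutive right oriented extrema) are exactly the ones the paper's remark verifies.
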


\begin{rem} (a) The appearence of $\frac{n}{2}\textrm{rot}(K)$ is a consequence of evaluation maps at extrema, see e.\ g.\ the definition of the endomorphism $K$ in \cite{G}, p.\ 1264 or Theorem 5.3.2 in \cite{CP}.

\noindent (b) Note that the representation of the colored Jones polynomial in \ref{thm:GL2} references only the vertex set of $G\mathcal{K}$, which is identified with the crossings of $\mathcal{K}$ and defined by overcrossing arcs of $P\mathcal{K}$. 
We did reduce the reference to edge colorings to the computation of blue edge colors from the potential. 

\noindent (c) Note that the Potts model on a graph is defined from colorings of vertices. Even though the 

\noindent (d) We have purposefully not introduced any ordering of vertices in this section. Given an ordering and basing of the components of a projection there is a natural \textit{cyclic ordering} by running along components starting from the basepoint. Also, if the projection is a closed braid projection in the plane starting from a given braid word then the crossings are naturally ordered by each braid generator corresponding to a crossing. If $\mathcal{O}$ is any ordering of $V$ then there is a natural $\mathcal{K}$-\textit{twisting} of this order given by the orderings $\mathfrak{o}_v$ for each $v\in V$. If $\mathcal{O}$ orders the vertices as 
$\{v_1,\ldots ,v_{\mathfrak{c}}\}$ then the ordering is given by the elements in $\tau^{-1}(v_i)$ ordered by $\mathfrak{o}_{v_i}$ as above preceding the elements in $\tau^{-1}(v_{i+1})$ ordered by $\mathfrak{o}_{v_{i+1}}$. This could be related to Definition 5.1 in \cite{GL}.

\noindent (e) If $\mathcal{K}$ is an alternating link projection then, in the non-reduced description,  $\tau^{-1}(v)$ consists of a single element and the orderings $\mathfrak{o}_i$ are trivial. In particular in the alternating case the $\tilde{j}_v$ reduces to $\tilde{j}_v=i_{\sigma^{-1}(\tau (v))}$ so that the Pochhammer symbol and also the excess are easier to calculate. 
\end{rem}

\section{Working with cyclic ordering}

In the following we assume that our projection $\mathcal{K}$ is based and we consider the cyclic ordering with respect to the basing. 
This defines a bijection $V\leftrightarrow \underline{\mathfrak{c}-1}$ with $\mathfrak{c}$ denoting the number of crossings. The base vertex on the first component is labelled $0$ by the order because it is deleted in reduced graphs. We will usually identify vertices with their order labeling. The parametrization of the set of states is now given by integer valued vectors
$\mathbf{j}=(j_0,\ldots ,j_{\mathfrak{c}-1})$, possibly with components removed, or just set equal to $0$, in the reduced case. The vector $\mathbf{i}=(i_0,\ldots ,i_{\mathfrak{c}-1})$ is similarly defined. 

With respect to this ordering the undercrossing map has a simple description $\sigma $ is now given by 
$$\sigma (k)=k+1\ \textrm{mod}\ (\mathfrak{c}-1),$$
where we will as usual think of $a \textrm{mod}\ b$ for integers $a,b$ and $b>0$ as an element in $\underline{b}=\{0,1,\ldots ,b-1\}$. 
Theorem \ref{thm:GL2} can be rewritten, now writing $i_k, j_k,\varepsilon (k)$ etc.\ for $i_v, j_v,\varepsilon (v)$ if $v$ is the vertex $k$. The jump map $\tau $ now becomes a map 
$$\tau : \underline{\mathfrak{c}}\rightarrow \underline{\mathfrak{c}}.$$
Recall that this a bijection if and only if $\mathcal{K}$ is an alternating projection. 

In the case of a link of $\mu >1$ components some more notation will be necessary.
Suppose that the number of overcrossing arcs on the $\ell $-th component is $\mathfrak{c}(\ell )$. Then define
$$\mathfrak{d}(\ell ):=\mathfrak{c}(1)+\cdots +\mathfrak{c}(\ell )$$
for $\ell =1,\ldots ,\mu$, and let $\mathfrak{c}(0):=0$. 
Then we enumerate the overcrossing arcs on the $\ell $-th components in the following way
$$\mathfrak{d}(\ell -1),\mathfrak{d}(\ell -1) +1,\ldots ,\mathfrak{d}(\ell )-1$$

In the reduced case we have $i_0=0$ and  $i_{\mathfrak{c}(\ell -1)}=\beta_{\ell }$ is determined by the potential for $\ell =2,\ldots ,\mu $. As explained in section 7 we can determine the colors $i_k$ from the vector 
$$\mathbf{j}=(j_1,\ldots ,j_{\mathfrak{c}-1})$$
Note that he vector is partitioned by components into $\mathbf{j}=(\mathbf{j}(1),\ldots ,\mathbf{j}(\mu ))$ with
$$\mathbf{j}(1)=(j_1,\ldots ,j_{\mathfrak{d}(1)-1})$$
and for $\ell \geq 2$,
\begin{equation}\label{eq=comp}
\mathbf{j}(\ell )=(j_{\mathfrak{d}(\ell -1)},\ldots ,j_{\mathfrak{d}(\ell )-1})
\end{equation}

We always have to take into account the $\mu -1$ cycle relations. In order to determine these we have to first 
find for each $1\leq \ell \leq \mu$ those numbers $k$ for which the corresponding overcrossing arc ends at a crossing of distinct components, say for a fixed $\ell $ these are $k_1,\ldots ,k_{\rho }$. Finally we have to determine for which overcrossing arcs $k$, $\tau^{-1}(k)$ is an overcrossing arc of a component distinct from the $\ell $-th, say those are
$m_1,\ldots ,m_{\rho'}$.
Then the cycle relation for the $\ell $-th component is
$$\sum_{r=1}^{\rho '}j_{\tau^{-1}(m_r)}-\sum_{r=1}^{\rho }j_{k_r}=0$$
Note that $\rho +\rho '$ is always an even number.
The system of equations to find $\mathbf{i}$ from $\mathbf{j}$, given $i_{\mathfrak{c}(k-1)}$ for $k=2,\ldots ,\mu $ is given by the flow equations, equalities to determine the contributing states then are found as explained in section 7. 

We will now briefly show how to find the set of contributing states for a $2$-component alternating braid projection $\mathcal{K}=\hat{\beta }$.
We assume that the first appearance of the braid generator $\sigma _1$ will be corresponding to vertex $0$ and comes as $\sigma_1^{-1}$. This will allow us to work with the reduced graph. 
Let $\tau^{-1}(0)=:\kappa $. 

We have crossings $0,1,\ldots ,\mathfrak{c}(1)-1$ along the first component, and \\
$\mathfrak{c}(1),\ldots ,\mathfrak{c}-1$ with 
$\mathfrak{c}=\mathfrak{c}(1)+\mathfrak{c}(2)$ along the second component, the number of crossings of $\mathcal{K}$. 
The potential is given by $(\mathbf{j},\beta_2)$ and we assume that $\beta_2=i_{\mathfrak{c}(1)}$.
Fiigure 21 shows the arc-graph for a $2$-component alternating link.

\vskip 0.1in

\includegraphics[width=0.9\textwidth]{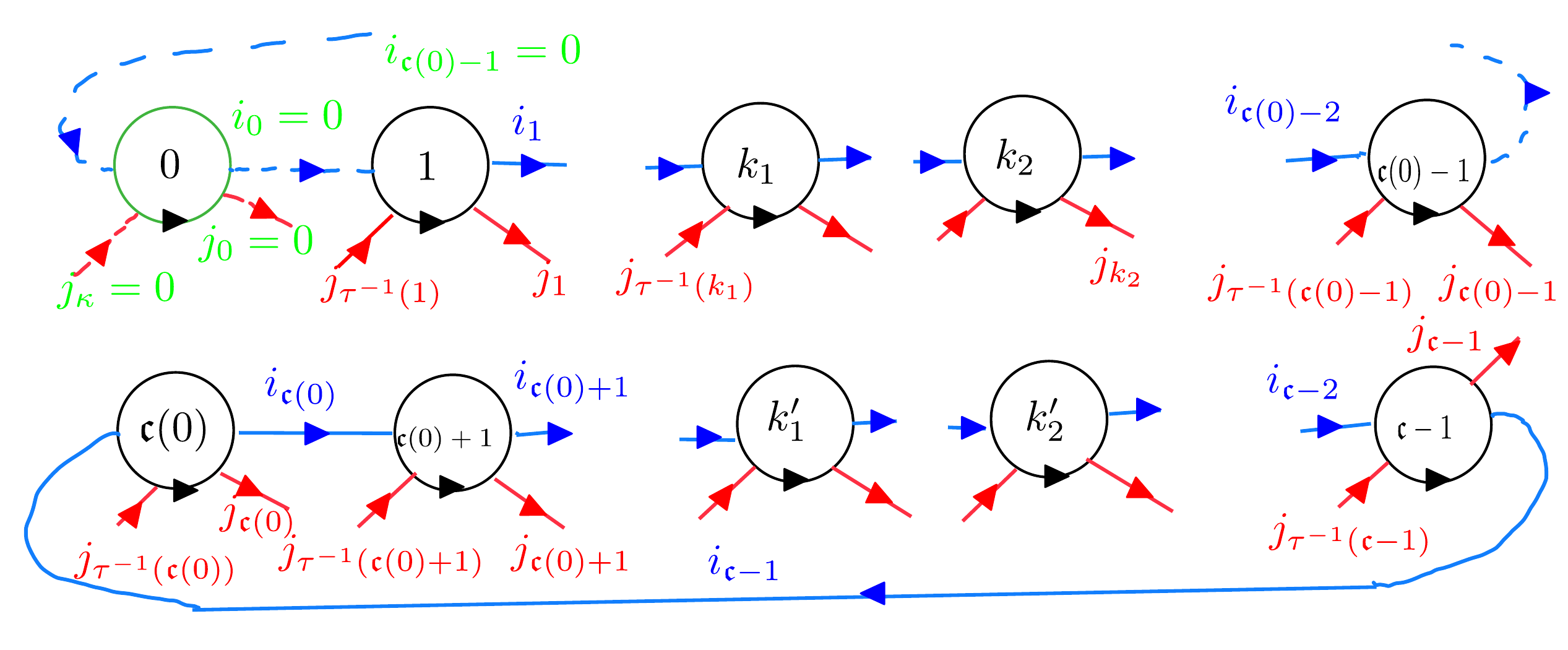}

\centerline{\textit{Figure 21}}

\vskip 0.1in

We have $i_0=i_{\mathfrak{c}(1)-1}=j_0=j_{\kappa }=0$ by the choice of braid. Furthermore we have flow equations \eqref{eq=local 1} for $k=0,\ldots \mathfrak{c}(1)-1$, and as before from the vertices of the first component, see equation (\eqref{eq=cyclic}):
$$\sum_{k=1}^{\mathfrak{c}(1)-1}j_{\tau^{-1}(k-1)}-j_k=0.$$

In the alternating $2$-component case this is not obvious, even though  $\tau $ is a bijection of $\underline{\mathfrak{c}}$. This is because when the first component overcrosses the second component at vertex $k$ then $\tau^{-1}(k)\in \mathfrak{c}(1)+\underline{\mathfrak{c}(2)}=\{\mathfrak{c}(1),\ldots ,\mathfrak{c}-1\}$, while at an undercrossing of the second component $\tau (k) \in \mathfrak{c}(1)+\underline{\mathfrak{c}(2)}$. But the above relation holds anyway because of the cycle condition. Thus $i_0=0$ still implies 
\begin{equation}\label{eq=cycle i}
i_k=\sum_{\ell =1}^kj_{\tau^{-1}(\ell )}-j_{\ell }
\end{equation}
for $k=1,\ldots \mathfrak{c}(1)-2$, and $i_{\mathfrak{c}(1)-1}=0$.
Finally the flow equations and base value at the vertices corresponding to the second component will give $i_{\mathfrak{c}(1)}=\beta_2$ and 
$$i_k=i_{\mathfrak{c}(1)}+\sum_{\ell =\mathfrak{c}(1)+1}^kj_{\tau^{-1}(\ell )}-j_{\ell }$$
for $k=\mathfrak{c}(1)+1,\ldots ,\mathfrak{c}-1$. Again, the cycle equation is necessary to show that the solution of the system of flow equations is consistent. Note that flow equation at the first vertex of the second component holds 
$$\beta_2=i_{\mathfrak{c}(1)}=i_{\mathfrak{c}-1}+j_{\tau^{-1}(\mathfrak{c}(0)}-j_{\mathfrak{c}(0)}$$
because of the cycle relation, compare the arguments following Definition \ref{defn=potential}, where we explain how a state is defined from the potential. Note that with $j_0=j_{\kappa }=0$ we reduce the red colors at arc-graph vertices corresponding to the first component to $\mathfrak{c}(1)-2$. Now we have a choice of $\beta_2$ and a cycle relation for the red colors at arc-graph vertices of the second component. Usually this will allow to eliminate of those red colors from the cycle relation and end up with a total of $\mathfrak{c}-2$ colorings determining a contributing state, of course with restrictions given by
$\mathbf{0}\leq \mathbf{i}+\mathbf{j}\leq \mathbf{n}$ as before. But if the two components split and have no crossings this is not the case, in this case we need $\mathfrak{c}-1$ colors. Also if one of the components is completely crossing over the other component the red colors for those crossings all have to be $0$. In this case we only need $\mathfrak{c}'-1=\mathfrak{c}'(1)
+\mathfrak{c}'(2)-1$ colors to determine a contributing state, where $\mathfrak{c}'(\ell )$ is the number of self-crossings of the $\ell $-th component for $\ell =1,2$. Note that the requirement $\mathbf{0}\leq \mathbf{j}\leq \mathbf{n}$ in combination with cycle conditions will often reduce the \textit{dimension} of the space of red colors giving contributing states. But e.\ g.\  the case of the Hopf link $\sigma_1^{-2}$ shows that the answer in general can be $\mathfrak{c}-1$. 

\begin{exmp} Let $\beta =\sigma_1^{-6}$ and $\mathfrak{s}=2$. Then $\mathcal{K}=\beta ^{\wedge }$ is a $2$-component link projection for a link with linking number $-3$. Note that $\omega (\mathcal{K})=-6$. The set of contributing states then can be described as the set of $(j_1,j_3,j_5,\beta )$ such that $0\leq j_3,\beta \leq n$ and 
$$\textrm{max}(0,j_3-\beta)\leq j_1\leq j_5\leq \textrm{min}(n+j_1-j_3,n-\beta )$$
It is interesting to count the contributing states for fixed $\beta =0,1,\ldots ,n$. For example when $\beta =n$ it follows that 
$j_1=j_5=0$ but $j_3$ can take any values $0,1,\ldots n$, so we have $n+1$ states. On the other hand when $\beta =0$ then the inequalities reduce to
$0\leq j_3\leq j_1\leq j_5\leq n$. So there are $\frac{1}{6}(n+1)(n+2)(n+3)$ states for $\beta =0$.
(Here we use the usual way to enumerate the overcrossing arcs). 

\begin{center}
\includegraphics[width=1.0\textwidth]{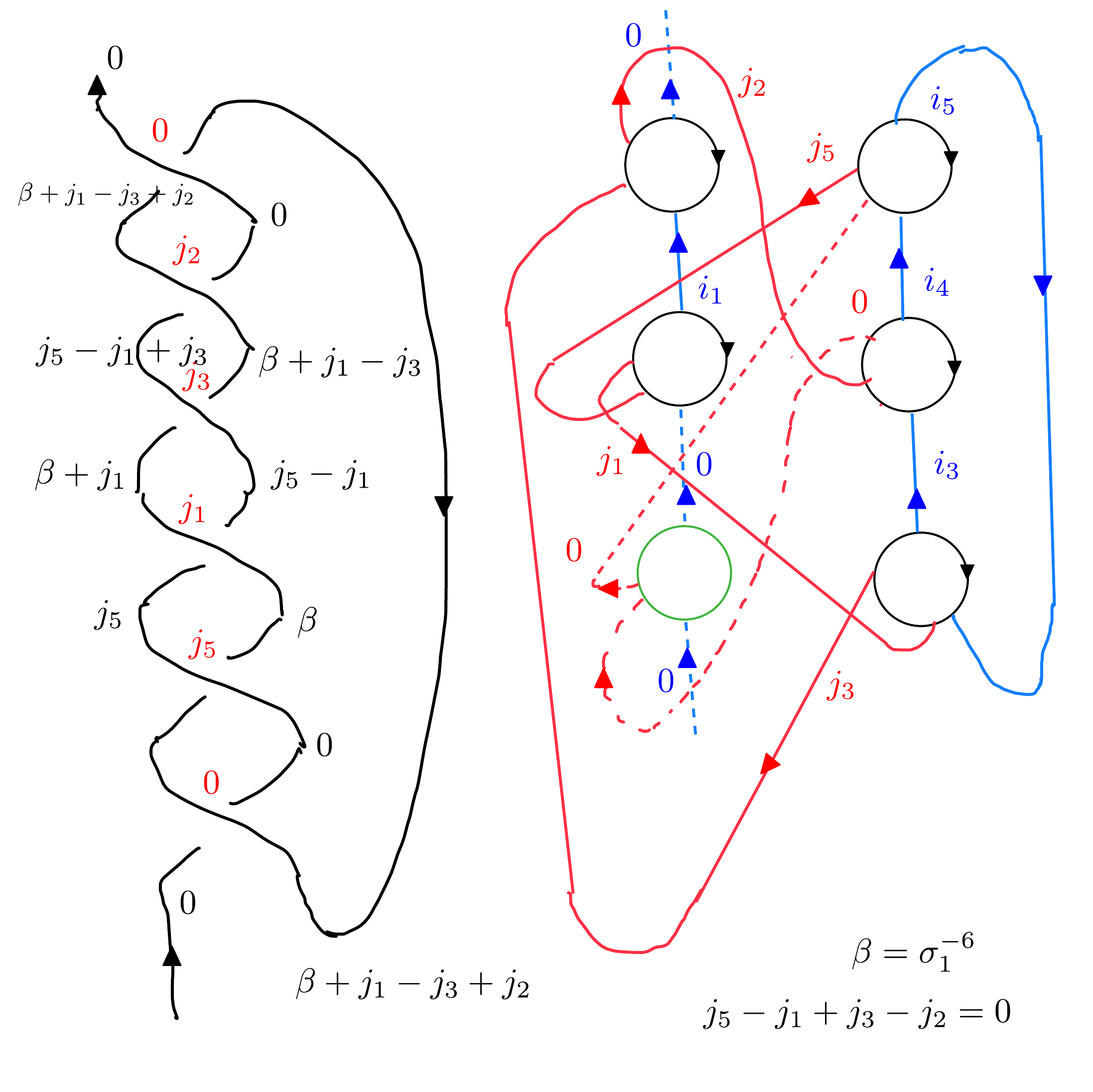}
\end{center}

\vskip -0.1in

\centerline{\textit{Figure 22}}

\end{exmp}

\section{Weaving links}

Recall our notation for $a$ a positive integer, $\underline{a}:=\{0,1,\ldots ,a-1\}$.

In this section we consider only braids with zero writhe and three strands. In this case the framed and unframed colored Jones polynomial coincide. The colored Jones polynomial will take values in $\mathbb{Z}[t^{\pm 1/2}]$, in fact in $\mathbb{Z}[t^{\pm 1}]$ in the case of knots or $n$ even. For $\mathfrak{s}=3$ we have:
$$\delta (\mathcal{K},n)=\delta '(\mathcal{K},n)=n.$$
and vertex contributions for states as in equation \eqref{eq=vertex contribution}.

We will consider those \textit{weaving links}, which are defined by the following alternating $3$-braid closures:
$$\left((\sigma_1^{-1}\sigma_2)^m\right)^{\wedge}=\overline{W(3,m)}.$$
For $m\equiv 1\ \textrm{mod}\ 3$ and $m\equiv 2\ \textrm{mod} \ 3$ the closure of the braid is a knot, for $m\equiv 0 \ \textrm{mod} \ 3$ the closure is a $3$-component link. Note that all links $K=W(3,m)$ are amphichiral and therefore $J_n(K)=J_n(\overline{K})$.
Note that for $m=3\ell +1$ the induced cycle permutation is $(132)$ respectively for $m=3\ell +2$ it is $(123)$.
In the case of weaving links the cyclic order of overcrossing arcs by $k\in \underline{2m-1}$ will be such that
$\varepsilon (k)=(-1)^{k+1}$. 

In the following we will focus on an explicit description of the jump maps $\tau $, rotation and excess,  because these are
the main ingredients to apply \ref{thm:GL2}. The braid projections are alternating and thus $\tau $ is a bijection. 
For small $\ell $ we can explicitly describe the recursion to determine $\mathbf{i}$ from $\mathbf{j}$, and the inequalities for the sets of contributing states. But in general, it seems better to just refer to the general formula \eqref{eq=contributing}.

\noindent \textbf{(a)} Let $\ell \geq 1$ and 
$$K=\left((\sigma_1^{-1}\sigma_2)^{3\ell +1}\right)^{\wedge}=\overline{W(3,3\ell +1)}$$
 with corresponding long component projection $\mathcal{K}$ with $6\ell +2$ crossings. 
 For $\ell =0$ we have $\sigma_1^{-1}\sigma_2$, which is braid representation of the unknot. 
 It is a nice exercise to check that the formula gives the polynomial $1$ in this case. 
 
We can work with the reduced arc graph and thus describe a contributing state (in our case $\tau^{-1}(0)=2\ell $). 
$$\mathbf{j}=(j_1,j_2,\ldots ,j_{6\ell +1}), j_{2\ell }=0$$
and
$$\mathbf{i}=(i_1,i_2,\ldots ,i_{6\ell })$$
We have for $k\in \underline{6\ell +1}$:
$$\tau (k)=k+4\ell +2\ \textrm{mod}\ (6\ell +2),$$
the modulus is defined in $\underline{6\ell +1}$, or $\tau^{-1}(k)=k+2\ell \ \textrm{mod} (6\ell +2)$.
Recall that we set $i_0=j_0=0$. The contributing states are described by a vector of length $6\ell $ with entries in $\underline{n}$ subject to 
$6\ell -1$ \textrm{double inequalities} for contributing states \eqref{eq=contributing}. But there is only one inequality for vertex $2\ell $, 
$i_{2\ell -1}+j_{4\ell }\leq n$.

\vskip 0.1in

\includegraphics[width=0.5\textwidth]{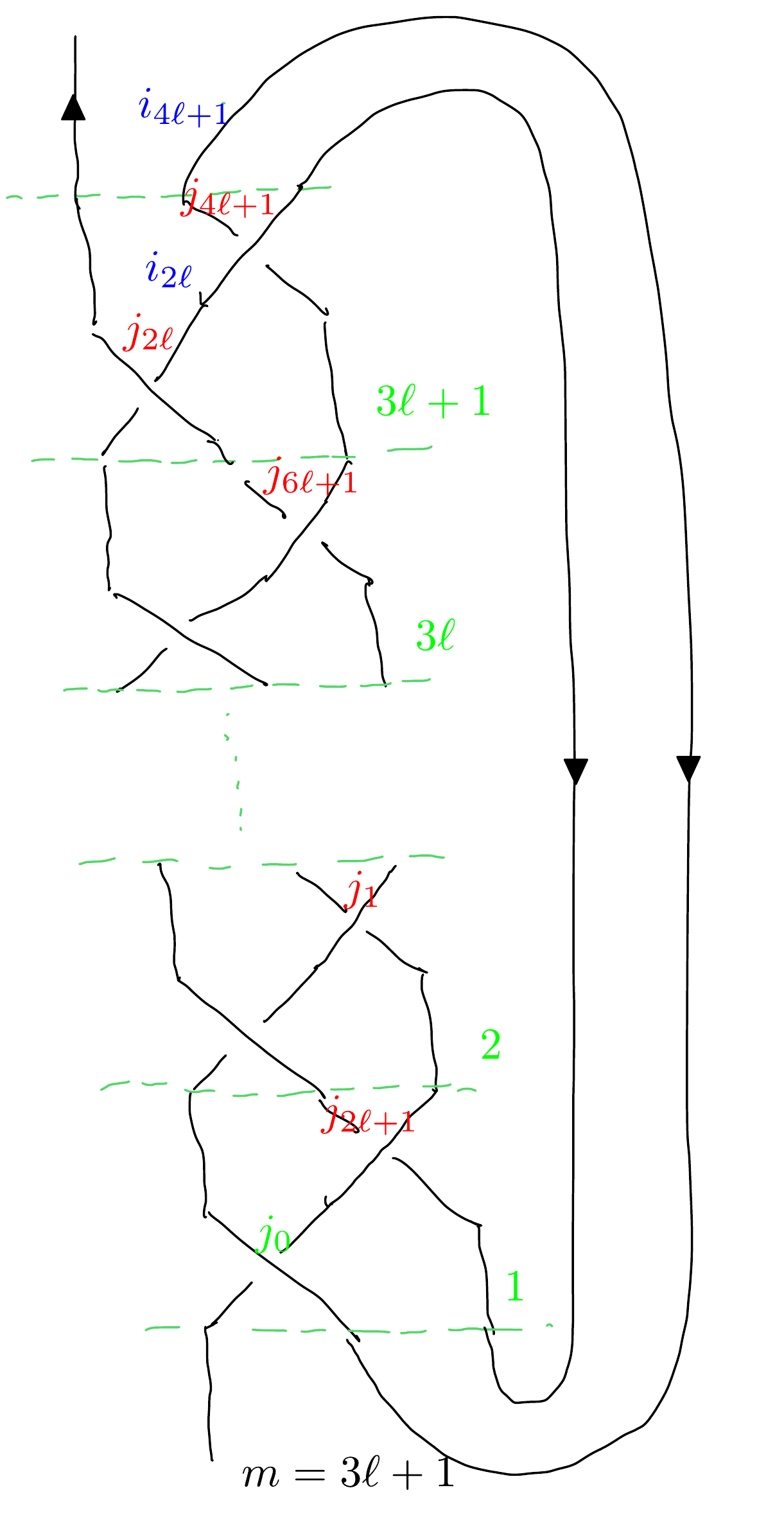}
\includegraphics[width=0.5\textwidth]{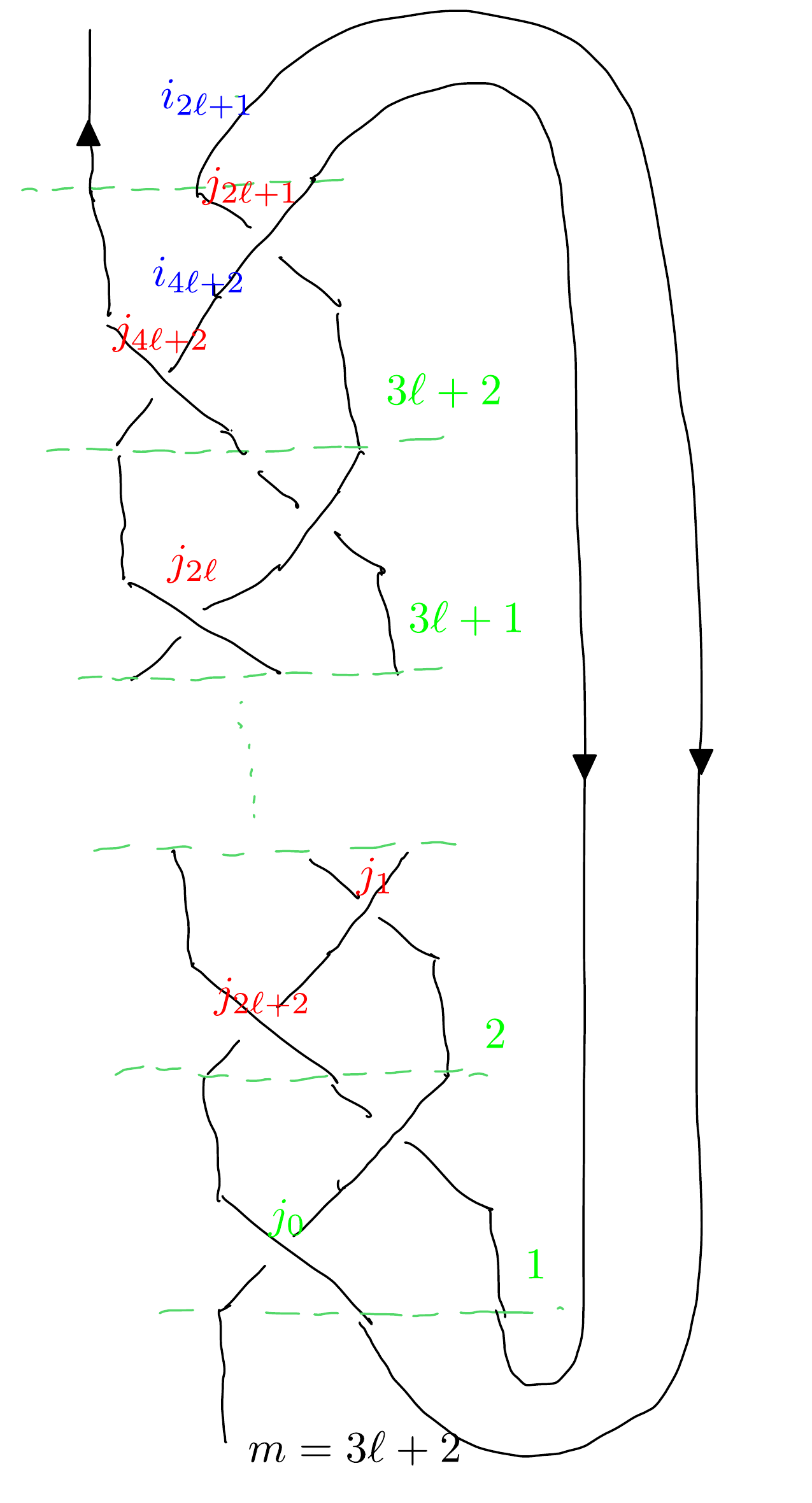}

\vskip -0.1in

\centerline{\textit{Figure 23}}

\vskip 0.1in

We have $\textrm{rot}(f)=(i_{2\ell }+j_{4\ell +1})+i_{4\ell +1}$ and 
$$\textrm{exc}(f)=-\sum_{k=1}^{6\ell +1}(-1)^{k+1}i_ki_{\tau (k)-1}.$$ 
See Figure 21 for the rotation and see the arc graph for the braid projection of $W(3,4)$ below in Figure 22.
Thus $\delta (\mathbf{j})=-(i_{2\ell }+j_{4\ell +1}+i_{4\ell +1})+\sum_{k=1}^{6\ell +1}(-1)^ki_ki_{\tau (k)-1}$.

\newpage

\begin{wrapfigure}{l}{7.5cm}
\includegraphics[width=7.5cm,height=11cm]{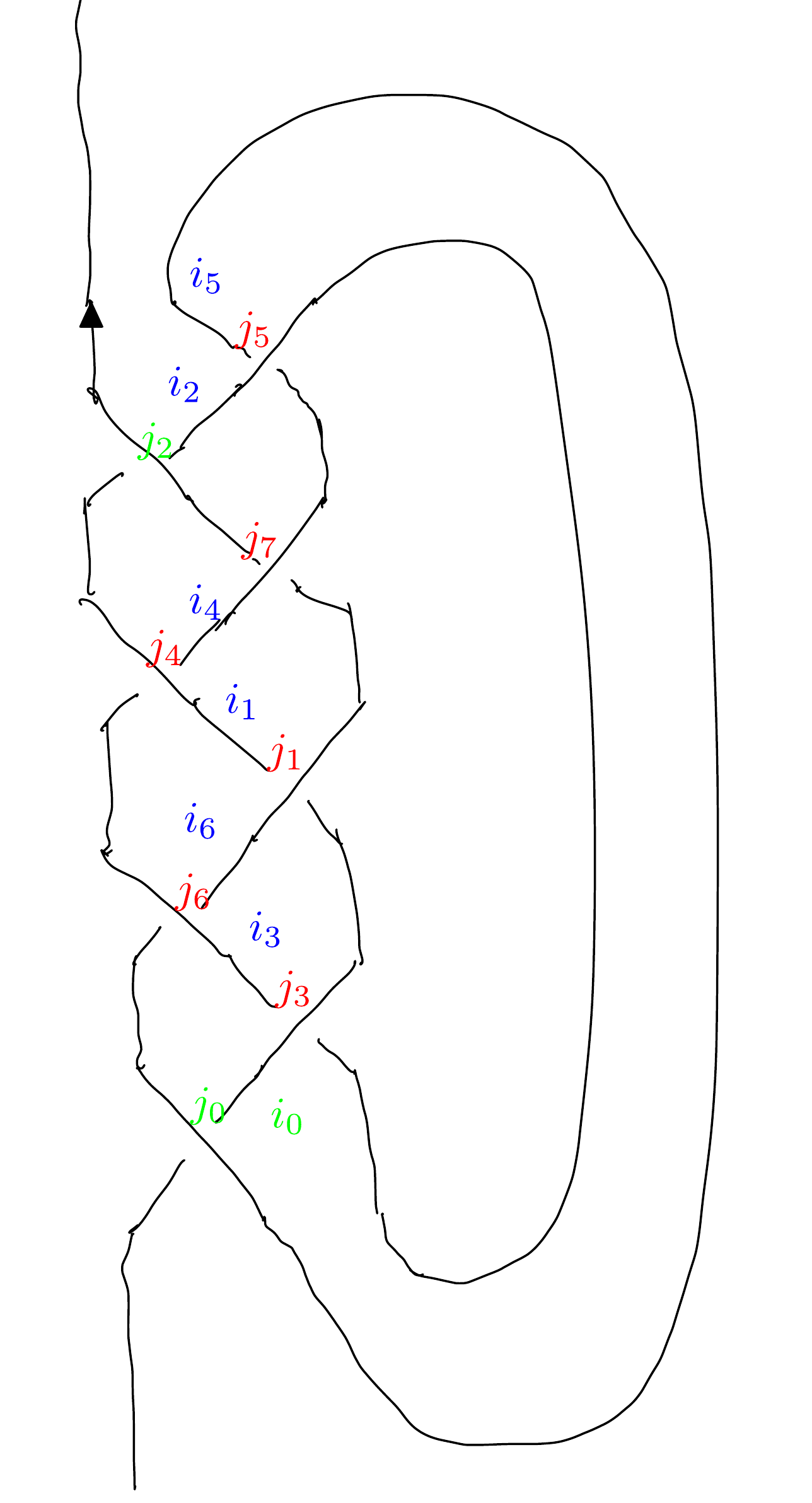}
\vskip -0.1in

\centerline{\textit{Figure 24}}

\vskip 0.1in
\end{wrapfigure}

For $\ell =1$ we are calculating the colored Jones polynomial of the weaving knot $W(3,4)$, which is the knot $8_{18}$. The figure shows the long braid representation of this knot and coloring according to \cite{G}.

Figure 21 shows the corresponding arc graph-graph according to \cite{GL} with colorings. Note that it is our specific choice of braid (starting with a negative braid generator and ending with a positive braid generator), which allows to discard the first overcrossing arc (which we label $0$). The system of equations restricting the values of the vector $\mathbf{j}$ by the non-negativity of the vector $\mathbf{i}$ and $n$ are now obvious from the figure. We would like to point out that working out the restriction to contributing states seems the most challenging piece of the computation. In the case of weaving links this can be reduced to periodicity according to the value of $\ell $. In general we will get a linear system of inequalities restricting the values to an intersection of half-spaces. For $\ell =1$ the system of inequalities can be reduced in the following way for the coefficients of the vector (see \eqref{eq=contributing}):
$$\mathbf{j}'=(j_1,j_3,j_4,j_5,j_6,j_7)\in (\underline{n+1})^6$$
\begin{align*}
j_1&\leq \textrm{min} \{j_3,j_4+j_5.j_5+j_6\}\\
\textrm{max} \{j_3+j_4,j_4+j_5,j_5+j_6\}&\leq n
\end{align*}
The main question seems to be to get a direct geometric understanding of those inequalities in terms of the geometry of the links. 

\begin{center}
\includegraphics[width=1.0\textwidth]{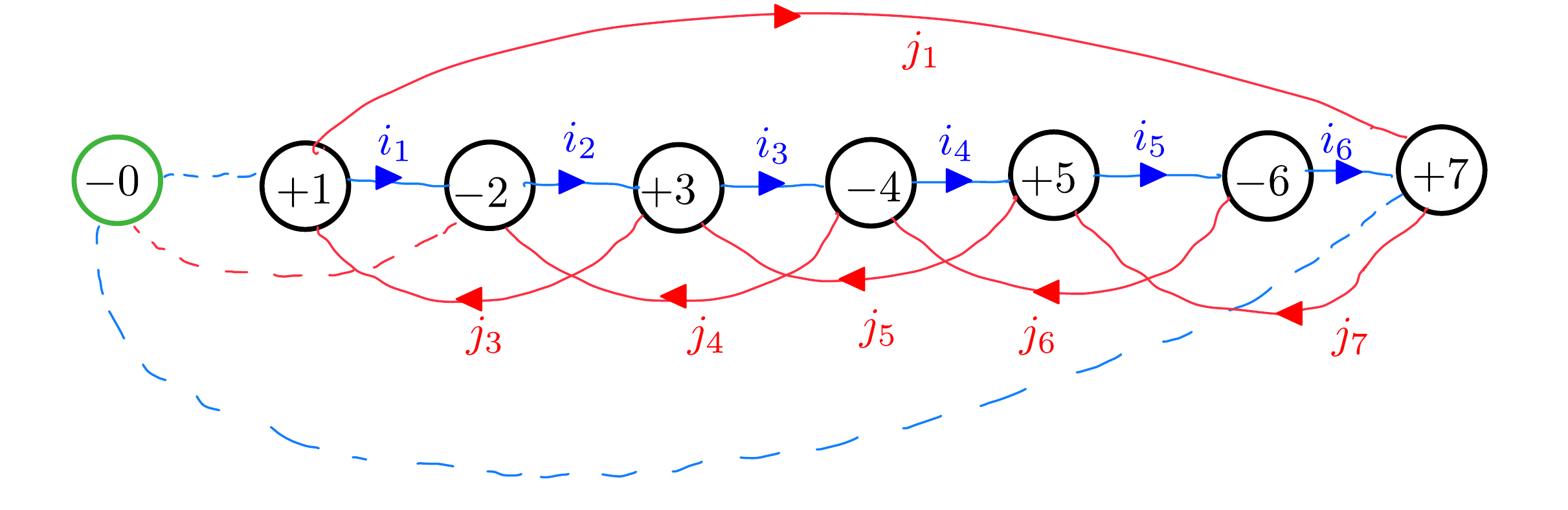}
\end{center}

\vskip -0.1in

\centerline{\textit{Figure 25}}

\vskip 0.1in

\noindent \textbf{(b)} Let $\ell \geq 0$ and consider the knots
$$K=\left((\sigma_1^{-1}\sigma_2)^{3\ell +2}\right)^{\wedge}=\overline{W(3,3\ell +2)}$$
 with corresponding braid projection $\mathcal{K}$. 
For $\ell =0$ we have $(\sigma_1^{-1}\sigma_2)^2$, which is the figure eight knot. The next knot in this series is $W(3,5)$, which is the knot $10_{123}$. The description of the set of contributing states is very similar to (a): 
$$\mathbf{j}=(j_1,\ldots ,j_{6\ell +3}) \ \textrm{with}\  j_{4\ell +2}=0$$
with $\mathbf{i}=(i_1,\ldots ,i_{6\ell +2})\in (\underline{n+1})^{6\ell +2}$ is determined by \eqref{eq=cycle i}
with 
$$\tau (k)=k+2\ell +2\ \textrm{mod}\ (6\ell +4)$$
and the inequalities determining the contributing states are given by \eqref{eq=contributing}.
Note that the braid word contains $6\ell +4$ braid generators. Taking into account $j_{4\ell +2 }=0$ we describe states by vectors 
with values in $\underline{n+1}$ of length $6\ell +2$.
We have 
$$\textrm{rot}(f)=(i_{4\ell +2}+j_{2\ell +1})+i_{2\ell +1}$$
and 
$$\textrm{exc}(f)=\sum_{k=1}^{6\ell +2}(-1)^ki_ki_{\tau (k)-1},$$ and thus 
$\delta (\mathbf{j})=-(i_{4\ell +2}+j_{2\ell +1}+i_{2\ell +1})+\sum_{k=1}^{6\ell +2}(-1)^ki_ki_{\tau (k)-1}$. 

\vskip 0.1in

\noindent \textbf{(c)} Now for $m=3\ell $ and $\ell \geq 1$ we consider the weaving links $W(3,3\ell )$ with $3$ components.
The corresponding braid word includes $6\ell $ braid generators. The resulting links are Brunnian, in fact $W(3,3)$ is the Borromean classical rings link. 

The non-negativity requirement for contributing states and basing of $0$ on the long strand immediately implies $j_0=j_{6\ell -2}=0$. The following two cycle relations have to hold for each coloring (note that the relation for the first component follows from the other two as we know):

\begin{align*} 
j_1+j_2+\cdots +j_{2\ell -1}=j_{2\ell}+j_{2\ell +1}+\cdots +j_{4\ell -2}+j_{4\ell -1}\\
j_{2\ell}+j_{2\ell +1}+\cdots +j_{4\ell -2}+j_{4\ell -1}=j_{4\ell }+j_{4\ell +1}+\cdots +j_{6\ell -1}
\end{align*}

We use the equations to eliminate $j_{2\ell }$ and $j_{4\ell }$ but will use them in subsequent inequalities:
$$j_{2\ell }=j_1+\cdots j_{2\ell -1}-(j_{2\ell +1}+\cdots +j_{4\ell -1})$$
respectively 
\begin{align*}
j_{4\ell }&=j_{2\ell }+\cdots j_{4\ell -1}-(j_{4\ell +1}+\cdots j_{6\ell -1})\\
&=j_1+\cdots j_{2\ell -1}-(j_{4l+1}+\cdots +j_{6\ell -3}, j_{6\ell -1})
\end{align*}

\newpage

\begin{wrapfigure}{l}{7.5cm}
\includegraphics[width=7.5cm,height=13cm]{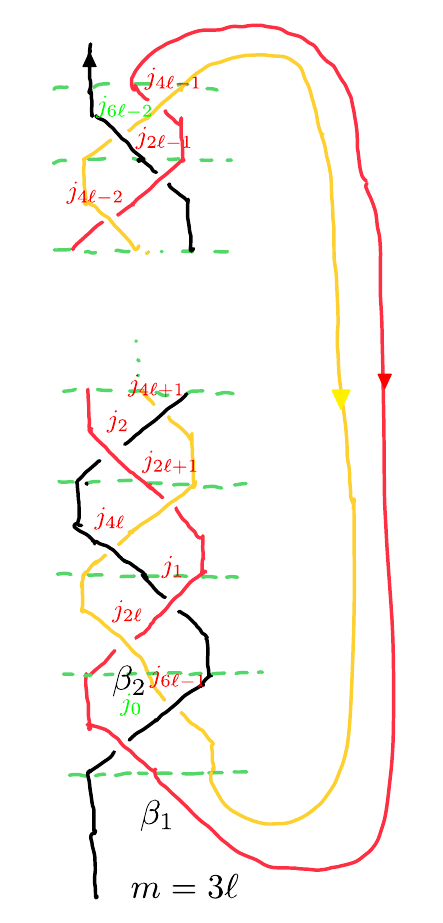}
\vskip -0.1in

\vskip 0.1in 

\centerline{\textit{Figure 26}}

\vskip 0.1in
\end{wrapfigure}

Just like in the knot cases we can determine the set of contributing states $\mathcal{F}_n^*(G\mathcal{K})$. Each contributing state is determined by a vector 
$$\mathbf{j}=(\mathbf{j}(1),\mathbf{j}(2),\mathbf{j}(3),\beta_1,\beta_2)$$
with components in $\underline{n+1}$. 
We $\mathfrak{c}(1)=\mathfrak{c}(2)=\mathfrak{c}(3)=2\ell $.
The vector $\mathbf{i}$ determined from the system of equations \eqref{eq=cycle i}. 

Using the base value on the first component and the two cycle relations above we can omit the first components in the vectors $\mathfrak{j}(\ell )$, 
and so have
\begin{align*}
\mathbf{j}(1)&=(j_1,\ldots ,j_{2\ell -1})\\
\mathbf{j}(2)&=(j_{2\ell +1},\ldots ,j_{4\ell -1})\\
\mathbf{j}(3)&=(j_{4\ell +1},\ldots ,j_{6\ell -3},j_{6\ell -1})
\end{align*}
We have $j_{6\ell -}=j_{\tau^{-1}(0)}=0$.

These vectors satisfy a system of inqualities see \eqref{eq=contributing} determining the contributing states,
in fact one system for each cycle.

Figure 27 shows the corresponding arc-graph.

 \ \newline
\ \newline
\ \newline

\begin{center}
\includegraphics[width=1.0\textwidth]{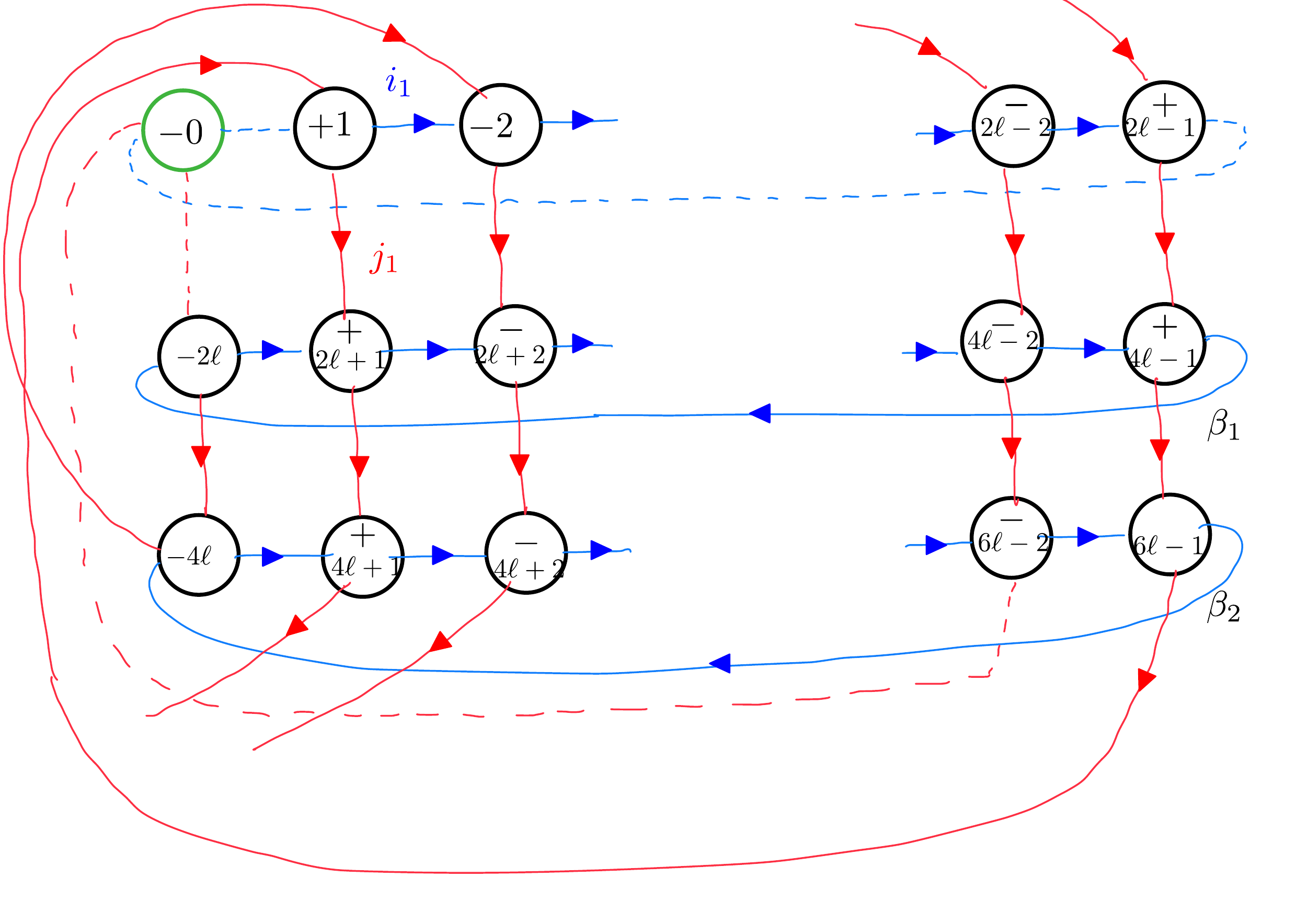}
\end{center}

\vskip -0.1in

\centerline{\textit{Figure 27}}

\vskip 0.1in

The jump map is given by:
$$
\tau (k)=\begin{cases}
k+2\ell \ \textrm{mod}\ 6\ell & \textrm{if} \ k=1,\ldots, 4\ell -1\\
k+2\ell +2\ \textrm{mod}\ 6\ell & \ \textrm{if} \ k=4\ell ,\ldots , 6\ell -1
\end{cases}
$$

\begin{rem}
The classical Jones polynomials $J(W(3,m))$ have also been recently computed by Mishra and Staffeldt in \cite{MS} using recursion in the Hecke algebra.
\end{rem}

\section{Vertex ordering by braids}

In the previous section we used symmetry and did appeal to the geometry of the projections in order to find the formulas for the jump map (and thereby colored Jones polynomial). 

\begin{defn} A \textit{braid word} for an $m$-braid is a \textit{reduced word}:
$$\beta =\sigma_{\eta_1}^{\epsilon_1}\sigma_{\eta_2}^{\epsilon_2}\cdots \sigma_{\eta_{\mathfrak{b} }}^{\epsilon_{\mathfrak{b}}}$$
representing an element in the free group generated generated by the braid generators 
 in the braid generators $\sigma_1,\ldots ,\sigma_{m-1}$.
\end{defn}

A braid word from left to right corresponds to a projection with generators placed from bottom to top.  
The length of the braid word $|\beta |:=|\epsilon_1|+\ldots +|\epsilon_{\mathfrak{b}}|$is well-defined, and
The number of crossings of the corresponding projection is $\mathfrak{c}=|\beta |$.  

In order to work with the reduced arc graph we assume that the first appearance of $\sigma_1$ in the braid word is with negative exponent. We will also assume that the projection has no overcrossing circles. 
Each braid word defines a natural ordering on the set of crossings, which we identify with numbers $0,1,\ldots ,\ldots ,\mathfrak{c}-1$ enumerated from the bottom. In the link case this set decomposes $V=V_1\sqcup \ldots \sqcup V_{\mu}$ according to the $\mu $ components of the link, where we assume $V_{\ell }$ consists of those crossings with the undercrossing arc belonging to the $\ell $-th component for $\ell =1,\ldots ,\mu $. We call the odering on $V$ induced from a braid the \textit{braid ordering}.

Using a cycle decomposition of $\sigma $, just as in section 3, a potential defines (see Definition \ref{defn=potential}) a coloring of $G\mathcal{K}$. In fact, if $\sigma $ defines a $\mathfrak{c}_{\ell }$-cycle on $V_{\ell }$ then we can use 
$$f(e_v^b)=f(e_{\sigma ^{-1}(v)}^b)+f(e_{\tau ^{-1}(v)}^r)-f(e_v^r).$$
and we will reach $f(e_{v_0}^b)$ for some $v_0\in V_{\ell }$ with value prescibed by the potential in finitely many recursion steps. Note that besides rotation and excess the state-sum in Theorem \ref{thm:main} is completely determined by $\sigma ,\tau $, with $\sigma $ a permutation and $\tau $ a mapping of $V\leftrightarrow \underline{\mathfrak{c}}$ 

Note that any two permutations with equivalent length cycle decompositions are conjugate. Thus, by finding $\rho $ such that
$\rho \sigma \rho^{-1}=\sigma_+$ where $\sigma_+$ is the standard shift map on each $V_{\ell }$ module the cycle length, 
we can return to set-up of section 8. If we define $\tau_+:=\rho \tau \rho^{-1}$ then this will be the jump map according to the standard shift map $\sigma_+$. The reason to use $\sigma ,\tau $ defined from the ordering induced from the braid is that 
it is much easier to determine those maps directly from the given braid word than it is to determine $\tau_+$.

We will finish by discussing the case of $3$-braid words:
$$\beta =\sigma_1^{\eta _1}\sigma_2^{\delta_1}\cdots \sigma_1^{\eta_r}\sigma_2^{\delta_r}$$
with $\eta_k, \delta_{\ell}\in \mathbb{Z}\setminus \{0\}$ for $k=1,\ldots r, \ell =1,\ldots r-1$. First note that it is easy to determine the number of components using the map from the free group on $\sigma_1,\sigma_2$ to the group $G=\langle \sigma_1,\sigma_2:\sigma_1^2=\sigma_2^2=1\rangle $. This will result in \textit{contraction elements} $\sigma_1\sigma_2\sigma_1\cdots \in G$ respectively $\sigma_2\sigma_1\sigma_2\cdots \in G$. If the length in $G$ is $\equiv 0(\textrm{mod}\ 6)$ then 
$K=\beta ^{\wedge}$ has three components. If the length is $\equiv 2,4(\textrm{mod}\ 6)$ then $K$ is a knot, and if the length is $\equiv 1,3,5(\textrm{mod}\ 6)$ then $K$ is a $2$-component link. This follows by mapping $\sigma_1$ to $(1,2)$ and $\sigma_2$ to $(2,3)$. In fact, the contraction to $G$ gives more precise information in telling what permutation underlies the braid. Note that using the obvious homomorphism into $\Sigma_3=\langle \sigma_1,\sigma_2:\sigma_1^2=\sigma_2^2=(\sigma_1\sigma_2)^3=(\sigma_2\sigma_1)^3=1\rangle $ it follows that 
$\sigma_1\sigma_2, (\sigma_2\sigma_1)^2$ map to $(132)$, $\sigma_2\sigma_1, (\sigma_1\sigma_2)^2$ map to 
$(123)$, $\sigma_1, (\sigma_2\sigma_1)^2\sigma_2$ maps to $(1,2)$, $\sigma_1\sigma_2\sigma_1, \sigma_2\sigma_1\sigma_2$ map to $(1,3)$ and finally $\sigma_2, (\sigma_1\sigma_2)^2\sigma_1$ map to $(2,3)$.

\vskip 0.2in

In order to find the \textbf{crossing map} $\sigma $ we have to distinguish four cases, depending on whether we calculate $\sigma (i)$ for $i$ corresponding to an overcrossing arc ending at elementary braids $\sigma_i^{\pm 1}$, $i=1,2$. 
In the following four cases we will in each case show all possible segments of the braid word following the element, with the last element in each case giving the next vertex at which the overcrossing arc starting at $i$ ends. Thus $|\sigma (i)-i|$ in each case is just the length of the segment, which is $\leq \mathfrak{c}-1$. Of course $\sigma (i)<i$ is possible if the overcrossing arc starting at crossing $i$ contains a braid closure arc. 

\vskip 0.1in

\noindent \textbf{Case 1:} $\sigma_1$: \textbf{(i)} is followed $\sigma_1$: $\sigma_1^2$, $\sigma_1\sigma_2^{-1}$, $\sigma_1\sigma_2^2$, $\sigma_1\sigma_2\sigma_1^{\alpha }\sigma_2$, $\sigma_1\sigma_2\sigma_1^{\alpha }\sigma_2^{-2}$, 
$\sigma_1\sigma_2\sigma_1^{\alpha }\sigma_2^{-1}\sigma_1$, 
$\sigma_1\sigma_2\sigma_1^{\alpha }\sigma_2^{-1}\sigma_1^{-2}$, $\sigma_1\sigma_2\sigma_1^{\alpha }\sigma_2^{-1}\sigma_1^{-1}\sigma_2^{\beta }\sigma_1^{-1}$, \\ $\sigma_1\sigma_2\sigma_1^{\alpha }\sigma_2^{-1}\sigma_1^{-1}\sigma_2^{\beta }\sigma_1^2$,  $\sigma_1[\sigma_2\sigma_1^{\alpha }\sigma_2^{-1}\sigma_1^{-1}\sigma_2^{\beta }\sigma_1]\ldots $, at this point the possibilities prescribed before are repeated 
by inserting the bracketed segment between $\sigma_1$ starting with the second possibility. Here $\alpha ,\beta \in \mathbb{Z}\setminus \{0\}$ can be any numbers, and in the repetition of the segment $\alpha ,\beta $ can be replaced 
by different numbers $\alpha '\beta '$ etc.\ depending on the braid sequence. The sequence will only stop if there is a 
break in the multiplication by conjugates, or we return to the starting braid $\sigma_1$ in the case that the overcrossing arc we describe starts and ends at the same vertex. The structure emerges from \textit{meandering} of the overcrossing arc between string $1$ and $3$ of the braid. A product $\sigma_1\sigma_2$ not followed by an additional $\sigma_1 $ will run the overcrossing arc from string $1$ to string 3 where it bypasses $\sigma_1^{\alpha }$. Then similarly the $\sigma_2^{-1}\sigma_1^{-1}$, if not followed by an additional $\sigma_1^{-1}$, will run the overcrossing arc from string 1 to string $3$. Note that if $\sigma_1$ is in braid position $i\in \{0,1,\ldots ,\mathfrak{c}-1\}$ then $\sigma (i)\textrm{mod} (\mathfrak{c})\in \{0,1,\ldots ,\mathfrak{c}-1\}$ is given by adding to $i$ the numbers
$2,2,3,3+\alpha,4+\alpha,5+\alpha,5+\alpha +\beta, 6+\alpha+\beta, 5+\alpha+\beta +\textrm{corresponding}$. 

\noindent \textbf{(ii)} if not followed by $\sigma_1$: $\sigma_2^{\beta }$, $\sigma_2^{\beta }\sigma_1^{-1}$, $\sigma_2^{\beta }\sigma_1^2$, 
$\sigma_2^{\beta }\sigma_1\ldots $. The same meandering occurs starting the point $\ldots $. 

\noindent \textbf{Case 2:} $\sigma_1^{-1}$: \textbf{(i)} if followed by $\sigma_1^{-1}$: this will be $\sigma_1^{-1}$ followed by the sequences in \textbf{Case 1} \textbf{(ii)}. 

\noindent \textbf{(ii)} if not followed by $\sigma_1^{-1}$: This will be exactly the sequence in \textbf{Case 1} \textbf{(i)}, starting with the second segment in the sequence and $\sigma_1$ deleted from each element. 

\vskip 0.1in

\noindent \textbf{Case 3:} $\sigma_2$: \textbf{(i)} if followed by $\sigma_2$: this corresponds to \textbf{Case 2} \textbf{(i)}, with interchanging $\sigma_1^{\pm 1}$ and $\sigma_2^{\mp 1}$ at all instances.

\noindent \textbf{(ii)} if not followed by $\sigma_2$: this corresponds to \textbf{Case 2} \textbf{(ii)}, with interchanging $\sigma_1^{\pm 1}$ by $\sigma_2^{\mp 1}$ at all instances.

\vskip 0.1in

\noindent \textbf{Case 4:} $\sigma_2^{-1}$: \textbf{(i)} if followed by $\sigma_2^{-1}$: this corresponds to \textbf{Case 1} \textbf{(i)} with $\sigma_1^{\pm 1}$ replaced by $\sigma_2^{\mp 1}$ at all instances.  

\vskip 0.2in

The second important ingredient in order to apply \ref{thm:main} is the \textbf{jump map} $\tau $. The point of using the vertex ordering defined from the braid is mainly that it is easier to determine $\tau $ from the braid word. The hard work is contained in the discussion of $\sigma $ above. Again we assume that the braids in the following cases are at position $i$. We consider the case of $\sigma_1$ in position $i$, the other cases are similar. If $\sigma_1$ is also in position $i+1$ then $\tau (i)=i+1$. If $\sigma_1$ is in position $i-1$ then $\tau (i)=\sigma (i-1)$ where the $\sigma_1$ in position $i-1$ is in \textbf{Case 1} \textbf{(i)} above. If the $\sigma_1$ in position $i$ is \textit{isolated} so precedded and followed by $\sigma_1^{\pm 1}$ then we replace $\sigma_1$ by $\sigma_1^{-1}$ to get a braid $\beta '$, and find $\sigma (i)$ for $\beta '$ using the above process for $\sigma $. If the answer would be $i$ again then the arc at $i$ would be on an overcrossing circle, which we excluded. 

\begin{rem} (a) Of course the description of the map $\sigma (i)$ can also be used to label the overcrossing arcs in sequence 
as discussed in section 7. We call this cycle order. In fact, if overcrossing arc $k$ ends at the crossing labelled $i$ according to the braid then overcrossing arc $k+1$ in cycle order ends at vertex $\sigma (i)$. 

\noindent (b) The vertex ordering defined from the braid presentation and the corresponding two maps $\sigma ,\tau $ are closely related to the noncommutative state models discussed in \cite{A}, \cite{GL}, \cite{H} and \cite{B}. 
\end{rem}

\begin{exmp} We will conclude with an example: $\beta =\sigma_1^{-3}\sigma_2\sigma_1\sigma_2^2\sigma_1^{-1}$. The contracted sequence is $\sigma_1\sigma_2$ and thus the closure is a knot and the braid permutation is  $(132)$, $\mathfrak{c}=9$ and so the vertex set is identified with $\{0,1,\ldots ,7\}$. We have vectors $\mathbf{j},\mathbf{i}$ as before, the difference now is just that the component number corresponds to the ordering induced from the braid. 

\begin{center}
\includegraphics[width=1.0\textwidth]{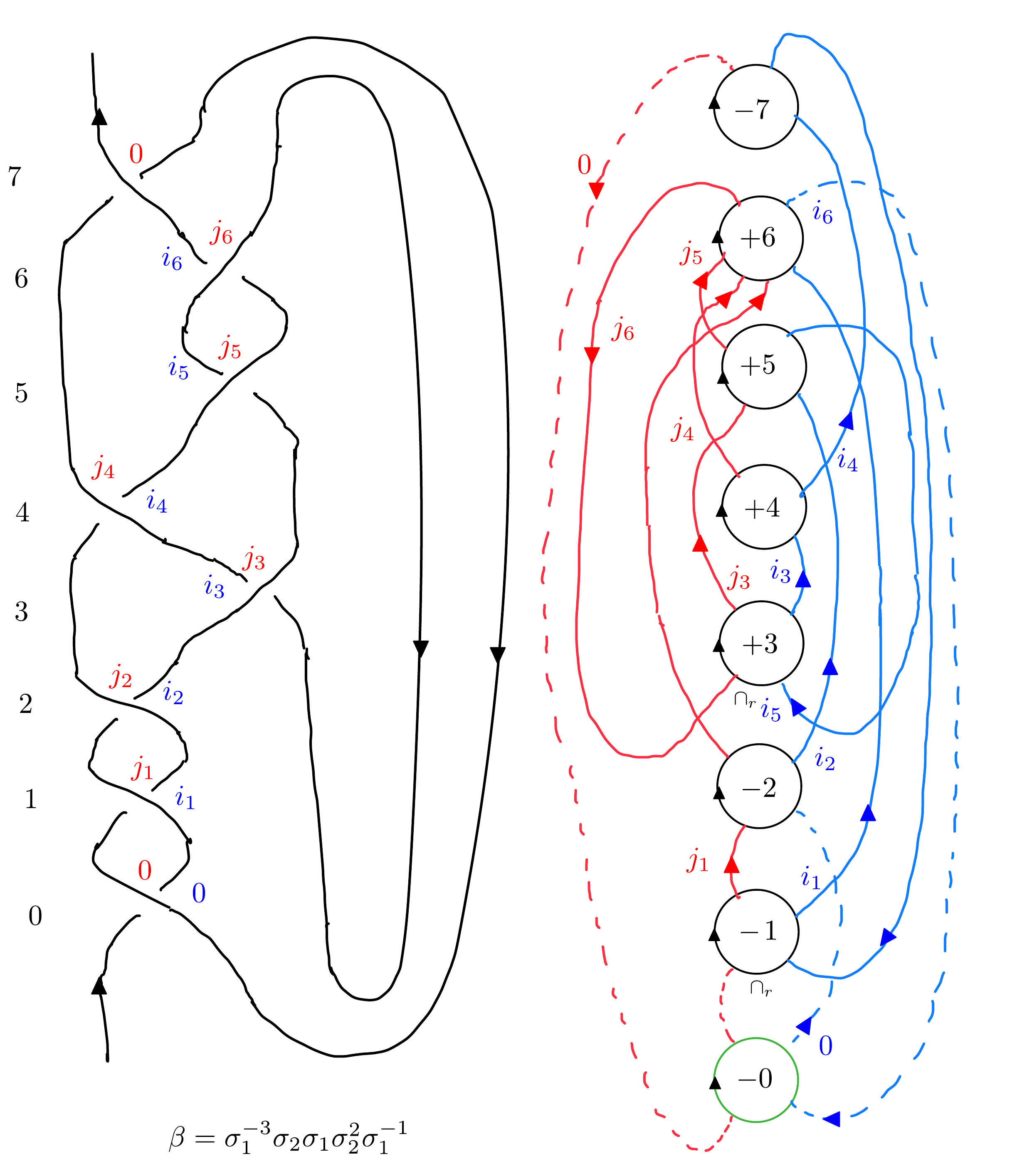}
\end{center}

\vskip -0.1in

\centerline{\textit{Figure 28}}

\vskip 0.1in 

\newpage

The following table is easily read off the part arc-graph. 

\begin{center}
\begin{tabular}{|l|l|l|}
\hline
number & $\sigma $ & $\tau $ \\
\hline
0 & 2 & 1\\
1 & 6 & 2\\
2 & 5 & 6\\
3 & 4 & 5\\
4 & 7 & 6\\
5 & 3 & 6\\
6 & 0 & 3\\
7 & 1 & 0\\
\hline
\end{tabular}
\end{center}

The computation of $\mathbf{i}=(i_1,i_2,i_3,i_4,i_5,i_6,i_7)$ from the vector of jumps $\mathbf{j}=(j_1,j_2,j_3,j_4,j_5,j_6,j_7)$ is recursively in the order 
$(0,2,5,3,4,7,1)$ since $\sigma (0)=2,\sigma(2)=5,\ldots $. Let $\mathbf{j}'=(j_1,j_2,j_3,j_4,j_5,j_6)$ and $\mathbf{i}':=(i_1,i_2,i_3,i_4,i_5,i_7)$ (we know that $i_0=j_0=i_6=j_7=0$ in the reduced description). 
We show explicitly the computation from the flow conditions on the arc graph:

\vskip 0.1in

$\sigma (0)=2, \tau (1)=2$, so $i_2+j_2=i_0+j_1$, or $i_2=j_1-j_2$

$\sigma (2)=5, \tau (3)=5$, so $i_5+j_5=i_2+j_3$, or $i_5=j_1-j_2+j_3-j_5$ 

$\sigma (5)=3, \tau (6)=3$, so $i_3+j_3=i_5+j_6$, or $i_3=j_1-j_2-j_5+j_6$

$\sigma (3)=4$, $\tau^{-1}(4)=\emptyset $, so $i_4+j_4=i_3$, or $i_4=j_1-j_2-j_5-j_4+j_6$

$\sigma (4)=7$, $\tau^{-1}(7)=\emptyset $, so $i_7+j_7=i_4$, or $i_7=j_1-j_2-j_5-j_4+j_6$

$\sigma (7)=1, \tau (0)=1$, so $i_1+j_1=i_7+j_0$, or $i_1=j_6-j_2-j_5-j_4$

\vskip -0.3in

\begin{align*}
\sigma (1)=6, \tau (2)=\tau (4)=\tau (5)=6, & \ \textrm{so} \ i_6+j_6=i_1+j_2+j_4+j_5, \\
& \textrm{or} \ i_6=0
\end{align*}

Note that $i_6=0$ is required by the color of the top left string to be $0$. So we get 
\begin{align*}
\mathbf{i}'=&(-j_2-j_4-j_5+j_6,j_1-j_2,j_1-j_2-j_5+j_6,j_1-j_2-j_4-j_5+j_6,\\
\ & j_1-j_2+j_3-j_5,j_1-j_2-j_4-j_5+j_6)
\end{align*}

The rotation of a state is given by the part-arc colors at the maxima as usual, and thus are:
$$\textrm{rot}(\mathbf{j}')=(i_3+j_4)+(i_5+j_6)=2(j_1-j_2-j_5+j_6)+j_3+j_4$$

\end{exmp}

\vskip 0.2in

We conclude with a list of questions we developed during this work. 

\vskip 0.1in

\section{Questions and projects}

\noindent \textbf{(1)} The colored Jones polynomials of alternating links has been studied in \cite{G1} using so called \textit{centered} states. It should be interesting to translate this work into the state model considered here.  

\noindent \textbf{(2)} Use the flow lemma to derive state-sum formulas for $\textrm{sl}(n,\mathbb{C})$-invariants, and for colored Jones polynomials of links with components colored by distinct representations. 

\noindent \textbf{(3)} Translate the state models for general $\mathfrak{g}$-quantum invariants based on the $R$-matrix into state models on arc graphs.

\noindent \textbf{(4)} Show that the noncommutative state models in \cite{GL} and \cite{HL} are equivalent, and dominate the state models discussed in this article. Extend the noncommutative state models to links. 

\noindent \textbf{(5)} Prove Armond's theorem on colored Jones ploynomials of positive knots using the state models discussed in this article. 

\noindent \textbf{(6)} How can the the arc graph be augmented so that the Khovanov homology of the colored links can be computed from it. This seems to be an open question even for $n=1$.

\noindent \textbf{(7)} Study how the geometry of return loops in the part arc graph is reflected in the state-sum for the colored Jones polynomial. 

\noindent \textbf{(8)} In \cite{KM} we reduce the set of states to compute the colored Jones polynomial of $W(3,4)$ to $10$ variables, see also \cite{A}, while the above calculation only requires $6$ variables. We conjecture that the noncommutative state model of \cite{HL} and \cite{A} dominates the state model of \cite{GL}.

\noindent \textbf{(8)} Work out the rules to find $\sigma ,\tau $ from the braid word for the $m$-string braid words. 

\begin{small}

\end{small}
\end{document}